\theoremstyle{plain}
\newtheorem{thm}{Theorem}
\newtheorem{lem}{Lemma}
\newtheorem{Bob}{Definition}
\newtheorem{prop}{Proposition}
\theoremstyle{definition}
\newtheorem{rem}{Remark}
\newtheorem{ques}{Question}
\newcommand{\Leb}{\ensuremath{\lambda}}
\newcommand{\LebP}{\ensuremath{\bold{m}_{Q}}}
\newcommand{\measure}{\ensuremath{\bold{m}_{\mathfrak{R}}}}
\newcommand{\LS}{\ensuremath{\underset{n=1}{\overset{\infty}{\cap}} \, {\underset{i=n}{\overset{\infty}{\cup}}}\,}}
\newcommand{\CLS}{\ensuremath{\underset{n=1}{\overset{\infty}{\cap}} \, \underset{t\geq n}{\cup}\,}}
\newcommand{\LebF}{\ensuremath{\nu_{Q}}}
\newtheorem{cor}{Corollary}
\begin{document}
\title{Shrinking targets for IETs: extending a theorem of Kurzweil}\
\author[J.\ Chaika]{Jon Chaika}

\address{Department of Mathematics, Rice University, Houston, TX~77005, USA}

\email{Jonathan.M.Chaika@rice.edu}
\maketitle

\begin{abstract} Given an IET $T:[0,1) \to [0,1)$ and decreasing sequence of positive real numbers with divergent sum $\textbf{a}= \{a_i\}_{i=1}^{\infty}$ we consider
$$S_T(\textbf{a})=\{(x,y)\in[0,1) \times[0,1): y \in B(T^ix,a_i) \text{ for infinitely many }i\} $$ where 
 $B(x,r)$ is the ball of radius $r$ about $x$. We prove that
\begin{enumerate}
\item for any fixed $\bf a$ for almost every IET $T$ the set  $S_T(\bf{a})$ has full Lebesgue measure.
\item For almost every IET there exists $\bf a$ such that $S_T(\bf a)$ has zero Lebesgue measure.
\item If one restricts to non-increasing sequences of positive real numbers with divergent sum which have the additional property that $ia_i$ is non-increasing then for almost every IET $T$ the set $S_T(\bf{a})$ has full Lebesgue measure for all such sequences.
\end{enumerate}
We prove related results for geodesic flows on translation surfaces and stronger results which treat the measure of every horizontal and vertical line of $S_T(\bf a)$.
\end{abstract}

\vspace{3 mm}

Let $(X, d)$ be a compact metric space and $T \colon X \to X$ be a $\mu$-ergodic map where $\mu$ is a finite Borel measure. It follows from the Birkhoff Ergodic Theorem that $\mu(\LS B(T^i x, \epsilon))=\mu(X)$ for every $\epsilon>0$ and $\mu$-almost every $x$. This means that $\mu$-almost every point $x$ lands in the ball of radius $\epsilon$ centered about $\mu$-almost every point $y$ infinitely often. This is equivalent to stating $\underset{i \to \infty}{\liminf} \, d(T^ix,y)=0$ for $\mu \times \mu$-almost every $(x,y)$. The shrinking target problem seeks to establish quantitative analogues of this; that is, let $\textbf{a}=\{a_i\}_{i=1}^{\infty}$ be a decreasing sequence of positive numbers,  is $\mu(\LS B(T^i x, a_i))=\mu(X)$ for  $\mu$-almost every $x$? The Borel-Cantelli Theorem provides a necessary condition ($\underset{i=1}{\overset{\infty}{\sum}} \mu(B(T^ix,a_i))=\infty $) and therefore shrinking target theorems often take the form of partial converses to the Borel-Cantelli Theorem. Let us recall the Borel-Cantelli Theorem.
\begin{thm} (Borel-Cantelli) Let $\mu$ be a probability measure and $\{A_i\}_{i=1}^{\infty}$ be a sequence of $\mu$-measurable sets.
\begin{enumerate}
\item If $\underset{i=1}{\overset{\infty}{\sum}} \mu(A_i)<\infty$ then $\mu(\LS A_i)=0.$
\item If $\mu(A_i\cap A_j)=\mu(A_i)\mu(A_j)$ for all $i\neq j$ then $\underset{i=1}{\overset{\infty}{\sum}} \mu(A_i)=\infty$ implies
$\mu(\LS A_i)=1$.
\end{enumerate}
\end{thm}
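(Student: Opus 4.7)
The plan for part (1) is the classical direct approach: since $\LS A_i \subseteq \cup_{i\geq n} A_i$ for every $n$, monotonicity and countable subadditivity give $\mu(\LS A_i) \leq \sum_{i\geq n} \mu(A_i)$, and the right-hand side is the tail of a convergent series, hence tends to zero. There is no obstacle.

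For part (2), my plan is the second moment method. I would introduce the partial counting functions $S_N := \sum_{i=1}^{N} \mathbf{1}_{A_i}$, with means $m_N := \sum_{i=1}^{N} \mu(A_i)$, which diverge to $\infty$ by assumption. The crucial step --- and the only one that uses the hypothesis --- is that pairwise independence kills every off-diagonal contribution to the variance: expanding $\mathrm{Var}(S_N)$ and applying $\mu(A_i\cap A_j) = \mu(A_i)\mu(A_j)$ for $i\neq j$ leaves only the diagonal, yielding $\mathrm{Var}(S_N) = \sum_{i=1}^N (\mu(A_i) - \mu(A_i)^2) \leq m_N$.

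Chebyshev's inequality then gives $\mu(S_N \leq m_N/2) \leq 4\,\mathrm{Var}(S_N)/m_N^2 \leq 4/m_N \to 0$ as $N\to\infty$. Since $(S_N)$ is nondecreasing, its pointwise limit equals $+\infty$ off a $\mu$-null set; as that limit set is precisely $\LS A_i$, we conclude $\mu(\LS A_i)=1$. I expect no genuine obstacle --- both parts are classical --- but the point worth flagging is that pairwise (rather than full) independence is exactly the minimal hypothesis that makes the variance computation go through, and this is what will allow the theorem to be applied later in settings where only two-point correlations are controlled.
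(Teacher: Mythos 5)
Your proof is correct: part (1) is the standard tail-of-a-convergent-series argument, and part (2) is the classical second-moment (Chebyshev) argument, where pairwise independence is exactly what kills the off-diagonal covariance terms; the passage from $\mu(S_N\le m_N/2)\to 0$ to $\mu(\LS A_i)=1$ via monotonicity of $S_N$ is also fine. Note that the paper itself gives no proof of this statement --- it is recalled as the classical Borel--Cantelli theorem (the pairwise-independent form due to Erd\H{o}s and R\'enyi) --- so there is nothing in the text to compare your argument against; yours is the standard proof of that classical result.
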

The converse to the first part of the Borel-Cantelli Theorem is false, but the second part tells us that the converse holds when the sets are independent. In this way shrinking target properties are partial converses; they provide that divergence of the measures of the sets considered implies that the set limsup has full measure under additional assumptions on the sets.

 Given $(X,T)$, a sequence of measurable sets $A_1,A_2,... \subset X$ is said to be \emph{Borel-Cantelli}
  if $\mu$-almost every $x$ satisfies $T^ix \in  A_i$ for infinitely many $i$.
  (Equivalently, $\LS T^{-i}(A_i)$ has full $\mu$ measure.)
  $(X,d,T)$ is said to satisfy the \emph{Monotone Shrinking Target Property} (MSTP)
  if the sequence of measurable sets given by ${A_i=B(y,a_i)}$ is Borel-Cantelli for any $y$,
   and any decreasing sequence of positive numbers $\{a_i\}_{i=1}^{\infty}$~ with
   $\underset{i=1}{\overset{\infty}{\sum}} \mu(B(y,a_i))= \infty$.
   We refer the reader interested in the Monotone Shrinking Target Property to the survey \cite{ja}
    and the accessible paper \cite{fayad}, which reproves Kurzweil's result that rotations by badly approximable
     numbers are exactly the rotations satisfying MSTP and also provides the first example of a mixing system that does not satisfy MSTP. 

In the 1950's 
 J. Kurzweil  established a result for rotations
  which largely motivates this paper. Define $R_{\alpha} \colon [0,1) \to[0,1)$ to be $R_{\alpha}(x)=x+\alpha-\lfloor x+\alpha \rfloor$, rotation by $\alpha$.
\begin{thm} (Kurzweil \cite{kurz})\label{startthm} For any decreasing sequence of positive real numbers $\{a_i\}_{i=1}^{\infty}$ with divergent sum there exists $\mathcal{V} \subset [0,1)$, a full measure set of $\alpha$, such that for all ${\alpha \in \mathcal{V}} $ we have
$$\Leb\left(\LS B(R_{\alpha}^i(x),a_i)\right)=1 $$
 for every $x$.
 
  On the other hand,
$$\Leb\left(\LS B(R_{\alpha}^i(x),a_i)\right)=1 $$
 for every $x$ and every decreasing sequence of positive real numbers $\{a_i\}_{i=1}^{\infty}$~  with divergent sum iff $\alpha$ is badly approximable.
\end{thm}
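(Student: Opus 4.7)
My plan separates according to the three distinct claims.

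For the existence of a full-measure $\mathcal V$ given a fixed sequence $\mathbf a$, I would work on the product space $[0,1)^2$ and exploit translation invariance. Since $y \in B(R_\alpha^i x, a_i)$ iff $y - x$ lies in the arc of half-width $a_i$ about $i\alpha$ modulo $1$, the measure $\Leb(\LS B(R_\alpha^i x, a_i))$ is independent of $x$, and it suffices to show that for a.e.\ $\alpha$, $\Leb(\LS B(i\alpha, a_i)) = 1$. By Fubini this reduces to showing that
\[
E_i := \{(\alpha, y) \in [0,1)^2 : \|y - i\alpha\| < a_i\}
\]
satisfies $(\Leb \otimes \Leb)(\LS E_i) = 1$ in the two-dimensional Lebesgue measure on $[0,1)^2$. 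Each $E_i$ has product measure $2 a_i$, hence divergent total. The heart of the argument is that for $i \neq j$ the events $E_i, E_j$ are in fact \emph{independent} on the product space: for fixed $\alpha$ the slice measure is a piecewise-linear function of $d := \|(j-i)\alpha\|$ (constant equal to $2\min(a_i,a_j)$ on $[0, |a_i-a_j|]$, linearly decreasing to $0$ on $[|a_i-a_j|, a_i+a_j]$, then zero), and integrating this against the law of $d$ on $[0,1/2]$ (uniform with density $2$), valid after a one-time truncation so that $a_i < 1/4$, yields exactly $4 a_i a_j = (\Leb \otimes \Leb)(E_i)(\Leb \otimes \Leb)(E_j)$. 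Pairwise independence with divergent first moment then gives $(\Leb \otimes \Leb)(\LS E_i) = 1$ via Chung--Erd\H{o}s/Kochen--Stone, and Fubini transfers the conclusion to a.e.\ $\alpha$-slice.

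For the forward direction of the MSTP dichotomy (badly approximable implies the property for every $x$ and every admissible $\mathbf a$), I would fix such an $\alpha$, set $B_i := B(i\alpha, a_i)$, and apply Chung--Erd\H{o}s directly in $[0,1)$. The hypothesis produces $c = c(\alpha) > 0$ with $\|k\alpha\| \geq c/|k|$ for every nonzero integer $k$, from which the deterministic bound $\Leb(B_i \cap B_j) \leq \max(0,\, a_i + a_j - c/|i-j|)$ follows. Splitting the double sum according to whether $|i-j|$ is above or below $c/(a_i+a_j)$ and using monotonicity of $\mathbf a$, one verifies
\[
\sum_{i,j \leq N} \Leb(B_i \cap B_j) = (1 + o(1)) \Bigl( \sum_{i \leq N} \Leb(B_i) \Bigr)^2,
\]
which is the Chung--Erd\H{o}s input and yields $\Leb(\LS B_i) = 1$. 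Translation invariance extends this from $x = 0$ to every $x$.

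For the reverse direction, suppose $\alpha$ is not badly approximable, so there exist continued-fraction denominators $q_n$ with $\epsilon_n := q_n \|q_n \alpha\| \to 0$. For $i$ in a block $[q_n, q_n(1 + M_n))$ with $M_n \to \infty$ chosen so that $M_n \epsilon_n \to 0$, the orbit point $i\alpha$ lies within $o(1/q_n)$ of the finite set $\{r\alpha : 0 \leq r < q_n\}$, which by the three-distance theorem is contained in a union of $q_n$ arcs of length $O(1/q_n)$ whose $O(1/q_n)$-neighborhood has Lebesgue measure bounded away from $1$. Assigning $a_i$ a value slightly below $1/q_n$ on each such block and interpolating by a monotone envelope produces a decreasing $\mathbf a$ with $\sum a_i = \infty$ for which $\bigcup_{i \geq q_{n_0}} B(i\alpha, a_i)$ is trapped in such a set, so its limsup cannot have full measure.

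The main obstacle I anticipate is the reverse direction of (2): choosing $\mathbf a$ to be simultaneously globally monotone, divergent in sum, and responsible for balls that remain concentrated in a uniformly proper subset of $[0,1)$ along the entire sequence, not merely along the subsequence $\{q_n\}$, requires careful coordination between the growth of the $q_n$ and the values assigned between blocks. By contrast, the forward direction of the dichotomy is a routine Chung--Erd\H{o}s computation once the Diophantine lower bound is in hand, and part (1) reduces to the clean pairwise-independence identity described above.
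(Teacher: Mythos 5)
This theorem is not proved in the paper (it is quoted from Kurzweil \cite{kurz}), so your sketch can only be measured against the classical arguments and the paper's general machinery (Lemma \ref{separated}, Proposition \ref{LS reduction}). Your treatment of the first assertion is correct and takes a genuinely different, clean route: the sets $E_i=\{(\alpha,y):\|y-i\alpha\|<a_i\}$ are exactly pairwise independent in the square (your computation of the expected overlap against the law of $\|(j-i)\alpha\|$ is right, after a tail truncation making $a_i<1/4$; the case $a_i\not\to 0$ should be disposed of separately since no truncation is then possible, but it is trivial by density of the orbit), so the pairwise-independent Borel--Cantelli theorem gives full measure in the square, Fubini gives a full-measure slice for a.e.\ $\alpha$, and translation invariance transfers the conclusion from $x=0$ to every $x$. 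This is slicker than, and independent of, the separation-plus-dyadic-blocks argument that the paper generalizes to IETs.

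The dichotomy, however, has two genuine gaps. In the forward direction, the claim $\sum_{i,j\le N}\Leb(B_i\cap B_j)=(1+o(1))\bigl(\sum_{i\le N}\Leb(B_i)\bigr)^2$ does not follow from $\Leb(B_i\cap B_j)\le\max(0,a_i+a_j-c/|i-j|)$, and it is false in general: for $j-i$ equal to (a multiple of) a denominator $q_n$ one has $\|(j-i)\alpha\|\asymp 1/q_n$, and whenever $a_i\gg 1/q_n$ the overlap is of order $a_j$ rather than $a_ia_j$; summing these resonant pairs (already for $\alpha$ the golden mean and $a_i=\epsilon/i$) produces an excess of the same order as $\bigl(\sum_{i\le N} a_i\bigr)^2$ with a nonvanishing constant. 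What is true is quasi-independence up to a multiplicative constant $C(\alpha)$, and Chung--Erd\H{o}s/Kochen--Stone then yields only $\Leb(\limsup_i B_i)\ge 1/C(\alpha)>0$. You must add the upgrade from positive to full measure, exactly as in Proposition \ref{LS reduction} (ergodicity of the rotation together with monotonicity of $\mathbf a$), or argue directly via the $c/n$-separation of $\{i\alpha\}_{i\le n}$ and dyadic blocks as in Lemma \ref{separated}; with that step the forward direction is fine.

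In the reverse direction the sketch controls each block $[q_n,q_n(1+M_n))$ but not the limsup: since $\Leb(\limsup_i B_i)=\lim_{N\to\infty}\Leb\bigl(\cup_{i\ge N}B_i\bigr)$, one must bound entire tail unions. A uniform bound $2\delta$ per block does not prevent the tail union from being full, so the per-block measure bounds $\delta_k$ must be made summable (the limsup then actually has measure zero), and the indices between consecutive blocks must be assigned the \emph{next} block's small radius (an immediate drop, which is still monotone) rather than an unspecified monotone interpolation --- otherwise the $\sim q_{n_{k+1}}$ intermediate balls of radius $\sim \delta_k/q_{n_k}$ can cover everything. Choosing a sparse subsequence with $q_{n_k}\|q_{n_k}\alpha\|=\epsilon_{n_k}\to0$, then $M_k$ with $M_k\epsilon_{n_k}\to0$ and $\sum_k M_k\delta_k=\infty$ while $\sum_k\delta_k<\infty$, closes the construction; as written, the coordination you flag as an obstacle is precisely the missing step, so the ``only if'' half is not yet a proof.
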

Recall that  $\alpha$ is badly approximable if the terms in its continued fraction expansion  are uniformly bounded. Because badly approximable numbers are a (meager) set of measure 0, Lebesgue almost every $\alpha$ does not satisfy MSTP. Additionally, 
a part of Kurzweil's Theorem treats fixing a sequence and making a full measure statement rather than addressing all sequences satisfying a property at once.


This paper extends Kurzweil's results to interval exchange transformations (IETs) and geodesic flows on translation surfaces. The first section establishes terminology and states the theorems. The main results of this paper are Corollary~ \ref{strong kurz for IET} and Theorems \ref{main theorem}, \ref{rigidthm} and \ref{full measure thm}.


\section{Terminology and statement of results}
\begin{Bob} Given $L=(l_1,l_2,...,l_d)$
where $l_i \geq 0$, we obtain $d$ sub-intervals of the
interval $[0,\underset{i=1}{\overset{d}{\sum}} l_i)$: $$I_1=[0,l_1) ,
I_2=[l_1,l_1+l_2),...,I_d=[l_1+...+l_{d-1},l_1+...+l_{d-1}+l_d).$$ Given
 a permutation $\pi$ on  the set $\{1,2,...,d\}$, we obtain a d-\emph{Interval Exchange Transformation} (IET)  $ T \colon [0,\underset{i=1}{\overset{d}{\sum}} l_i) \to
 [0,\underset{i=1}{\overset{d}{\sum}} l_i)$ which exchanges the intervals $I_i$ according to $\pi$. That is, if $x \in I_j$ then $$T(x)= x - \underset{k<j}{\sum} l_k +\underset{\pi(k')<\pi(j)}{\sum} l_{k'}.$$
\end{Bob}
If $T$ is an IET, let $L(T)$ denote the length vector of $T$ and $\pi(T)$ denote the permutation of $T$. The IET with length vector $L$ and permutation $\pi$ is denoted $S_{L,\pi}$.
It is often convenient to restrict one's attention to IETs mapping from $[0,1)$ to $[0,1)$.
 In this case, IETs with a fixed permutation on $\{1,2,...,d\}$ are parametrized by the standard simplex in $\mathbb{R}^d$,
  $\Delta_d= \{(l_1,...,l_d): l_i \geq 0, \sum l_i=1\}$. We will denote Lebesgue measure on $\Delta_d$ by $\bold{m}_d$. 
We will denote Lebesgue measure on the unit interval (where unit length IETs act) by $\Leb$.
 A permutation on $\{1,...,d\}$ is \emph{irreducible} if $\pi(\{1,...,k\}) \neq \{1,...,k\}$ for any $k<d$. These are the permutations that yield IETs with dense orbits \cite{IET} and thus are the interesting IETs from the standpoint of shrinking target properties. The term almost every IET refers to Lebesgue measure on the disjoint union of all the simplices corresponding to irreducible permutations (which we view as the parameterizing space of all the IETs we are considering). 
The following shrinking target results are known for IETs.

\begin{thm}\label{start} (Boshernitzan and Chaika \cite{BCIET})
If $T$ is an IET that is ergodic with respect to some measure $\mu$ then
$$\mu\left(\LS B(T^ix,\frac {\epsilon} i)\right)=1$$
 for any $\epsilon>0$ and $\mu$-almost every $x$.
Moreover, if ${\underset{i \to \infty}{\lim} \, ia_i=0}$
 then there exists an irrational rotation $R$ such that
$$\Leb\left(\LS B(R^ix,a_i)\right)=0$$ for every $x$.
 Additionally there exists a 4-IET $T_0$,
 minimal, but not ergodic with respect to $\Leb$ such that $$\Leb\left(\LS B(T_0^ix,\frac 1 i)\right)<1$$ for a positive measure set of $x$.
\end{thm}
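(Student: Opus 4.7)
The plan is to treat the three parts of Theorem~\ref{start} in turn. For part $(1)$, I would apply Fubini to recast the assertion: it suffices to show that $\mu\times\mu$-almost every $(x,y)$ satisfies $d(T^ix,y)<\epsilon/i$ infinitely often. Writing $A_i=\{(x,y):d(T^ix,y)<\epsilon/i\}$, invariance of $\mu$ under $T$ gives $(\mu\times\mu)(A_i)=\int\mu(B(y,\epsilon/i))\,d\mu(y)$, which is of order $\epsilon/i$ whenever $\mu$ is non-atomic, so $\sum_i(\mu\times\mu)(A_i)$ diverges. Upgrading divergence to a full-measure limsup requires a Chung--Erd\H{o}s or second-moment argument, which needs pair-correlation bounds $(\mu\times\mu)(A_i\cap A_j)\lesssim(\mu\times\mu)(A_i)(\mu\times\mu)(A_j)$. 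Conditioning on $x$ and using invariance reduces this to the measure of the almost-$n$-periodic set $\mu(\{z:|T^nz-z|<r\})$ with $n=j-i$ and $r\sim\epsilon/i$. The key IET-specific input is that $T^n-\mathrm{id}$ is piecewise constant with at most $dn$ pieces whose constant values stay away from $0$ except on controlled subsets; this is supplied by Rauzy--Veech renormalization. I expect this pair-correlation bound, uniformly in all ergodic IETs and all $\mu$, to be the main obstacle.

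For part $(2)$, given a decreasing sequence $\{a_i\}$ with divergent sum and $ia_i\to 0$, I would choose $\alpha$ by prescribing its continued-fraction partial quotients so that the convergents $p_n/q_n$ satisfy $\|q_n\alpha\|\ll a_{q_n}$. By the three-distance theorem, for $i\in[q_n,q_{n+1})$ the orbit points $R_\alpha^ix$ cluster into $q_n$ near-periodic substrings each of diameter $\|q_n\alpha\|$, so each ball $B(R_\alpha^ix,a_i)$ in this range sits in one of $q_n$ balls of radius $a_{q_n}+\|q_n\alpha\|\lesssim a_{q_n}$. Hence $\bigcup_{i=q_n}^{q_{n+1}-1}B(R_\alpha^ix,a_i)$ has Lebesgue measure at most $2q_na_{q_n}$, which tends to $0$ by hypothesis. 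Summing these bounds over $n$ and applying the convergence half of Borel--Cantelli yields $\Leb(\LS B(R_\alpha^ix,a_i))=0$; the alignment of $\{q_n\}$ with the decay of $a_i$ must be arranged by an inductive choice of partial quotients.

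For part $(3)$, I would invoke the Keane-type construction of a minimal but non-uniquely-ergodic 4-IET $T_0$, so that Lebesgue measure decomposes as a non-trivial convex combination $\Leb=c_1\mu_1+c_2\mu_2$ of mutually singular ergodic invariant probability measures with $c_1,c_2>0$. For $\Leb$-typical $x$ in the $\mu_1$-generic set, the orbit equidistributes with respect to $\mu_1$, and the explicit tower structure of Keane's example can be exploited to produce a positive-$\Leb$-measure set of $y$ (concentrated near the support of $\mu_2$) on which $\mu_1(B(y,1/i))$ decays summably in $i$. Applying Borel--Cantelli part $(1)$ to $\mu_1$ then excludes these $y$ from $\LS B(T_0^ix,1/i)$ for $\mu_1$-almost every $x$, producing the desired strict inequality on a positive-$\Leb$-measure set of $x$.
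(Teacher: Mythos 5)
First, a point of comparison: the paper does not prove Theorem \ref{start} at all --- it is quoted as background from \cite{BCIET} --- so your outline can only be judged on its own terms, and as it stands each of the three parts rests on an unproved key step.

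For part (1), the second-moment/Chung--Erd\H{o}s route stands or falls on the pair-correlation estimate you yourself flag as ``the main obstacle,'' and this is not a technicality: almost every IET is rigid (this is precisely what Section \ref{rigid} of the paper exploits), and along rigidity times $n$ the set $\{z:|T^nz-z|<r\}$ has measure close to $1$, so the pairwise bound $(\mu\times\mu)(A_i\cap A_j)\lesssim(\mu\times\mu)(A_i)(\mu\times\mu)(A_j)$ fails outright for many pairs $(i,j)$; at best one could hope for an averaged correlation bound, and nothing in your sketch produces it, especially uniformly over all ergodic $T$ and all (possibly singular) invariant $\mu$. (Also, $(\mu\times\mu)(A_i)$ is only bounded \emph{below} by a constant times $\epsilon/i$ in general; the upper bound you assert is false for singular $\mu$, though only the lower bound is needed.) For part (2) there is a concrete quantitative error: for $i\in[q_n,q_{n+1})$, writing $i=mq_n+s$, the point $R_\alpha^ix$ lies within $m\|q_n\alpha\|$ of $R_\alpha^sx$, and $m$ ranges up to about $q_{n+1}/q_n$, so the ``cluster'' through $R_\alpha^sx$ has diameter of order $(q_{n+1}/q_n)\|q_n\alpha\|\approx 1/q_n$, not $\|q_n\alpha\|$. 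Consequently your bound $2q_na_{q_n}$ for the measure of $\bigcup_{i=q_n}^{q_{n+1}-1}B(R_\alpha^ix,a_i)$ is false; the honest bound from this argument is $2q_na_{q_n}+O(1)$, which is useless. The statement is still true, but the proof requires splitting each block into sub-blocks, using the clustering bound early in the block and the trivial bound $\sum_i 2a_i\le 2Ma_M\le 2\sup_{j\ge M}ja_j$ late in the block, and choosing the partial quotients inductively so that both contributions are summable over blocks; this bookkeeping, which uses $ia_i\to 0$ in an essential way, is the actual content of the proof and is missing.

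For part (3) the soft reduction is correct: if $\sum_i\mu_1(B(y,1/i))<\infty$, then Borel--Cantelli on $([0,1),\mu_1)$ applied to $T_0^{-i}B(y,1/i)$ (using $T_0$-invariance of $\mu_1$) excludes $y$ from $\LS B(T_0^ix,1/i)$ for $\mu_1$-a.e.\ $x$, and since $\Leb=c_1\mu_1+c_2\mu_2$, any set of full $\mu_1$-measure has $\Leb$-measure at least $c_1$, so Fubini finishes the argument --- \emph{provided} there is a positive $\Leb$-measure set of such $y$. That proviso is the entire difficulty. Mutual singularity of $\mu_1$ and $\Leb$ only gives $\mu_1(B(y,r))/r\to 0$ for $\Leb$-a.e.\ $y$, which is far weaker than the summability $\sum_i\mu_1(B(y,1/i))<\infty$ (one needs decay roughly like $o\bigl(1/(i\log^{1+\delta}i)\bigr)$ on a positive measure set of $y$). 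The sentence ``the explicit tower structure of Keane's example can be exploited'' is exactly the step that must be proved, and it may require choosing the parameters of the Keane/Keynes--Newton construction rather than holding for an arbitrary minimal non-uniquely ergodic $4$-IET. So the overall architecture of your proposal is reasonable, but in all three parts the decisive quantitative input is asserted rather than established.
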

The following result is known for shrinking targets about a point and is strengthened by Corollary \ref{strong kurz for IET}.
\begin{thm}(Athreya and Ulcigrai \cite{ja}) \label{AU} Given $y \in [0,1)$ almost every IET $T$ satisfies the property that $$\Leb\left(\LS T^{-i}B(y,\frac c i)\right)=1$$ for some $c$ depending on $T$.
\end{thm}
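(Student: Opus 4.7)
The plan is to work in the product space $\Omega = \Delta_d \times [0,1)$ with measure $\nu = \mathbf{m}_d \times \Leb$. For the fixed target $y$ and a parameter $c > 0$, define
\[
\mathcal{A}_i^c = \{(T,x) \in \Omega : T^i x \in B(y, c/i)\}.
\]
Because every IET preserves Lebesgue measure, $\Leb(T^{-i}B(y, c/i)) = 2c/i$ whenever $B(y, c/i) \subset [0,1)$, so Fubini gives $\nu(\mathcal{A}_i^c) \asymp c/i$ and $\sum_i \nu(\mathcal{A}_i^c) = \infty$.

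The crucial step, and the one I expect to be the main obstacle, is establishing pairwise quasi-independence: one needs a constant $K$ independent of $i,j,c$ such that
\[
\nu(\mathcal{A}_i^c \cap \mathcal{A}_j^c) \le K\, \nu(\mathcal{A}_i^c)\, \nu(\mathcal{A}_j^c)
\]
for all $i \ne j$. The natural approach is to fix $x$ and estimate the $\mathbf{m}_d$-measure of the set of length vectors $L$ for which both $S_{L,\pi}^i(x)$ and $S_{L,\pi}^j(x)$ land near $y$. For fixed $x$, the map $L \mapsto S_{L,\pi}^i(x)$ is piecewise affine in $L$, with pieces indexed by the combinatorial itinerary of the finite orbit $x, Tx, \ldots, T^{i-1}x$ through the intervals $I_1, \ldots, I_d$. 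One would bound the joint measure by summing the explicit volume contributed by each piece on which both itineraries are fixed; the difficulty is obtaining a constant uniform in $(i,j)$, which requires delicate combinatorics tied to the Rauzy–Veech renormalization and the fact that typical IETs are not too distorted.

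Once quasi-independence is available, the Erdős–Rényi form of the divergent Borel–Cantelli lemma yields $\nu(\LS \mathcal{A}_i^c) \ge 1/K$. Fubini then produces a positive $\mathbf{m}_d$-measure set $G_c = \{T : \Leb(\LS T^{-i} B(y, c/i)) > 0\}$, and since almost every IET is ergodic (Masur–Veech), the essentially $T$-invariant set $\LS T^{-i}B(y,c/i)$ has full $\Leb$-measure for every $T \in G_c$. Finally, the union $G = \bigcup_{c > 0} G_c$ is invariant under Rauzy–Veech renormalization---a first-return operation preserves long-term recurrence statistics up to rescaling the constant $c$---and ergodicity of Rauzy–Veech on each Rauzy class upgrades $\mathbf{m}_d(G) \ge 1/K$ to $\mathbf{m}_d(G) = 1$. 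Summing over irreducible permutations and taking $c(T)$ to be any witnessing value yields the theorem.
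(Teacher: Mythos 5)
The decisive step of your argument is missing: the uniform pairwise quasi-independence $\nu(\mathcal{A}_i^c \cap \mathcal{A}_j^c) \le K\,\nu(\mathcal{A}_i^c)\,\nu(\mathcal{A}_j^c)$ with $K$ independent of $i,j$ is precisely the content of the theorem, and you only name it as ``the main obstacle'' and gesture at the piecewise-affine dependence of $S_{L,\pi}^i(x)$ on $L$. For circle rotations this estimate is an exact computation (for $i\neq j$ the map $(\alpha,x)\mapsto(x+i\alpha,x+j\alpha)$ pushes Lebesgue measure to Lebesgue measure), but for IETs the affine pieces in $L$ have unbounded distortion as $(i,j)$ grow and as $L$ approaches the boundary of the simplex, and no uniform constant on all of $\Delta_d$ is known; controlling this is exactly why the literature takes other routes. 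The paper does not prove the statement this way at all: it quotes Athreya--Ulcigrai and then strengthens it (Corollary \ref{strong kurz for IET}) by a different mechanism --- fix a translation surface, parametrize by the direction $\theta$, use Boshernitzan's estimate that $\LebP\left(\{\theta: e_{T_\theta}(n)<\epsilon/n\}\right)$ is small uniformly in $n$, and combine the separation Lemmas \ref{separated} and \ref{separated 2} with an escalation argument (as in Propositions \ref{kurz for iet} and \ref{other}): if a positive-measure set of directions had small $\Leb\left(\underset{i\geq N}{\cup}T_\theta^{-i}B(y,a_i)\right)$, integrating the guaranteed new mass over dyadic blocks $[r^{k},r^{k+1}]$ contradicts divergence of $\sum r^k a_{r^k}$. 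No quasi-independence of events in a product space is needed, and full measure of the limsup comes from ergodicity of the IET itself (Theorem \ref{kms} and Proposition \ref{LS reduction}), not from a second Borel--Cantelli step.

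Your concluding upgrade from positive to full measure in the parameter is also not sound as stated. The set $G=\bigcup_{c>0}G_c$ is defined relative to the fixed target $y\in[0,1)$, and it is not invariant under Rauzy--Veech induction: after finitely many steps the induced interval $[0,\delta_{max})$ no longer contains $y$ (unless $y=0$), and the time change between $T$ and its induced map is unbounded, so ``recurrence statistics up to rescaling $c$'' does not transfer in any obvious way; moreover Rauzy--Veech induction does not preserve $\mathbf{m}_d$ (Veech's invariant measure is infinite and only equivalent to Lebesgue), so invoking its ergodicity for a zero--one law requires an actual invariance argument you have not supplied. The paper sidesteps this entirely: its contradiction argument applies to an arbitrary positive-measure set of parameters, so no zero--one law on the parameter space is ever needed. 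To salvage your approach you would need either a genuine proof of the quasi-independence estimate (with the distortion control that Kerckhoff-type balancedness provides, and even then only on a positive-measure set of renormalization times), or to replace the final step by an argument in the spirit of the paper's escalation over all positive-measure sets of parameters.
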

Another related result,
\begin{thm} (Kim and Marmi \cite{log iet}) Given an IET $T$ let $${\tau_r(x,y)=\min \{n>0: |T^nx-y|<r\}}.$$ For almost every IET $T$, ${\underset{ r \to 0^+}{\lim} \frac{ \log(\tau_r(x,y))}{-\log r}= 1}$ for almost every $x$.
\end{thm}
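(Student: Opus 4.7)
The plan is to split the claim into the matching one-sided inequalities $\liminf \geq 1$ and $\limsup \leq 1$ and attack each by a Borel--Cantelli argument on the geometric sequence of radii $r_k = 2^{-k}$. The lower bound is soft and uses only measure-preservation of $T$; the upper bound is the substantive half and requires quantitative shrinking-target input specific to almost every IET.

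For the lower bound, fix $\epsilon > 0$ and set $r_k = 2^{-k}$. The set of $x$ with $\tau_{r_k}(x,y) \leq r_k^{-(1-\epsilon)}$ is contained in
$$\bigcup_{i=1}^{\lceil r_k^{-(1-\epsilon)} \rceil} T^{-i} B(y, r_k),$$
which has Lebesgue measure at most $2 r_k^\epsilon = 2 \cdot 2^{-k\epsilon}$. Summing over $k$ and applying the first half of Borel--Cantelli yields $\tau_{r_k}(x,y) > r_k^{-(1-\epsilon)}$ for all large $k$ and $\Leb$-almost every $x$. Monotonicity of $\tau_r$ in $r$ interpolates between successive scales to produce $\liminf_{r \to 0^+} \log \tau_r/(-\log r) \geq 1 - \epsilon - o(1)$, and I would let $\epsilon \downarrow 0$ along a countable sequence to conclude. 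Note that this half requires no ergodicity of $T$.

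For the upper bound I want to show that for each $\epsilon > 0$ and almost every $x$, $\tau_{r_k}(x,y) \leq r_k^{-(1+\epsilon)}$ for all large $k$. Writing $N_k = \lfloor r_k^{-(1+\epsilon)} \rfloor$, the complementary event is
$$F_k = \bigcap_{i=1}^{N_k} T^{-i}\bigl(B(y, r_k)^c\bigr),$$
and I would aim to establish $\sum_k \Leb(F_k) < \infty$, so that the first Borel--Cantelli lemma again applies. If the events $\{T^i x \in B(y, r_k)\}_{i=1}^{N_k}$ were independent, one would get $\Leb(F_k) \leq (1 - 2r_k)^{N_k} \approx \exp(-2 r_k^{-\epsilon})$, which is summable with enormous room to spare. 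The task therefore reduces to extracting enough approximate independence of these events, uniformly across the scales $r_k$, for almost every IET.

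The main obstacle is supplying this decorrelation estimate, and it is the place where IET-specific tools enter. I would attempt it through Rauzy--Veech renormalization: at induction times at which the map induced on a subinterval of length $\sim r_k$ has balanced lengths and bounded distortion (a positive-density event for almost every $T$, by Veech's theorem), the first $N_k$ iterates of $T$ organize into many quasi-independent trials of hitting a set of measure $\approx r_k$. Theorems \ref{start} and \ref{AU}, themselves proved via this renormalization mechanism, already give infinitely-often visits at the optimal rate $c/i$; refining their arguments to produce a summable bound on $\Leb(F_k)$, rather than the pointwise infinitely-often statements they supply, is the real technical step. Once this quantitative strengthening is in hand the upper bound follows, and combined with the lower bound it yields the stated limit.
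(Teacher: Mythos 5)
This statement is not proved in the paper at all: it is quoted from Kim--Marmi \cite{log iet}, with the $\liminf$ half attributed to Galatolo \cite{galatolo}, so there is no internal proof to compare against. Measured against what the theorem actually requires, your lower bound is fine: the union bound $\Leb\bigl(\bigcup_{i\le r_k^{-(1-\epsilon)}}T^{-i}B(y,r_k)\bigr)\le 2r_k^{\epsilon}$, Borel--Cantelli, and monotonicity of $\tau_r$ in $r$ do give $\liminf_{r\to 0^+}\log\tau_r/(-\log r)\ge 1$ for a.e.\ $x$ using only measure preservation; this is exactly the easy (Galatolo) half.

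The upper bound, however, is where the entire content of the theorem lies, and your proposal does not prove it: the summable bound on $\Leb(F_k)$ is announced as ``the real technical step'' and then deferred, so the argument as written establishes only $\liminf=1$, not the limit. Two concrete problems with the proposed route. First, the quasi-independence you invoke is not a consequence of Theorems \ref{start} or \ref{AU}: those give infinitely-often visits at rate $c/i$, which (combined with your lower bound) already yields $\liminf \log\tau_r/(-\log r)=1$ along a sequence of scales, but gives no control whatsoever on the $\limsup$, and no amount of ``refining'' an infinitely-often statement produces the needed summable, every-scale estimate without new quantitative input. Second, the renormalization mechanism you cite (balanced, bounded-distortion Rauzy--Veech times) is available for a.e.\ IET only along a \emph{positive-density} set of scales; a bound on $\Leb(F_k)$ along a positive-density subsequence $k_j$ only controls $\tau_{r_{k_j}}$, and interpolating to all small $r$ forces the exponent up by a factor $1+ (k_{j+1}-k_j)/k_j$, so you need good scales with sublinear gaps (density one), which is precisely the Diophantine/Roth-type condition that Kim--Marmi's actual proof supplies and verifies for a.e.\ IET. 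Without an argument producing that every-scale control, the $\limsup\le 1$ half --- and hence the theorem --- remains unproved.
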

The $\liminf$ part of the statement was established by Galatolo
\cite{galatolo}.

A homogeneous result has recently been proven.
\begin{thm} \label{march} (Marchese \cite{LM}) Let $\{a_i\}_{i=1}^{\infty}$ be a decreasing sequence with divergent sum and with the additional property that $\{ia_i\}_{i=1}^{\infty}$ is decreasing. For almost every IET $T$ $$\delta \in \LS B(T^i(\delta'),a_i))$$ where $\delta$ and $\delta'$ are any discontinuities of $T$.
\end{thm}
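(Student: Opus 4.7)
The plan is to prove this by a Borel--Cantelli argument in the parameter space of IETs, using Rauzy--Veech induction to get the required measure estimates and quasi-independence. Fix the combinatorial data $\pi$ and indices $j,j'$ so that the relevant discontinuities are the linear functions $\delta_j(L)=l_1+\cdots+l_j$ and $\delta_{j'}(L)$ of the length vector $L\in\Delta_d$. Let
$$E_i=\{L\in\Delta_d:|S_{L,\pi}^i\delta_{j'}(L)-\delta_j(L)|<a_i\}.$$
The theorem reduces to showing $\bold{m}_d(\LS E_i)=1$, and by the Erd\H{o}s--Chung (or Sprind\v{z}uk) form of divergence Borel--Cantelli it suffices to establish (i) $\sum_i \bold{m}_d(E_i)=\infty$, and (ii) a quasi-independence estimate $\bold{m}_d(E_i\cap E_j)\lesssim \bold{m}_d(E_i)\bold{m}_d(E_j)+(\text{error})$ with controllable error.

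For (i), the key tool is Rauzy--Veech induction: stratify $\Delta_d$ by cylinders $\Delta_\gamma$ indexed by admissible paths $\gamma$ of length $n$, so that on $\Delta_\gamma$ the IET $S_{L,\pi}$ has a Rokhlin tower structure, with first return map to a subinterval $J_\gamma$ being an IET whose combinatorics are determined by $\gamma$ and whose length vector is $A_\gamma^{-1}L/|A_\gamma^{-1}L|_1$ (here $A_\gamma$ is the induction matrix). The orbit $S^i\delta_{j'}$ is piecewise linear in $L$, and tracking which floor of the tower it occupies relates $i$ to a Birkhoff return-time sum. Using Masur's and Veech's ergodicity of Rauzy--Veech--Zorich renormalization together with the Oseledets bounds of Zorich and Forni, for a.e.\ $L$ the lengths $|J_\gamma|$ and return times along a sequence of induction times have good quantitative control. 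This yields $\bold{m}_d(E_i)\asymp a_i$ up to lower-order factors, so $\sum_i \bold{m}_d(E_i)\asymp \sum_i a_i=\infty$. The hypothesis that $\{ia_i\}$ is decreasing enters naturally here: it ensures that one may regroup the indices $i\in[T_n,T_{n+1})$ corresponding to one Rauzy--Veech stage into blocks where $\sum_{i\in[T_n,T_{n+1})}a_i$ is comparable to $(T_{n+1}-T_n)\,a_{T_n}$, so that the divergent sum survives the regrouping without needing uniformity in the sequence.

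The main obstacle, as in any Khintchine-type theorem, is (ii): bounding the correlation $\bold{m}_d(E_i\cap E_j)$, especially for close pairs $i<j$, where $T^i\delta_{j'}$ and $T^j\delta_{j'}$ move in $L$ in highly correlated ways. The strategy is to fix $i$ and, within a suitable Rauzy--Veech cylinder of comparable scale, reparametrize so that the displacement $T^j\delta_{j'}-T^i\delta_{j'}$ becomes (to leading order) an independent coordinate; the resulting measure estimate factorizes up to an error term that must be summed. This is exactly where $\{ia_i\}$ decreasing becomes indispensable, because the error terms naturally scale like $i a_i / N$ along blocks of length $N$, and the summability of these errors over dyadic blocks fails without such regularity. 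Once the correlation bound is established, the divergence Borel--Cantelli lemma concludes the proof, and removing the dependence on the fixed pair $(j,j')$ is just a union bound over the finitely many pairs of discontinuities.
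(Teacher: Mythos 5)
First, note that Theorem \ref{march} is not proved in this paper at all: it is quoted from Marchese \cite{LM} as background, so the only fair comparison is with the strategy of that work, which reformulates the approximation condition at Rauzy--Veech--Zorich induction times and runs a Borel--Cantelli argument there, using Kerckhoff-type distortion (balance) estimates for the induction cocycle.

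Your proposal has a genuine gap at its center: step (ii), the pairwise quasi-independence bound $\bold{m}_d(E_i\cap E_j)\lesssim \bold{m}_d(E_i)\bold{m}_d(E_j)+\text{error}$, is precisely the hard part, and as stated it would fail. Already in the rotation case $d=2$ (where $E_i$ is essentially $\{\alpha: \|i\alpha\|<a_i\}$) the events for arithmetically related indices, e.g.\ $j=2i$ or more generally $j$ a multiple of $i$, are far too strongly correlated for a Sprind\v{z}uk/Erd\H{o}s--Chung argument over the full index set; this is exactly why the classical Khinchin theorem is not proved by a direct divergence Borel--Cantelli in the parameter $\alpha$ but by passing to continued-fraction (renormalization) times, and why the non-monotone case leads to the Duffin--Schaeffer phenomenon. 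Your sketch acknowledges this as ``the main obstacle,'' but the proposed fix --- ``reparametrize so that the displacement becomes an independent coordinate,'' with errors ``scaling like $ia_i/N$'' --- is a description of a hoped-for estimate, not an argument: you do not identify which pairs $(i,j)$ need it, do not bound how many such pairs there are per dyadic block, and monotonicity of $ia_i$ does not by itself remove the resonant correlations. In addition, the input (i), $\bold{m}_d(E_i)\asymp a_i$, is asserted rather than proved: the map $L\mapsto S_{L,\pi}^i\delta'(L)-\delta(L)$ is piecewise linear with a number of pieces and a distortion that grow with $i$, so uniform comparability requires genuine work (balanced columns of the induction matrices, Kerckhoff-style distortion control), which is again most naturally done at induction times. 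In short, the correct route --- and the one Marchese follows --- is to convert the hitting condition $\delta\in B(T^i\delta',a_i)$ into a condition on the lengths/columns at Rauzy--Veech--Zorich renormalization times and apply Borel--Cantelli to the renormalization dynamics, where the needed independence is supplied by the Markov/distortion structure of the induction rather than by a pairwise correlation bound over all $i$.
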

All of the above results also have interpretations for the other dynamical system we are concerned with: unit speed flow on translation surfaces. For an introduction to translation surfaces see \cite{t surf} or \cite{survey}. Section 1 of \cite{bil surv} provides a nice treatment of a special case.
\begin{Bob} A \emph{ translation surface}  $Q$ is the finite union of polygons $P_1,...,P_r$ such that
\begin{enumerate}
\item the sides of the polygons are oriented so that the interior lies to the left
\item each side is identified to exactly one parallel side of the same length. They are glued together in an opposite orientation by parallel translation.
\end{enumerate}
\end{Bob}
This definition appears in
\cite[Definition 4]{t surf}.
In flat surfaces distance and a 2-dimensional volume $\LebF$ make sense because they make sense in each polygon. Direction makes sense because of the gluings.  
Let us assume that there is a fixed horizontal direction. $F_{\theta}^t$ denotes flow with unit speed in direction $2\pi \theta$ to the horizontal. Straight line flows with unit speed on $Q$ are parametrized by $[0,1)$.  

We now present slightly different idea. Fix a translation surface $Q$ and a line segment $\bar v$. The flow on $Q$ in every direction not parallel to $\bar v$ gives an interval exchange transformation on $\bar v$ by the first return map (see \cite[Section 5.1]{survey} for a discussion in a survey paper). 
Let us choose the direction of $\bar v$ to be the horizontal. In this way we obtain a one parameter family of flows on $Q$, $\{F_{\theta}^t\}_{\theta \in (0, 1)}$ and a corresponding one parameter family of IETs on $\bar v$, $\{T_{\theta}\}_{\theta \in (0,1)}$. As is the case in Kurzweil's Theorem we will make statements about almost every transformation. 
  For readability reasons we will denote Lebesgue measure on $[0,1)$ by $\LebP$ when it is the parameterizing space of transformations. Recall that when $[0,1)$ is the space a transformation acts on we denote Lebesgue measure by $\Leb$.

A specific case of a flat surface is a square with opposite sides identified. This is a torus. If we let $\bar{v}$ denote one of the sides of the square then $T_{\theta}$ is rotation by $\cot(\theta)$ mod 1 (or $2\pi \cot(\theta)$ on the unit circle).
\begin{Bob} A line segment in $Q$ is called a \emph{saddle connection} if it connects two vertices of the surface and has no vertex in its interior.
\end{Bob}

%


To state the results of this paper we introduce two terms, motivated by Kurzweil's Theorem, in the setting of $\mathbb{Z}$ and $\mathbb{R}$ actions.

\begin{Bob}
Let $(X,d)$ be a compact metric space, let $\mathcal{F}$ be a family of $\mu$-measure preserving
$\mathbb{Z}$-actions ${T \colon (X,d) \to (X,d)}$  and let $\nu$ be
a measure on $\mathcal{F}$. We say $\mathcal{F}$ has the \emph{Kurzweil
property} if given a decreasing sequence $\{a_i\}_{i=1}^{\infty}$
such that $\underset{i=1}{\overset{\infty}{\sum}}
\mu\left(B(T^ix,a_i)\right)$ diverges for all $x$, $\nu$-almost
every $T \in \mathcal{F}$ satisfies the property that
\begin{center}
$\LS B(T^ix,a_i)$ has full $\mu$-measure for $\mu$-almost every $x$.
\end{center}

 $\mathcal{F}$ has the \emph{strong Kurzweil property} if given a decreasing sequence $\{a_i\}_{i=1}^{\infty}$
  such that $\underset{i=1}{\overset{\infty}{\sum}} \mu\left(B(x,a_i)\right)$
   diverges for all $x$ then $\nu$-almost every
   $T \in \mathcal{F}$ satisfies the property that
\begin{center}
$\LS T^{-i}\left(B(y,a_i)\right)$ has full $\mu$-measure for every
$y$.
\end{center}
\end{Bob}
In Kurzweil's Theorem $\mathcal{F}=\{R_{\alpha}\}_{\alpha \in [0,1)}$ and $\nu=\Leb$.

We introduce the terminology because, like rotations the typical IET does not satisfy MSTP.
However, as a family they satisfy shrinking target properties. The Kurzweil property addresses $B(T^ix,a_i)$ and is dual to MSTP which addresses $T^{-i}\left(B(y,a_i)\right)$.
The strong Kurzweil property is motivated by rephrasing Kurzweil's result to be closer to the MSTP. By Fubini's Theorem the
 strong Kurzweil property implies the Kurzweil property because
$$\{(x,y): y \in \LS B(T^ix,a_i)\}=\{(x,y):x \in \LS T^{-i}\left(B(y,a_i)\right)\}.$$
 In the case of rotations the Kurzweil and strong Kurzweil properties are equivalent because showing that there exists $x_0$ such that
$\Leb\left(\LS B(R^i_{\alpha}x_0,a_i)\right)=1$
 is equivalent to the fact that for every $x$ we have
$\Leb\left(\LS B(R^i_{\alpha}x,a_i)\right)=1$
 which is equivalent to the fact that for every $y$ we have
$\Leb\left(\LS R_{\alpha}^{-i}(B(y,a_i))\right)=1$.

The Kurzweil property for $\Leb$ preserving actions on $[0,1)$ considers only decreasing sequences with divergent sum. These sequences are called \emph{standard}. 
We state a few properties.
\begin{enumerate}
\item Let $r \in \mathbb{N}$ and $r\geq 2$. Define $\{b_i\}_{i=1}^{\infty}$ by $b_i= a_{r^k}$ for $r^{k-1}\leq i <r^k$. If $\{a_i\}_{i=1}^{\infty}$ is standard then $\{b_i\}_{i=1}^{\infty}$ is standard. It is obviously non-increasing. To see that it has divergent sum notice that by our assumption that $\{a_i\}_{i=1}^{\infty}$ is non-increasing $\underset{k=0}{\overset{\infty}{\sum}}r^{k+1}a_{r^k}\geq \underset{i=1}{\overset{\infty}{\sum}}a_i$. Also notice that $\underset{k=0}{\overset{\infty}{\sum}}r^{k+1}a_{r^k}\leq r^2  \underset{i=1}{\overset{\infty}{\sum}}b_i.$
\item If $\{a_i\}_{i=1}^{\infty}$ is standard and $S$ is a subset of $\mathbb{N}$ with positive lower density then $\underset{i \in S}{\sum} a_i= \infty.$  To see this notice that if $S$ has positive lower density the there exists $c>0$ such that for all big enough $r$ and $k$ we have $|S\cap [r^k,r^{k+1}]|>cr^k$. Therefore $ \underset{i \in S}{\sum} a_i$ diverges with $\underset{k=1}{\overset{\infty}{\sum}} cr^{k}a_{r^{k+1}}$.
\item If $b_i\leq a_i$ then $\LS B(T^ix,b_i)\subset \LS B(T^ix,a_i)$.
\item To establish the Kurzweil and strong Kurzweil properties it suffices to consider $\{a_i\}_{i=1}^{\infty}$ with $\underset{n \to \infty } {\limsup} \, na_n=0$. This follows from the previous property.
\end{enumerate}

We now extend the definition of the Kurzweil and strong Kurzweil
properties to $\mathbb{R}$-actions.
\begin{Bob}
Let $\mathcal{F}$ be a family of $\mu$-measure preserving
$\mathbb{R}$-actions ${F \colon (X,d) \to (X,d)}$ and let $\nu$ be a
measure on $\mathcal{F}$. $\mathcal{F}$ is said to satisfy the
\emph{ Kurzweil property} if for any decreasing function $f \colon
\mathbb{R} \to \mathbb{R}^+$ such that $\int_0 ^{\infty}
\mu\left(B(F^tx,f(t))\right)dt=\infty$ for all $x$, $\nu$-almost
every $F \in \mathcal{F}$ satisfies the property that
\begin{center} $\CLS B(F^tx,f(t))$ has full $\mu$-measure for $\mu$-almost every $x$.
\end{center}

 $\mathcal{F}$ is said to satisfy the \emph{strong Kurzweil property} if for any decreasing function
  $f \colon \mathbb{R} \to \mathbb{R}^+$ such that
  $\int_0 ^{\infty} \mu\left(B(x,f(t))\right)dt=\infty$ for all $x$ then $\nu$-almost every
   $F \in \mathcal{F}$ satisfies the property that
\begin{center} $\CLS F^{-t}\left(B(y,f(t))\right)$ has full $\mu$-measure for every $y$.
\end{center}
\end{Bob}

\begin{thm}\label{main theorem} Let $Q$ be a translation surface then $$\mathcal{F}=\{F_{\theta}^t:Q \to Q \text{ flow in direction } \theta \text{ with unit speed}\}$$ and measure $\LebP$ satisfies the strong Kurzweil property.
\end{thm}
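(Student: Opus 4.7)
The plan is to reduce Theorem~\ref{main theorem} to a strong Kurzweil property for the one-parameter IET family $\{T_\theta\}_{\theta \in (0,1)}$ obtained as the first return maps of $\{F_\theta^t\}$ to a fixed horizontal transversal $\bar v \subset Q$; this IET statement is (an analog of) Corollary~\ref{strong kurz for IET}, which I treat as a prior input. The bridge between the two sides is the suspension representation: $F_\theta^t$ is the suspension of $T_\theta$ under the first return time function $r_\theta$, and $\LebF$ pulls back to $\Leb \times dt$ on the suspension. For $\LebP$-a.e. $\theta$, $T_\theta$ is ergodic (Masur--Veech), and Birkhoff provides $r_n^\theta(x)/n \to C_\theta$ for $\Leb$-a.e. $x \in \bar v$, with $C_\theta$ uniformly bounded on compact subsets of the $\theta$-space.

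Next I translate the hypothesis. Since $\LebF(B(\cdot, r)) \leq \pi r^2$, the divergence $\int_0^\infty \LebF(B(x, f(t)))\, dt = \infty$ forces $\int_0^\infty f(t)^2\, dt = \infty$; as $f$ is positive and decreasing to $0$ this gives $\int_0^\infty f(t)\, dt = \infty$ too, and hence $\sum_n f(Kn) = \infty$ for every $K > 0$. Set $a_n := c\, f(Kn)$, where $K$ is an upper bound for $C_\theta$ on a compact working $\theta$-set and $c > 0$ is a geometric constant encoding the width, measured on $\bar v$, of the flow tube that traverses a $2$-dimensional ball of radius $f(t)$ on $Q$; then $\{a_n\}$ is standard. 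Any $y \in Q$ can be written, outside a $\LebP$-null set of $\theta$, as $y = F_\theta^{s_0}(y_0)$ with $y_0 \in \bar v$ and $0 \leq s_0 < r_\theta(y_0)$, and the IET event $T_\theta^n(x) \in B(y_0, a_n)$ implies the flow event $F_\theta^t(x) \in B(y, f(t))$ for some $t$ near $r_n^\theta(x) + s_0$. Applying the IET strong Kurzweil property to $\{a_n\}$ yields a $\LebP$-full set of $\theta$ on which, for every $y_0 \in \bar v$, $\Leb$-a.e. $x \in \bar v$ visits $B(y_0, a_n)$ infinitely often; transferred back, this gives the flow shrinking target event for every $y \in Q$ and $\Leb$-a.e. $x \in \bar v$. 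Lifting from $\bar v$ to all of $Q$ is a routine Fubini argument via the suspension: the point $F_\theta^s(x_0)$ is good if and only if $x_0 \in \bar v$ is good, up to a time shift by $s$.

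The principal obstacle is preserving the universal quantifier over $y \in Q$. In the representation $y = F_\theta^{s_0}(y_0)$ the point $y_0$ depends on both $y$ and $\theta$, so as $\theta$ varies $y_0$ ranges over all of $\bar v$; one therefore needs the IET Kurzweil event to hold for \emph{every} $y_0 \in \bar v$ on a common full-measure set of $\theta$, which is exactly what the strong (as opposed to ordinary) Kurzweil property provides. Secondary technical points include excluding the countably many directions along which $y$ reaches a singular point, exhausting the $\theta$-parameter space by compact sets on which $C_\theta$ and $K$ are uniformly controlled, and ensuring the geometric constant $c$ is uniform away from the cone points of $Q$; none of these spoils the argument, since all excluded sets are $\LebP$-null.
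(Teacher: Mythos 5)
Your suspension/transfer step is essentially the paper's own argument for Theorem \ref{main theorem}: the paper deduces the theorem from the statement that, for the \emph{fixed} surface $Q$, the one-parameter family of first-return maps $\{T_{\theta}\}_{\theta\in(0,1)}$ with measure $\LebP$ satisfies the strong Kurzweil property (Proposition \ref{skurz}), by converting $f$ into a standard sequence via return times to the transversal (the paper bounds the return time by its maximum $k$ and uses $a_i=f(k(i+1))$; your Birkhoff-average bound $K$ also works for an a.e.-$x$ conclusion, provided the working set of directions stays away from the finitely many directions parallel to $\bar v$, where the return time is unbounded --- note that a compact subset of $(0,1)$ may contain $\theta=\tfrac12$, so ``compact subsets of the $\theta$-space'' is not quite the right exhaustion).

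The genuine gap is in what you declare to be ``prior input.'' The statement you need is the strong Kurzweil property for the specific family $\{T_{\theta}\}$ attached to the given $Q$, which is a one-parameter, measure-zero family inside the space of all IETs. Corollary \ref{strong kurz for IET}, or any ``analog'' of it, is a statement about $\bold{m}_d$-almost every IET and therefore gives no information about this null family; the paper says this explicitly (``Establishing the above corollary and Fubini's Theorem would not establish Theorem \ref{main theorem}, which holds for every translation surface''). In fact the logical order in the paper is the reverse of yours: Proposition \ref{skurz} is proved directly for each fixed $Q$ (via quadratic growth of saddle connections and Corollary \ref{discont2}, the separation Lemma \ref{separated 2}, and the measurability reduction of Lemma \ref{reduction}, all of which are statements about almost every \emph{direction} on the fixed surface), then Theorem \ref{main theorem} follows by the suspension argument, and only afterwards does Corollary \ref{strong kurz for IET} follow from the theorem by Fubini. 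So the input you assume is exactly the heart of the theorem, and the route you indicate for obtaining it cannot supply it. A secondary point: your reduction needs a transversal pre-image $y_0$ for \emph{every} $y\in Q$, but for any fixed admissible $\theta$ there are exceptional $y$ (points on a forward separatrix segment before it first meets $\bar v$, and the cone points themselves); excluding ``countably many directions for each $y$'' has the quantifiers in the wrong order, since the full-measure set of $\theta$ must be chosen before $y$. This requires a small additional argument, but it is minor compared with the missing Proposition \ref{skurz}.
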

Theorem \ref{main theorem} holds for every translation surface and therefore applies to the billiard in any fixed rational polygon. This is because following \cite{KZ} we may unfold a billiard table that is a rational polygon to reinterpret the billiard trajectories as straight line flows on a translation surface.
\begin{rem}\label{set} Theorem \ref{main theorem} says  that if $$S_{\theta}(f)=\left\{(x,y) \in Q \times Q: x \in \CLS F_{\theta}^{-t}\left(B(y,f(t))\right)\right\}$$ then for any fixed $f$ decreasing with divergent integral almost every $\theta$ has the property that
$$\LebF\left(\{x: (x,y_0) \in S_{\theta}(f)\}\right)=1 \text{ for every }y_0 \in Q.$$ Section 3 proves this and also Proposition \ref{other} which shows that,
 $$\LebF\left(\{y:(x_0,y) \in S_{\theta}(f) \}\right)=1  \text{ for every } x_0 \in Q.$$
It is easy to see that $S_{\theta}(f)$ is measurable.
\end{rem}
\begin{cor} \label{strong kurz for IET} Interval exchange transformations with irreducible permutations and measure $\bold{m}_d$ satisfy the strong Kurzweil property.
\end{cor}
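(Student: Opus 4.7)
My strategy is to derive the corollary from Theorem \ref{main theorem} via the classical suspension correspondence between IETs and translation surfaces. Fix an irreducible permutation $\pi$ on $\{1,\ldots,d\}$. Via the Masur-Veech zippered-rectangles construction, to each pair $(L,H) \in \Delta_d \times \mathbb{R}_+^d$ (with $H$ in an open cone of admissible height vectors) one associates a translation surface $Q(L,H,\pi)$ equipped with a horizontal transversal $\bar v$ whose first return map of the vertical flow is exactly $S_{L,\pi}$. Applying Theorem \ref{main theorem} to $Q(L,H,\pi)$ gives the strong Kurzweil property for $\LebP$-a.e. direction $\theta$.

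Next I would verify that the strong Kurzweil property for the flow $F_\theta^t$ on $Q(L,H,\pi)$ passes to the strong Kurzweil property for the first return IET $T_\theta$ on $\bar v$. Given a decreasing $\{a_i\}_{i=1}^{\infty}$ with divergent sum, set $f(t) = a_{\lfloor c t \rfloor}$, where $c$ is the asymptotic reciprocal return time (positive and finite for $\LebP$-a.e. $\theta$ by unique ergodicity). Then $\int_0^{\infty} f(t)\,dt$ diverges in tandem with $\sum a_i$. A ball $B(y, f(t))$ on $Q$ with $y \in \bar v$ intersects $\bar v$ in an interval of comparable radius about $y$. Using that for a.e. $x \in \bar v$ the return times $t_i(x)$ grow linearly with bounded oscillation, one can replace any $t$ realising $F_\theta^t(x) \in B(y, f(t))$ by a nearby return time $t_i(x)$ with $T_\theta^i(x) \in B_{\bar v}(y, C a_i)$ for a uniform constant $C$; the strong Kurzweil property of the flow then yields the corresponding statement for the IET (the constant $C$ being absorbed into the sequence by property (3) of standard sequences).

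Finally, I would transfer the full-measure conclusion from pairs $(L,\theta)$ to length vectors $L' \in \Delta_d$. The map $\Phi_H : (L, \theta) \mapsto L(T_\theta \text{ on } Q(L,H,\pi))$ restricts to the identity when $\theta$ is vertical, and its differential in $L$ at that point is the identity; by the inverse function theorem $\Phi_H$ is therefore a local diffeomorphism, hence absolutely continuous with respect to $\bold{m}_d$. Combining this with Fubini applied to the strong Kurzweil set of pairs $(L,\theta)$, one concludes that $\bold{m}_d$-a.e. $L'$ in a neighborhood of every $L$ satisfies strong Kurzweil; covering $\Delta_d$ by countably many such neighborhoods (and repeating over the finitely many irreducible permutations $\pi$) completes the argument.

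The main technical obstacle lies in the second step: one has to verify that bounded return times and linear return-time growth hold off a $\LebP$-null set of exceptional $\theta$ (Keane-type and saddle-connection directions), and that balls in $Q$ centered on $\bar v$ truly intersect $\bar v$ in intervals of comparable radius (which can fail near the endpoints of $\bar v$, forcing one to restrict attention to the open interior or to the complement of a small neighborhood of the endpoints). Both points are standard for translation-surface IETs but are the essential content hidden behind the phrase ``the strong Kurzweil property for the flow passes to its first return.''
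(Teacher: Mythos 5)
Your overall shape (suspension plus a Fubini/change-of-variables step) does match the skeleton of the paper's one-line deduction, but your route passes through Theorem \ref{main theorem} and then back down to the section maps, and this descent is where the main gap sits. The paper never makes that descent: Proposition \ref{skurz} is already a statement about the one-parameter families of first-return IETs $\{T_\theta\}$ on a fixed surface, Corollary \ref{strong kurz for IET} is obtained from it by Fubini, and Theorem \ref{main theorem} is deduced afterwards by going \emph{up} from the section map to the flow (as in Corollary \ref{kurz for flow}), not the other way. Concretely, Theorem \ref{main theorem} says that for every $y$ the set of good $x$ has full $\LebF$-measure in the two-dimensional surface $Q$; the transversal $\bar v$ is $\LebF$-null, so you cannot take $x\in\bar v$ and speak of ``any $t$ realising $F_\theta^t(x)\in B(y,f(t))$'' --- a priori no point of $\bar v$ need lie in the good set. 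Repairing this requires an extra argument, e.g.\ that $\CLS F_\theta^{-t}\left(B(y,f(t))\right)$ is forward-flow invariant (this uses monotonicity of $f$), then Fubini in flow boxes to find, for $\Leb$-a.e.\ $x\in\bar v$, a good point on a short forward flow segment of $x$, then a time shift absorbed into the sequence. None of this appears in your write-up, and the obstacles you do flag (bounded return times, how balls meet $\bar v$) are not this one.

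Your third step is also not correct as stated. The map $\Phi_H$ goes from the $d$-dimensional parameter space $\Delta_d\times(0,1)$ to the $(d-1)$-dimensional simplex, so it cannot be a local diffeomorphism and the inverse function theorem does not apply in the form you invoke. What is needed is, for instance, that for each fixed $\theta$ near the vertical the map $L\mapsto\Phi_H(L,\theta)$ is nonsingular onto a neighborhood in $\Delta_d$ (so that null sets pull back to null sets), together with measurability of the good set, so that a positive-measure bad set of IETs would pull back to a positive-measure set of pairs $(L,\theta)$ and hence meet the full-measure set supplied by the family statement --- a contradiction. Even this needs care: for $\theta$ off the vertical the first-return map of $F_\theta^t$ to $\bar v$ is in general not a $d$-IET with permutation $\pi$ (extra discontinuities arise from the orbits of the endpoints of $\bar v$ and of the singularities), so the assertion that $\Phi_H(\cdot,\theta)$ equals the identity at the vertical and varies smoothly nearby with values in the same simplex is precisely the point that must be justified. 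That justification is the content hidden behind the paper's appeal to Fubini, and it is missing from your argument.
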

Establishing the above corollary and Fubini's Theorem would not establish Theorem \ref{main theorem}, which holds for every translation surface.
\begin{rem} Corollary \ref{strong kurz for IET} strengthens Theorem \ref{AU}. Given any standard $\{a_i\}_{i=1}^{\infty}$, almost every IET has the property that ${\Leb\left(\LS T^{-i}(B(y,a_i))\right)=1}$  simultaneously for all $y$.
\end{rem}
These results state that IETs satisfy strong shrinking target properties, however this is not the complete picture.
\begin{thm} \label{rigidthm} For almost every IET $T$, there exists a standard sequence ${\bold{a}_T:=\{a_i\}_{i=1}^{\infty}}$ such that $$\Leb\left(\LS B(T^ix,a_i)\right)=0$$ for $\Leb$-almost every $x$.
\end{thm}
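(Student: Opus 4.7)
The plan is to exploit the rigidity of a typical IET. The required input is: for almost every IET $T$, Rauzy--Veech renormalization produces sequences $n_k \to \infty$, translations $\tau_k$ with $\tau_k \to 0$, and measurable sets $E_k \subset [0,1)$ with $\Leb(E_k) \to 1$, such that $T^{n_k} x = x + \tau_k$ for every $x \in E_k$. By passing to a sufficiently sparse subsequence and writing $m_k := n_{k+1}/n_k$, I arrange the \emph{effective} rates $m_k \geq k^2$, $|\tau_k| \leq 1/(k^2 n_{k+1}^2)$, and $\Leb([0,1)\setminus E_k) \leq 1/(m_k k^2)$. Establishing this quantitative rigidity is the technical heart of the proof; it relies on ergodicity of Rauzy--Veech renormalization and integrability of the return-time cocycle.

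Given the rigidity data, define $\mathbf{a}_T = \{a_i\}$ by $a_i := 1/n_{k+1}$ for $n_k \leq i < n_{k+1}$. The sequence is non-increasing, and
\[
\sum_{i = n_k}^{n_{k+1}-1} a_i = \frac{n_{k+1} - n_k}{n_{k+1}} \longrightarrow 1,
\]
so $\sum a_i = \infty$ and $\mathbf{a}_T$ is standard. It remains to show $\Leb\bigl(\LS B(T^i x, a_i)\bigr) = 0$ for $\Leb$-a.e.\ $x$.

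Group indices into blocks $B_k := [n_k, n_{k+1})$, so that $\LS B(T^i x, a_i) \subset \limsup_k \bigcup_{i \in B_k} B(T^i x, a_i)$. Let $E'_k := \{x : x + m\tau_k \in E_k \text{ for all } 0 \leq m < m_k\}$; translation-invariance of $\Leb$ gives $\Leb([0,1)\setminus E'_k) \leq m_k\,\Leb([0,1)\setminus E_k) \leq 1/k^2$. Let $F_k$ be the set of $x$ for which some $T^j$ with $j<n_k$ has a discontinuity within $m_k|\tau_k|$ of $x$; using that $T^j$ has $O(j)$ discontinuities, $\Leb(F_k) = O(n_{k+1}^2 |\tau_k|) = O(1/k^2)$. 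For $x \in E'_k \setminus F_k$, induction on rigidity yields $T^{m n_k} x = x + m \tau_k$ for $0 \leq m < m_k$, and each $T^j$ with $j<n_k$ is a local translation on $[x,x+m\tau_k]$, so for $i = j + m n_k \in B_k$ we obtain $|T^i x - T^j x| = m|\tau_k| \leq m_k|\tau_k|$, and hence
\[
\bigcup_{i \in B_k} B(T^i x, a_i) \subset \bigcup_{0 \leq j < n_k} B\bigl(T^j x,\, \tfrac{1}{n_{k+1}} + m_k|\tau_k|\bigr),
\]
a set of Lebesgue measure at most $2n_k\bigl(\tfrac{1}{n_{k+1}} + m_k|\tau_k|\bigr) = 2/m_k + 2n_{k+1}|\tau_k| \leq 4/k^2$. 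Summing over $k$ and applying the first Borel--Cantelli lemma yields $\Leb\bigl(\LS B(T^i x, a_i)\bigr) = 0$ for every $x$ in $\liminf_k(E'_k \setminus F_k)$, a full-measure set by summability of $\Leb((E'_k \setminus F_k)^c)$.

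The main obstacle is the first paragraph. Rigidity alone is not sufficient: one needs rigidity with simultaneous quantitative control on the translation size $|\tau_k|$, the measure of the rigid set $E_k$, and the gap ratio $m_k$ to the next rigidity time, all realized on one single subsequence. Once this is granted, the remainder is a straightforward overlap / block-Borel--Cantelli estimate as above.
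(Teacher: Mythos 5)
The covering/Borel--Cantelli computation in your last two paragraphs is sound and is essentially the same block argument the paper uses, but the ``required input'' of your first paragraph is a genuine gap --- and not one that can be filled, because as stated it is false for almost every IET. Already for $d=2$ (rotations, where $E_k=[0,1)$ and $\tau_k$ is the signed representative of $n_k\alpha$ mod $1$, so $|\tau_k|=\|n_k\alpha\|$), your constraints $|\tau_k|\le 1/(k^2n_{k+1}^2)$ and $n_{k+1}\ge k^2n_k$ force $\|n_k\alpha\|\le \tfrac{1}{k^6 n_k^2}$; since $\|n\alpha\|\ge \tfrac{1}{2q_{j+1}}$ whenever $q_j\le n<q_{j+1}$, this forces $q_{j+1}\ge \tfrac{k^6}{2}q_j^2$ for infinitely many $j$, while for Lebesgue-a.e.\ $\alpha$ one has $q_{j+1}\le q_j^2$ for all large $j$ (Borel--Cantelli for the Gauss measure). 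Even the weaker bounds your argument actually needs --- summability of $n_kn_{k+1}|\tau_k|$ (for $\Leb(F_k)$) and of $m_k\Leb(E_k^c)$ --- fail for the same structural reason: they tie the displacement and the error set at time $n_k$ to the \emph{next} rigidity time $n_{k+1}$. Veech's rank-one/rigidity theorem, which is the input the paper uses, gives for each $\epsilon$ a tower of \emph{some} height $N$ with $\Leb(E^c)\lesssim\epsilon$ and $|\tau|\le\epsilon\,\Leb(J)\le\epsilon/N$; you may choose $\epsilon$ after seeing the previous stage, but you have no control over how large the next admissible $N$ will be, so no ``sufficiently sparse subsequence'' can arrange a condition of the form $|\tau_k|\ll\psi(n_{k+1})$.

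The repair is exactly the restructuring the paper carries out. First, decouple the block length from the gap between rigidity times: with $\epsilon_j=3^{-j}$ and tower height $N_j$, take $a_i=\tfrac{1}{2^jN_j}$ on the block $[2^{j-1}N_{j-1},2^jN_j)$, so the number of repetitions $2^j$ of the rigidity time is chosen from $\epsilon_j$ alone and never refers to $N_{j+1}$; each block still contributes about $1$ to $\sum a_i$, so the sequence is standard however sparse the $N_j$ are. Second, do not delete neighborhoods of discontinuities (your $F_k$, which costs the extra factor $n_k$ that creates the impossible demand $|\tau_k|\ll 1/(n_kn_{k+1})$); instead work inside the Rokhlin tower over $J_j\cap T^{-N_j}J_j\cap\cdots\cap T^{-2^jN_j}J_j$. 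For $x$ in that tower, $T^{l+mN_j}x$ lies within $\tfrac{m}{N_j3^j}$ of $T^lx$, so the block's balls are covered by $N_j$ balls of radius $\tfrac{1}{2^jN_j}+\tfrac{2^j}{3^jN_j}$, of total measure $O(2^{-j}+(2/3)^j)$, while the exceptional set has measure at most $(2^j+1)3^{-j}$ --- no factor of $N_j$, no reference to $N_{j+1}$. With those two changes your concluding Borel--Cantelli step goes through verbatim and yields the theorem.
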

That is, almost every IET does not satisfy MSTP.
This result is a little deceptive because
\begin{thm}\label{full measure thm}  There exists a full measure set of IETs $\mathcal{V}$ such that for any standard sequence $\bold{a}$  where $ia_i$ is eventually monotone $$\Leb\left(\LS B(T^ix,a_i) \right)=1$$ for any $T \in \mathcal{V}$ and for every $x$.
\end{thm}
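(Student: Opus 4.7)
\emph{Proof proposal.} The plan is to split the argument according to whether $\{ia_i\}$ is eventually non-decreasing or eventually non-increasing; since $\{a_i\}$ is non-increasing, these two cases exhaust the hypothesis of eventual monotonicity.

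\emph{In the eventually non-decreasing case}, eventual monotonicity of $ia_i$ combined with positivity forces $ia_i \geq c := Na_N>0$ for all $i \geq N$, hence $a_i \geq c/i$ eventually. By monotonicity of the limsup set in the radii, $\LS B(T^ix,a_i) \supseteq \LS B(T^ix,c/i)$, so the claim reduces to the countable family of sequences $\{\tfrac{1}{ki}\}_{k\in \mathbb{N}}$. For each such sequence Theorem~\ref{start} produces a full-measure set of ergodic IETs on which the limsup has full Lebesgue measure for $\Leb$-a.e.\ $x$, and we take the countable intersection.

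\emph{In the eventually non-increasing case}, I would first reduce to a countable family of sequences using Cauchy condensation: divergence of $\sum a_i$ is equivalent to divergence of $\sum 2^k a_{2^k}$, so the dyadic staircase $b_i := a_{2^{k+1}}$ for $i \in [2^k,2^{k+1})$ still lies below $a_i$ and is still standard. A further dyadic discretization rounding each $a_{2^k}$ down to a power of $2$ produces sequences parametrized by non-decreasing integer sequences $\{m_k\}$ with $\sum 2^{k-m_k} = \infty$. A diagonal exhaustion then isolates a countable sub-collection such that every admissible $\bold{a}$ is dominated from below -- on a subset of indices of divergent sum -- by some member of the sub-collection. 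Applying Corollary~\ref{strong kurz for IET}, or rather the ``every-$x_0$'' analog of it (parallel to Proposition~\ref{other} for flows), to each member of the sub-collection and intersecting the resulting full-measure sets of IETs yields the desired $\mathcal{V}$ in this case.

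\emph{The main obstacle} is the ``for every $x$'' clause. Standard applications of Fubini to Corollary~\ref{strong kurz for IET} give only ``for $\Leb$-a.e.\ $x$'', and in Theorem~\ref{start} the conclusion is similarly ``for $\mu$-a.e.\ $x$''. I expect to close this gap by invoking an IET analog of Proposition~\ref{other}, whose proof would exploit the Rauzy-Veech tower structure valid for almost every IET: at each stage of the induction \emph{every} point (not just a.e.\ point) lies in a specific level of a specific tower whose width and height are quantitatively controlled, and the orbit $\{T^ix\}$ sweeps through these levels in a prescribed way, producing Borel-Cantelli estimates that hold pointwise rather than merely almost everywhere. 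The secondary challenge is executing the countable reduction in the non-increasing case in a way that genuinely uses the $ia_i$-non-increasing hypothesis rather than merely the divergence of $\sum a_i$.
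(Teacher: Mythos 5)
The decisive gap is the countable reduction you propose in the eventually non-increasing case. No countable family $\{\mathbf{c}^{(n)}\}_{n=1}^{\infty}$ of standard sequences can be coinitial under eventual termwise domination, even inside the restricted class you are treating: given any such family, set $a_i=f(i)/i$ where $f$ is a non-increasing step function which, at a sparse sequence of times $N_k$, drops to $\tfrac 1 2\min_{n\le k} N_k\,c^{(n)}_{N_k}$ (and below its previous value) and then stays constant on $[N_k,N_{k+1})$ with $N_{k+1}$ chosen so large that $\sum_{i=N_k}^{N_{k+1}-1} f(i)/i\ge 1$. Then $\{a_i\}$ is non-increasing, $ia_i=f(i)$ is non-increasing, $\sum a_i=\infty$, and yet for every $n$ we have $a_{N_k}<c^{(n)}_{N_k}$ for all $k\ge n$, so $\mathbf{a}$ eventually dominates no member of your family. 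Your fallback, domination only on a subset $S$ of indices with $\sum_{i\in S}c_i=\infty$, does not rescue the argument: Corollary \ref{strong kurz for IET} applied to $\mathbf{c}$ only produces infinitely many $i$ with $y\in B(T^ix,c_i)$, with no control on whether those $i$ lie in $S$, so nothing transfers to $\mathbf{a}$. Forcing hits along a prescribed index set of positive density is precisely the quantitative statement you would still have to prove, and it is also exactly where the hypothesis on $ia_i$ must act, since by Theorem \ref{rigidthm} the conclusion is false for general standard sequences; note that the counterexamples of Section \ref{rigid} are block-constant staircases of the very shape your dyadic discretization produces.

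The paper does not reduce to a countable family at all. It isolates a property of the IET alone: for some $r,c,e$ and a set of $k$ of positive lower density, at least $cr^k$ of the points $T^{r^k}x,\dots,T^{r^{k+1}}x$ are $\frac{e}{r^k}$-separated, for every $x$. Proposition \ref{step 1} shows that this single property yields $\Leb\left(\LS B(T^ix,a_i)\right)=1$ simultaneously for all $2$-standard sequences and every $x$: the monotonicity of $ia_i$ enters only through the fact that $\sum_{k\in S}r^{k}a_{r^{k+1}}=\infty$ for sets $S$ of positive lower density, and the ``every $x$'' clause comes for free because the separation hypothesis is uniform in $x$ and ergodicity upgrades positive measure to full measure pointwise (Proposition \ref{LS reduction}). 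That almost every IET has this property is then proved by Rauzy--Veech induction and Kerckhoff's balancedness estimates (Lemmas \ref{sep by tower} and \ref{good is sep}, Proposition \ref{step 2}). By contrast, your eventually non-decreasing case is essentially sound: there $a_i\ge c/i$ eventually, the comparison family $\{1/(ki)\}_{k\in\mathbb{N}}$ is genuinely countable, and the ``every $x$'' issue for each fixed comparison sequence can be closed by the almost-every-IET version of Proposition \ref{other} (obtained by the same Fubini argument that yields Corollary \ref{strong kurz for IET}) rather than by a new tower argument. But in the main, non-increasing case the countable-intersection strategy cannot be completed; the uniform, sequence-independent separation criterion is the missing idea.
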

The condition on sequences in this theorem is common and appears, for example, in Theorem \ref{march} and earlier in \cite[Theorem 32]{Khinchin}. 
 One way to think of Theorem \ref{full measure thm} is that it says that for almost every IET the standard sequences such that $\Leb\left(\LS B\left(T^ix,a_i\right)\right)=0$ for some $x$ violate a mild regularity condition.
\begin{rem} For rotations there is a necessary and sufficient condition: $\Leb\left(\LS B(R_{\alpha}^ix,a_i)\right)=1$ for every $x$ and any standard sequence $\{a_i\}_{i=1}^{\infty}$ where $ia_i$ is eventually monotone if and only if $\underset{n \to \infty}{\limsup} \frac {\log (q_n(\alpha))}{n}< \infty$  where $q_n(\alpha)$ is the denominator of the $n^{th}$ convergent of $\alpha$. This set excludes all Louiville $\alpha$, however it also excludes some $\alpha$ that are of Roth type. Recall that $\alpha$ is said to be of Roth type if for any $\epsilon>0$ we have $\min\{|R_{\alpha}^n(0)-0|, |R_{\alpha}^n(0)-1|\}>n^{-1-\epsilon}$ for all but finitely many $n$. Almost every $\alpha$ is of Roth type. The proof of the sufficiency of this condition is a straightforward application of Section \ref{full sec}. To show the necessity assume $\alpha $ does not have this form and use the target $a_j=\frac{1}{j\log(N_i)}$ for all $N_{i-1}\leq j<N_i$ for a sequence of  $N_i$ chosen similarly to Section \ref{rigid}. 
\end{rem}

The plan for this paper is to first establish the Kurzweil property for IETs and flows on flat surfaces. Then we establish the strong Kurzweil property (Theorem \ref{main theorem}). Then we show that almost every IET fails MSTP (Theorem \ref{rigidthm}), which is a straightforward application of Veech's proof that almost every IET is rigid. Then we use Rauzy-Veech induction to show Theorem \ref{full measure thm}.

\section{Proof of the Kurzweil property}
The main results of this section, Proposition \ref{kurz for iet} and Corollary \ref{kurz for flow}, establish the Kurzweil property for flat surfaces. The proof of the strong Kurzweil property in the next section is a little more complicated but mainly follows the lines of this proof.

\begin{thm} \label{kms}(Kerkchoff, Masur and Smillie \cite{KMS}) Let $Q$ be a translation surface then for almost every $\theta$, $F_{\theta}^t$ and $T_{\theta}$ are uniquely ergodic with respect to $\LebF$ and $\Leb$ respectively.
\end{thm}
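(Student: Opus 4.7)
The plan is to reduce the statement for flows on $Q$ to the statement for the associated family of IETs $T_\theta$ on the horizontal cross-section $\bar v$, and then prove unique ergodicity for almost every $T_\theta$ via Teichm\"uller dynamics. More precisely, the directional flow $F_\theta^t$ is uniquely ergodic if and only if its first-return map to a transverse segment, which (up to a zero-measure set of $\theta$ corresponding to saddle connections) is the IET $T_\theta$, is uniquely ergodic. So it suffices to prove the conclusion for $T_\theta$.

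First, I would rotate the surface: let $r_{-\theta}Q$ denote $Q$ rotated by $-2\pi\theta$, so that the vertical flow on $r_{-\theta}Q$ is conjugate to $F_\theta^t$ on $Q$. The key input is Masur's criterion for unique ergodicity, which says that if the forward Teichm\"uller geodesic $g_t(r_{-\theta}Q)$ does not eventually exit every compact subset of the stratum of abelian differentials, then the vertical flow on $r_{-\theta}Q$ is uniquely ergodic. Thus the problem reduces to showing that the set of $\theta\in[0,1)$ for which $g_t(r_{-\theta}Q)$ is divergent in moduli space has Lebesgue measure zero.

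To handle this, I would use the $SL(2,\mathbb R)$-invariant Masur--Veech probability measure $\mu_{MV}$ on the relevant connected component of the stratum containing $Q$. By Birkhoff's ergodic theorem applied to the Teichm\"uller flow $g_t$, $\mu_{MV}$-almost every surface has a recurrent forward orbit, so the $\mu_{MV}$-measure of the set of divergent surfaces is zero. The serious obstacle is that the theorem asserts a result for \emph{every} fixed surface $Q$ and almost every direction $\theta$, not just for $\mu_{MV}$-almost every $Q$. The way around this is to exploit the $SL(2,\mathbb R)$-equivariance: the circle $\{r_{-\theta}Q:\theta\in[0,1)\}$ is a piece of an $SO(2)$-orbit, and using the (horospherical) disintegration of $\mu_{MV}$ along rotation orbits combined with a transversality/covering argument, one shows that the divergent directions on \emph{any} fixed circle $\{r_{-\theta}Q\}$ form a Lebesgue-null set. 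Alternatively, one can bypass Teichm\"uller dynamics entirely and run the original KMS combinatorial argument, parametrizing $T_\theta$ by $\theta$, applying Rauzy--Veech induction to this family, and showing directly that the Lebesgue measure of the set of $\theta$ for which the induction produces badly distorted length vectors is zero. The hard part in either approach is this last non-divergence estimate for every fixed $Q$, since one cannot simply invoke generic recurrence on moduli space; a careful measure-theoretic argument controlling how rotations of a single Veech surface intersect compact sets in the stratum is essential.

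Finally, once unique ergodicity of $T_\theta$ is established for almost every $\theta$, I would transfer the conclusion back to the flow $F_\theta^t$ on $Q$ by noting that an invariant measure for $F_\theta^t$ induces (by integrating along orbits until return to $\bar v$) an invariant measure for $T_\theta$, so uniqueness for the IET gives uniqueness for the flow up to scaling; normalizing by $\nu_Q$ gives the stated conclusion with $\nu_Q$ and $\lambda$ as the respective invariant measures.
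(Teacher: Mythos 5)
First, note that the paper does not prove this statement at all: Theorem \ref{kms} is quoted as an external input, the Kerckhoff--Masur--Smillie theorem \cite{KMS}, so there is no internal argument to compare yours against; what has to be judged is whether your sketch would actually constitute a proof of KMS. Your reductions at the two ends are fine: passing between unique ergodicity of $F_\theta^t$ and of the first-return IET $T_\theta$ on a transversal (discarding the countably many directions of saddle connections) is standard, and invoking Masur's criterion to convert the problem into showing that for the fixed surface $Q$ the set of $\theta$ with divergent Teichm\"uller geodesic $g_t(r_{-\theta}Q)$ is Lebesgue-null is a legitimate modern route.

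The genuine gap is the step you flag and then wave away: you cannot get the ``for every $Q$, almost every $\theta$'' non-divergence statement from recurrence of $\mu_{MV}$-almost every surface by disintegrating the Masur--Veech measure along $SO(2)$-orbits. The circle $\{r_{-\theta}Q:\theta\in[0,1)\}$ of the one fixed surface you care about is a $\mu_{MV}$-null set, and a disintegration or Fubini argument only yields the conclusion for almost every rotation orbit, i.e.\ for almost every $Q$ in the stratum --- which is exactly Masur's and Veech's earlier almost-everywhere result, not KMS. Upgrading ``almost every surface'' to ``every surface'' is the entire content of the theorem, and it requires a genuinely different mechanism: either the original KMS geometric/inductive analysis of the directions in which short saddle connections or thin cylinders develop, or, in modern language, a quantitative non-divergence estimate for circle/horocycle pushes of an arbitrary fixed surface. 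Your alternative suggestion (``run the original KMS combinatorial argument'') simply names the missing proof rather than supplying it, and the phrase ``rotations of a single Veech surface'' is also a slip, since $Q$ is an arbitrary translation surface, not a lattice surface. So as it stands the proposal reduces the theorem correctly but leaves its essential difficulty unproved.
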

This allows us to make the following reduction which is a straightforward application of ergodicity whose proof is included for completeness.
\begin{prop} \label{LS reduction} If $T$ is a $\Leb$-ergodic IET, $\{a_i\}_{i=1}^{\infty}$ is non-increasing and  $x$ has $${\lambda \left(\underset{n=1}{\overset{\infty}{\cap}} \underset{i=n}{\overset{\infty}{\cup}} B(T^i x, a_i)\right)>0} \text{ then }\Leb\left(\LS B(T^ix,a_i)\right)=1.$$ If a positive measure set of $x$ have $${\Leb\left(\LS B(T^ix,a_i)\right)=1} \text{ then }{\Leb\left(\LS B(T^ix,a_i)\right)=1}$$ for $\Leb$-almost every $x$. If $$\Leb\left(\LS T^{-i}(B(y,a_i))\right)>0 \text{ then }\Leb\left(\LS T^{-i}(B(y,a_i))\right)=1.$$ If a positive measure set of $y$ have $${\Leb\left(\LS T^{-i}(B(y,a_i))\right)=1}\text{ then }{\Leb\left(\LS T^{-i}(B(y,a_i))\right)=1}$$
for $\Leb$-almost every $y$.\end{prop}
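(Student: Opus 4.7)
The plan is to prove all four assertions as zero-one laws flowing from the ergodicity of $T$, by showing that each relevant set (or function) is $T$-invariant modulo a null set. The arguments are driven by index-shifting together with the monotonicity of $\{a_i\}$.

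For statement 1, fix $x$ and set $A = \LS B(T^ix, a_i)$. Assume without loss of generality that $a_i \to 0$ (otherwise Birkhoff's theorem applied to a fixed ball of radius $\epsilon$ already forces $\Leb(A)=1$ for a.e.\ $x$). Since $T$ is a piecewise translation with finitely many discontinuities, for Lebesgue-a.e.\ $y$ there is an $N(y)$ such that for all $i \geq N(y)$ the ball $B(y,a_i)$ lies inside a single continuity interval of $T^{-1}$. For such $i$ the bound $|T^ix - y| < a_i$ forces $T^ix$ to sit in that same continuity interval, so the piecewise-translation structure gives $|T^{i-1}x - T^{-1}y| = |T^ix - y| < a_i \leq a_{i-1}$. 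Thus $y \in A$ implies $T^{-1}y \in A$ for a.e.\ $y$, so $T^{-1}A \subseteq A$ modulo null. Measure preservation upgrades this to equality, and ergodicity yields $\Leb(A) \in \{0,1\}$. Statement 3 is handled analogously but more cleanly: the identity
\[
T^{-1}\!\left(\LS T^{-i}B(y,a_i)\right) = \bigcap_{n \geq 1}\bigcup_{k \geq n+1} T^{-k}B(y,a_{k-1})
\]
already contains $\LS T^{-i}B(y,a_i)$ purely because $a_{k-1}\geq a_k$, without any need to analyze discontinuities.

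For statements 2 and 4, apply the same index-shifts at the function level. Setting $F(x) := \Leb(\LS B(T^ix,a_i))$ and $G(y) := \Leb(\LS T^{-i}B(y,a_i))$, the shift arguments of the previous paragraph yield $F(Tx) \geq F(x)$ and $G(Ty) \leq G(y)$ almost everywhere, in each case from the monotonicity $a_{i-1}\geq a_i$. Combined with $\int F\circ T\, d\Leb = \int F\, d\Leb$ (and likewise for $G$) from $T$ being measure-preserving, these inequalities force $F \circ T = F$ and $G \circ T = G$ almost everywhere. Ergodicity then makes both $F$ and $G$ a.e.\ constant, and a positive-measure level set at value $1$ pins the constant to $1$.

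The only real technical subtlety is the discontinuity bookkeeping used in statement 1; the other three parts reduce purely to monotonicity of $\{a_i\}$. This obstacle is handled by the fact that the discontinuities of $T$ and $T^{-1}$ form a finite set, so for Lebesgue-a.e.\ $y$ the radii $a_i$ eventually fall below the distance from $y$ to this set, placing the relevant balls inside a single continuity interval where $T^{-1}$ acts as a pure translation. Everything else is a direct consequence of ergodicity and measure preservation.
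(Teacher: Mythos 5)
Your core argument is correct and is essentially the paper's proof: the paper works with $G=\{(x,y):y\in\LS B(T^ix,a_i)\}$ and notes that $(x,y)\in G$ implies $(Tx,y)\in G$ (monotonicity of $\{a_i\}$) and, for $y$ away from the discontinuity orbits, $(x,y)\in G$ implies $(x,T^{-1}y)\in G$ (piecewise isometry), then finishes with measure preservation and ergodicity --- exactly your invariance-plus-zero-one-law scheme, including the discontinuity bookkeeping for the backward shift. One small inaccuracy in your closing summary: the fourth assertion, $G(Ty)\le G(y)$, is not ``purely monotonicity'' either, since the centers move from $y$ to $Ty$; it needs the same a.e.-$y$ continuity argument as assertion 1 (which your phrase ``the shift arguments of the previous paragraph'' does cover, so this is only a matter of the summary sentence).

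The genuine weak point is the reduction ``assume WLOG $a_i\to0$.'' If $\inf_i a_i=c>0$, your fallback --- Birkhoff applied to a ball of fixed radius --- gives $\Leb\left(\LS B(T^ix,a_i)\right)=1$ only for \emph{almost every} $x$, whereas the first assertion concerns the \emph{specific} $x$ assumed to satisfy $\Leb\left(\LS B(T^ix,a_i)\right)>0$; Birkhoff says nothing about whether this particular orbit enters $B(y,c)$ infinitely often for a.e.\ $y$. (The bounded-below case is harmless elsewhere: for the third assertion, for \emph{every} $y$ one has $\LS T^{-i}B(y,a_i)\supseteq\{x: T^ix\in B(y,c)\text{ i.o.}\}$, which has full measure by ergodicity, and the second and fourth assertions are a.e.\ statements, so your fallback does cover them.) Note also that your own $N(y)$ construction genuinely requires $a_i$ to drop below $d\left(y,\mathrm{disc}(T^{-1})\right)$, so the case split cannot simply be dropped; to close the first assertion when $a_i\not\to0$ you need an input about that specific orbit, e.g.\ that a Lebesgue-ergodic IET is minimal (every forward orbit is dense, whence $\LS B(T^ix,c)$ is everything), which follows from the decomposition of an IET into finitely many periodic and minimal components, or else restrict the statement to sequences tending to $0$, which is all that is used later in the paper. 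With that one case repaired, your proof is complete and matches the paper's.
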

\begin{proof} Consider the measurable set $G=\{(x,y): y \in \LS B(T^ix,a_i)\}$. Because $\bold{a}$ is non-increasing if $(x,y) \in G$ then $(T(x),y) \in G$. Also, $T$ is a piecewise isometry, so for any $y$ outside the orbits of discontinuity points, $(x,y)  \in G$ implies $(x,T^{-1}(y)) \in G$. Therefore, by ergodicity if $$\Leb\left(\{y:(x_0,y) \in G\}\right) >0 \text{ then }\Leb\left(\{y: (x_0,y) \in G \}\right)=1.$$ Also $$\Leb\left( \{x:(x,y_0) \in G\}\right)>0 \text{ implies }\Leb\left( \{x:(x,y_0) \in G\}\right)=1.$$\end{proof}
This implies that it suffices to show that for almost every $\theta$ (those such that $F_{\theta}^t$ and $T_{\theta}$ are uniquely ergodic), $\Leb (\LS B(T_{\theta}^ix,a_i))>0$. To establish this property we use the following result.
\begin{lem} \label{separated} Let $\epsilon>0,e>0$ and $n,t \in \mathbb{N}$. If $\{z_1,...,z_n\}\subset \mathbb{R}$ are $\frac e n$ separated and $S\subset \mathbb{R}$ is a set of measure $\epsilon$ that is the union of $t$ intervals  then  the inequality $$\Leb \left(\underset{i=1}{\overset{n}{\cup}}B(z_i,\delta) \backslash S \right) > (n-2t-\frac{n \epsilon}{e} )\delta $$ holds for any $\delta< \frac e {2n}$.
\end{lem}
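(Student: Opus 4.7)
My plan is to exploit the separation of the centers to make the balls $B(z_i,\delta)$ pairwise disjoint, then control how much of their total mass can lie inside $S$ by counting how many of the centers can be in (a small neighborhood of) $S$. The key observation is that because $\delta<e/(2n)$, each ball has diameter less than $e/n$, and the centers are $e/n$-separated, so the balls are disjoint. Consequently
\[
\Leb\Big(\bigcup_{i=1}^n B(z_i,\delta)\setminus S\Big)
= 2n\delta - \sum_{i=1}^n \Leb\big(B(z_i,\delta)\cap S\big),
\]
and our job reduces to giving an appropriate upper bound on the sum on the right.

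A center $z_i$ contributes to the sum only if it lies in the $\delta$-thickening $S_\delta:=\{x:d(x,S)\leq\delta\}$, and in that case it contributes at most $2\delta$. Because $S$ is a union of $t$ intervals, $S_\delta$ is a union of at most $t$ intervals (thickening can only merge components, never split them) of total length at most $\epsilon+2t\delta$. I will then use a packing estimate: a union of $t$ intervals of total length $\ell$ can contain at most $n\ell/e+t$ points pairwise separated by at least $e/n$, since each component of length $L$ holds at most $Ln/e+1$ such points. Applied to $S_\delta$ this gives at most $n(\epsilon+2t\delta)/e+t$ centers inside $S_\delta$, and the hypothesis $\delta<e/(2n)$ turns the cross term $2nt\delta/e$ into something strictly less than $t$, so the count is bounded by $n\epsilon/e+2t$.

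Combining the two ingredients,
\[
\sum_{i=1}^n \Leb\big(B(z_i,\delta)\cap S\big)
\leq 2\delta\Big(\frac{n\epsilon}{e}+2t\Big),
\]
and plugging this into the first display yields
\[
\Leb\Big(\bigcup_{i=1}^n B(z_i,\delta)\setminus S\Big)
\geq 2\delta\Big(n-2t-\frac{n\epsilon}{e}\Big)
> \delta\Big(n-2t-\frac{n\epsilon}{e}\Big),
\]
the strict inequality being the desired claim whenever the right-hand side is positive, and being trivial otherwise since the left-hand side is nonnegative.

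There is no real obstacle here: once the disjointness of the balls and the packing count are in hand, everything else is arithmetic. The only place to be careful is the packing count for $S_\delta$, where one must correctly track both the thickened length $\epsilon+2t\delta$ and the $+t$ additive term from the number of components, and use $\delta<e/(2n)$ to absorb the nuisance cross term back into the $2t$ in the final estimate.
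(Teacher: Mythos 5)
Your proof is correct and follows essentially the same route as the paper's: both rest on the disjointness of the balls (separation plus $\delta<\frac{e}{2n}$) together with a packing count showing that at most $\frac{n\epsilon}{e}+2t$ of the $\frac{e}{n}$-separated centers can lie in a small neighborhood of $S$, so the remaining balls miss $S$ entirely. Your bookkeeping via the $\delta$-thickening $S_\delta$ and the overlap sum is just a mild repackaging of the paper's one-line count, so there is nothing substantive to add.
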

\begin{proof} At most $\frac{\epsilon}{e}+2t$ of the points can lie within a $\frac e {2n}$ neighborhood of $S$. This is because an interval of length $l$ can contain at most $\lceil \frac l e \rceil$ points that are $e$ separated. Therefore all but $\frac{\epsilon}{e}+2t$ of the points $\{z_1,z_2,...,z_n\}$ have $B(z_i, \delta) \cap S=\emptyset$ and the lemma follows.
\end{proof}
\begin{rem}\label{meaning of sep} The statement of this lemma is abstract. It is used in this paper to show that under suitable assumptions for some $\epsilon>0$ we have $\Leb\left(\underset{i=N}{\overset{\infty}{\cup}}B(T^ix, a_i)\right)>\epsilon$ for any $N$. This is done by showing that
 $$\Leb\left(\underset{i=r^k}{\overset{r^{k+1}}{\cup}}B(T^ix, a_i)\backslash \underset{i=N}{\overset{r^k}{\cup}}B(T^ix, a_i)\right)$$ is proportional to $r^ka_{r^{k+1}}$ when $ \Leb \left(\underset{i=N}{\overset{r^k}{\cup}}B(T^ix, a_i) \right)$ is small and $\{T^i(x)\}_{i=r^k}^{r^{k+1}}$ has many $\frac{e}{r^{k+1}}$ separated points.
\end{rem}
Motivated by this lemma we will assume $\underset{n \to \infty}{\lim} \, na_n=0$ and make the following definition to determine when we can apply Lemma \ref{separated}.
\begin{Bob} Let $e_T(n)$ be the smallest distance between discontinuities of $T^n$.
\end{Bob}
\begin{thm}(Boshernitzan) Let $Q$ be a polygon with quadratic growth of saddle connections, then $$\underset{\epsilon \to 0}{\lim} \quad \underset{n>0}{\sup} \quad \Leb\left(\{\theta: e_{T_{\theta}}(n)< \frac {\epsilon}{n}\}\right)=0.$$
\end{thm}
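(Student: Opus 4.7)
The plan is to establish the uniform bound $\Leb(\{\theta : e_{T_\theta}(n) < \epsilon/n\}) \leq C\epsilon$ for a constant $C$ independent of $n$; taking $\epsilon \to 0$ then gives the conclusion. I will associate to each bad direction a short saddle connection of $Q$ whose direction is close to $\theta$, and then count such saddle connections via the quadratic-growth hypothesis.

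For the reduction, take $x_1 \neq x_2$ to be two discontinuities of $T_\theta^n$ on $\bar v$ with $|x_1 - x_2| < \epsilon/n$. Each $x_i$ is the terminal point of a $\theta$-separatrix $\gamma_i$ of some length $L_i$ emanating from a singularity $p_i \in Q$. The concatenated path $\tau := \gamma_1 \cdot [x_1, x_2] \cdot \gamma_2^{-1}$ from $p_1$ to $p_2$ has holonomy
\[ \operatorname{hol}(\tau) = (L_2 - L_1) e_\theta + (x_2 - x_1) e_{\bar v}, \]
where $e_\theta = (\cos 2\pi\theta, \sin 2\pi\theta)$ and $e_{\bar v} = (1,0)$. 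A direct computation then yields $|\operatorname{hol}(\tau) \cdot e_\theta^\perp| = |x_2 - x_1| \cdot |\sin 2\pi\theta| \leq \epsilon/n$. Straightening $\tau$ on the translation surface, splitting at any singularity encountered by the straight-line representative, produces a genuine saddle connection $\sigma$ of length at most $|\operatorname{hol}(\tau)|$ in the direction of $\operatorname{hol}(\tau)$. In particular, $|\operatorname{hol}(\sigma) \cdot e_\theta^\perp| \leq \epsilon/n$ and, assuming the length bound $L_i \leq C_0 n$, one has $\ell(\sigma) \leq C_1 n$.

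For the count, fix a saddle connection $\sigma$ with holonomy $\vec w = \ell(\cos\phi, \sin\phi)$. The set $\{\theta \in [0,1) : |\vec w \cdot e_\theta^\perp| \leq \epsilon/n\}$ equals $\{\theta : \ell |\sin(\phi - 2\pi\theta)| \leq \epsilon/n\}$, a union of two arcs of total Lebesgue measure at most $C_2 \epsilon/(\ell n)$. Summing over all saddle connections of length at most $C_1 n$ and using $N(R) := \#\{\sigma : \ell(\sigma) \leq R\} \leq C_Q R^2$ together with a dyadic decomposition, which yields $\sum_{\ell(\sigma) \leq R} 1/\ell(\sigma) = O(R)$, one obtains
\[ \Leb(\{\theta : e_{T_\theta}(n) < \epsilon/n\}) \leq \sum_{\ell(\sigma) \leq C_1 n} \frac{C_2 \epsilon}{\ell(\sigma) n} = O(\epsilon), \]
independent of $n$, as required.

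The main obstacle is the length bound $L_i \leq C_0 n$. For an arbitrary $\theta$ the first-return time to $\bar v$ under $F_\theta^t$ is not uniformly bounded: for example when $|\sin 2\pi\theta|$ is small or $\theta$ is close to the direction of a short saddle connection, a separatrix crossing $\bar v$ at most $n$ times can have length far exceeding $C_0 n$. I plan to handle this by excising a set of $\theta$ of measure $O(\epsilon)$ on which the return-time function is anomalously large, using Chebyshev's inequality together with the Kac-type identity that the integral of the return-time function against $\Leb$ equals $\nu_Q(Q)$, and then verifying that on the complement the desired bound $L_i \leq C_0 n$ does hold.
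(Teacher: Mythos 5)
First, note that the paper does not actually prove this statement: it is quoted from Boshernitzan \cite[page 750]{dukeJ}, and the paper only records the key idea needed to transport the argument to translation surfaces, namely that $e_{T_{\theta}}(n)<\frac{\epsilon}{n}$ forces $\theta$ to lie within $O(\frac{\epsilon}{n^2})$ of the direction of a saddle connection that crosses the transversal at most $n$ times. Your overall strategy (bad direction $\Rightarrow$ nearby saddle connection, then count via quadratic growth, giving a bound $C\epsilon$ uniform in $n$) is the same strategy, but your execution has a genuine gap, and it is exactly the one you flag: the length bound $L_i\leq C_0 n$, i.e.\ the control of return times to $\bar v$, which is the crux of getting an estimate uniform in $n$. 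Your proposed repair via Chebyshev and the Kac identity is quantitatively insufficient. The mean return time in direction $\theta$ is of order $1/|\sin 2\pi\theta|$, so the set of $\theta$ on which the return time exceeds a threshold $C_0$ has measure of order $1/C_0$; to excise only $O(\epsilon)$ of directions you must take $C_0\asymp 1/\epsilon$, which inflates the admissible saddle-connection length to $R\asymp n/\epsilon$, and then your own dyadic count gives $\sum_{\ell(\sigma)\leq R}\frac{C_2\epsilon}{\ell(\sigma)n}=O\bigl(\epsilon\cdot\frac{R}{n}\bigr)=O(1)$, not $O(\epsilon)$. So the bound you obtain after the excision does not tend to $0$ with $\epsilon$, and the theorem is not proved.

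The fix is not to discard the factor $|\sin 2\pi\theta|$ that you yourself compute: the transverse width of the strip swept by $[x_1,x_2]$ is $|x_2-x_1|\,|\sin 2\pi\theta|\leq \frac{\epsilon}{n}|\sin 2\pi\theta|$, and it is precisely this extra factor that compensates the blow-up $L_i\lesssim n/|\sin 2\pi\theta|$ of the separatrix lengths near horizontal directions; equivalently, one should classify the saddle connections produced by the construction by their number of crossings of $\bar v$ (which is at most $n$, independently of $\theta$) rather than by their flat length, and this is exactly the formulation of the key idea the paper cites. With that bookkeeping the angular window around each relevant saddle connection is $O(\frac{\epsilon}{n^2})$-sized (in the regime where the connection has length comparable to $n$), and the quadratic-growth count closes the argument; this is Boshernitzan's proof, which the paper cites rather than reproves. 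Two smaller points: your ``straightening'' step is stated too loosely --- the geodesic representative of $\tau$ need not be a single segment in the direction of $\operatorname{hol}(\tau)$; the correct argument uses that the flow-strip over $(x_1,x_2)$ is embedded until a singularity is hit, so the straight segment joining the two singularities stays in a strip of the stated width (passing to a sub-segment if it meets another singularity). Also, discontinuities of $T_{\theta}^n$ can come from the endpoints of $\bar v$ as well as from singularities, which needs a word (e.g.\ treat the endpoints as marked points).
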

This appears in \cite[page 750]{dukeJ}. Recall that a translation surface is said to have quadratic growth of saddle connections if there exists a constant $C$ such that the number of saddle connections with length less than $L$ is smaller than $CL^2$ for all large $L$.
By repeating the arguments in the proof of this result in a translation surface one obtains the following corollary. The key idea in the proof is the fact that that $e_{T_{\theta}}(n)< \frac {\epsilon}{n}$ implies that $\theta$ is at most $O(\frac {\epsilon} {n^2})$ away from the direction of a saddle connection that crosses the transversal at most $n$ times.
\begin{cor}\label{discont2}  Let $Q$ be any translation surface with at most quadratic growth of saddle connections, then $$\underset{\epsilon \to 0}{\lim} \quad \underset{n>0}{\sup} \quad \Leb \left(\{\theta: e_{T_{\theta}}(n)< \frac {\epsilon}{n}\}\right)=0.$$
\end{cor}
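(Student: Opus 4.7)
The plan is to transcribe Boshernitzan's argument from \cite{dukeJ} to the translation surface setting, following the hint provided: quantify the fact that a small value of $e_{T_\theta}(n)$ forces $\theta$ to lie near the direction of a short saddle connection, then use the quadratic growth hypothesis to bound the measure by a union bound.

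First I would identify the discontinuities of $T_\theta^n$. The discontinuities of $T_\theta$ on $\bar v$ are the points where the forward trajectory under $F_\theta^t$ first hits a singularity of $Q$ (or an endpoint of $\bar v$ depending on conventions); equivalently, they are points of intersection of $\bar v$ with the (backward) separatrices. Discontinuities of $T_\theta^n$ are then points of $\bar v$ whose forward orbit (iterated by $T_\theta$, not by the flow) hits a singularity within $n$ returns. Each such point lies on a separatrix segment in direction $\theta$ of surface length $O(n / \sin\theta)$ (crossing $\bar v$ at most $n$ times). Thus if $e_{T_\theta}(n) < \epsilon/n$, there exist two separatrix segments in direction $\theta$, emanating from (possibly different) singularities, whose endpoints on $\bar v$ are within $\epsilon/n$ of each other.

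Next I would convert this near-coincidence into a genuine saddle connection by perturbing $\theta$. If two separatrix segments of length $L = O(n)$ starting at singularities $p,q$ in direction $\theta$ end at points $x,y \in \bar v$ with $|x-y| < \epsilon/n$, then rotating the direction by an angle of size $O(\epsilon/(nL)) = O(\epsilon/n^2)$ changes the endpoint on $\bar v$ by $O(L \cdot \epsilon/n^2) = O(\epsilon/n)$, and one can choose the rotation so that the two endpoints coincide. The concatenation of one separatrix with the time-reversal of the other then becomes a saddle connection from $p$ to $q$ of surface length $O(n)$. Hence every $\theta$ with $e_{T_\theta}(n) < \epsilon/n$ lies in a $C\epsilon/n^2$-neighborhood of the direction of some saddle connection of length $\leq C'n$.

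Finally I would apply the quadratic growth hypothesis: there are at most $C(C'n)^2 = O(n^2)$ saddle connections of length $\leq C'n$, each contributing an interval of directions of length $O(\epsilon/n^2)$. Summing gives
\[
\Leb\bigl(\{\theta : e_{T_\theta}(n) < \epsilon/n\}\bigr) \leq O(n^2) \cdot O(\epsilon/n^2) = O(\epsilon),
\]
uniformly in $n$, so letting $\epsilon \to 0$ yields the corollary. (One also needs to handle $\theta$ close to $0$ or $1$, where the factor $1/\sin\theta$ blows up; this contributes only a small set of $\theta$, which can be absorbed into the error by restricting to $\theta$ in a compact subinterval of $(0,1)$, or by noting that near the horizontal direction the first-return map is defined on a transversal whose geometry changes but the counting argument goes through with adjusted constants.)

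The main obstacle is the quantitative geometry in the second step: making precise the relationship between the length of a separatrix segment (measured on the surface), the number of times it crosses the transversal $\bar v$, and the angular perturbation needed to close it into a saddle connection. On a general translation surface one must work with a finite cover of $\bar v$ by arcs of $\theta$-values where the combinatorics of returns to $\bar v$ are controlled, and verify that the factor $1/\sin\theta$ (or its analogue measuring transversality of $\bar v$ with the flow direction) stays bounded on the complement of a small neighborhood of the directions parallel to $\bar v$. Once this is in place the union bound is routine and yields precisely the same conclusion as in the polygonal case.
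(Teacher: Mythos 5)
Your overall strategy is exactly the one the paper intends (it is Boshernitzan's argument transplanted to the surface): a small gap for $T_\theta^n$ forces $\theta$ close to the direction of a saddle connection crossing the transversal at most $n$ times, and quadratic growth then gives a union bound uniform in $n$. But the quantitative claim at the heart of your second step is false, and the final ``count times width'' bound inherits the error. You write that the needed angular perturbation is $O(\epsilon/(nL))=O(\epsilon/n^2)$; that substitution uses the \emph{upper} bound $L=O(n)$, whereas bounding the angle requires a \emph{lower} bound on $L$ (larger $L$ means smaller angle). The two separatrix segments through the nearby discontinuities may be short (the orbits may hit singularities after boundedly many returns), in which case the saddle connection you produce has bounded length $\ell$ and the window of directions around it in which the gap stays below $\epsilon/n$ has width comparable to $\epsilon/(n\ell)$, which is far larger than $\epsilon/n^2$. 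Concretely, on the square torus take $\theta$ with $\cot\theta$ at distance about $\epsilon/(4n)$ from $1/2$: then $e_{T_\theta}(n)<\epsilon/n$, yet for large $n$ this $\theta$ is not within $C\epsilon/n^2$ of the direction of any saddle connection of length at most $C'n$, since the nearest such direction (slope $1/2$) is at distance about $\epsilon/(4n)$ and all others are at distance at least of order $1/n$. So the set you must cover contains intervals of width about $\epsilon/n$ around directions of short saddle connections, and ``$O(n^2)$ connections times $O(\epsilon/n^2)$ each'' does not account for them.

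The repair is the length-dependent bookkeeping that the cited argument actually uses: if $e_{T_\theta}(n)<\epsilon/n$, then $\theta$ lies within $C\epsilon/(n\ell_\gamma)$ of the direction of a saddle connection $\gamma$ with at most $n$ crossings of $\bar v$, where $\ell_\gamma$ is its length. (To get this you must also make precise the closing-up step you gloss over: as $\theta$ rotates, the two hit points move with speeds proportional to the two separatrix lengths $L_1,L_2$, so the gap closes at rate comparable to $|L_1-L_2|$, and at the closing direction the two segments overlap along one line, leaving a saddle connection of length comparable to $|L_1-L_2|$; this difference is bounded below by the length of the shortest saddle connection, since the holonomy of the broken path is nonzero, so the widths are at worst $O(\epsilon/n)$.) Then sum dyadically over length scales using quadratic growth: connections with $\ell_\gamma\in[2^k,2^{k+1}]$, $2^k\le C'n$, number at most a constant times $2^{2k}$ and each contributes a window of width $O(\epsilon/(n2^k))$, so the total measure is $O(\epsilon/n)\cdot\sum_{2^k\le C'n}2^k=O(\epsilon)$, uniformly in $n$, which is the corollary. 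Your concluding remark about directions nearly parallel to $\bar v$ is fine as stated.
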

\begin{thm}(Masur) Flat surfaces have at most quadratic growth of saddle connections.
\end{thm}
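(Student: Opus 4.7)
The plan is to prove that for every translation surface $Q$ there is a constant $C(Q)$ such that the counting function $N(Q,L) := \#\{\text{saddle connections of length}\leq L\}$ satisfies $N(Q,L)\leq C(Q)\,L^2$ for all $L>0$. My strategy, following Masur, is to use the $SL(2,\mathbb{R})$ action on the stratum containing $Q$ to reduce this global growth estimate to a uniform local count of short saddle connections.

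First I would normalize $Q$ to have unit area and pass to the discrete holonomy set $\Lambda(Q)\subset\mathbb{R}^2$, so that counting saddle connections of length at most $L$ is the same (with bounded multiplicity depending on the number of singularities) as counting points of $\Lambda(Q)$ in the disk $B(0,L)$. The Teichm\"uller geodesic flow $g_t=\operatorname{diag}(e^t,e^{-t})$ acts linearly on holonomy vectors, so applying $g_{\log L}$ together with a rotation $r_\theta$ maps $B(0,L)$ to a bounded set of uniform diameter. Up to a standard angular averaging argument, bounding $N(Q,L)$ for all $L$ therefore reduces to bounding $N(Q',1)$ uniformly as $Q'$ ranges over surfaces in the $SL(2,\mathbb{R})$-orbit of $Q$.

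Second I would establish the uniform bound for $N(Q',1)$ when $Q'$ lies in the thick part of the stratum, by a ball-packing argument. If every saddle connection of $Q'$ has length at least $\epsilon$, then for each singular point $p$ the exponential map embeds a Euclidean disk of radius $\epsilon/2$ into $Q'$; two holonomy vectors of saddle connections from a fixed $p$ to a fixed $q$ that differ by less than $\epsilon$ would produce two isotopic geodesic segments in such a disk, which is impossible. Hence $\Lambda(Q')\cap B(0,1)$ is $\epsilon$-separated and has cardinality bounded by $C/\epsilon^2$, depending only on $\epsilon$ and the topology.

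The main obstacle is the thin part: surfaces with one or more very short saddle connections, where the preceding packing estimate degrades. The standard resolution is a structural argument: a short saddle connection $\sigma$ of length $\ell\ll 1$ is contained in a flat cylinder, or more generally a thin `collar,' of definite area; after excising a fixed neighborhood of $\sigma$ one is reduced to counting saddle connections either contained in the collar (an explicit one-dimensional count) or lying in a translation surface of lower complexity (or larger systole) to which the thick-part bound applies. Summing the finitely many such contributions and using that the total area is $1$ yields $N(Q,L)\leq C(Q)\,L^2$ with $C(Q)$ depending on the systole of $Q$ and the combinatorics of its short saddle connections.
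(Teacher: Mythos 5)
You should first note that the paper does not prove this theorem at all: it is quoted with a citation to Masur \cite{sad con}, and the text points to \cite{vor sad con} for an effective elementary proof, so your sketch has to stand on its own against those arguments. Your overall renormalization scheme (normalize the area, act by $g_t r_\theta$, and reduce counting in dyadic length ranges and angular sectors of size about $L^{-2}$ to counting saddle connections of bounded length on surfaces in the $SL(2,\mathbb{R})$-orbit of $Q$) is indeed the general shape of Masur's approach, and the reduction itself, including the remark that the multiplicity of a given holonomy vector is bounded by the number of outgoing separatrices, is fine. The two places where your outline has to produce an actual bound, however, both contain genuine gaps, and they are exactly where the substance of the theorem lies.

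In the thick part, the claim that $\Lambda(Q')\cap B(0,1)$ is $\epsilon$-separated does not follow from your argument: two saddle connections joining the same pair of singularities whose \emph{holonomy vectors} are within $\epsilon$ of each other need not be anywhere near each other on the surface, so they do not sit inside a common embedded disk of radius $\epsilon/2$ about a singularity, and no isotopy contradiction arises; distinct saddle connections can even have exactly equal holonomy (the two boundary edges of a cylinder), so separation of the set of holonomy vectors is simply the wrong statement. What the standard proofs use instead is the weaker fact that $N(X,1)$ is uniformly bounded on each $\epsilon$-thick (compact) part of the stratum, proved by compactness/semicontinuity or by a Delaunay-triangulation count, not by ball-packing of $\Lambda$. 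In the thin part the gap is worse: there is no Margulis lemma for flat metrics, so a saddle connection of length $\ell\ll 1$ is in general \emph{not} contained in a flat cylinder or in any collar of definite area -- its neighborhood can have arbitrarily small area and carry no cylinder structure -- and ``excising a fixed neighborhood of $\sigma$'' is not a well-defined surgery producing a translation surface of lower complexity to which an induction could apply. Moreover the quantity you must control, namely the number of saddle connections of length at most $1$ on $g_t r_\theta Q$, is genuinely unbounded over the thin part of a stratum (a surface containing a slit torus of area $\delta$ has on the order of $1/\delta$ saddle connections of bounded length), so some argument specific to the fixed surface $Q$ and to long thin sectors/boxes of holonomy vectors is unavoidable; producing that bound is the main technical content of both Masur's proof and Vorobets's elementary one. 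As written, your third step assumes precisely this conclusion rather than proving it.
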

This was proven in \cite{sad con}. For an effective version proven by elementary methods see \cite{vor sad con}.
\begin{prop} \label{interval} (Boshernitzan \cite[Lemma 4.4]{rank2}) If the orbits of the discontinuities of $T$ are infinite and distinct then for any interval $J$ of size $e(n)$ there exist natural numbers ${p \leq 0 \leq q}$ such that
\begin{enumerate}
\item $q-p \geq n$
\item $T^i$ acts continuously on $J$ for $p \leq i <q$
\item $T^i(J) \cap T^j(J)= \emptyset$ for $p \leq i < j <q$.
\end{enumerate}
\end{prop}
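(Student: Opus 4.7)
My strategy is to build the largest possible Rokhlin-type tower over $J$ and then argue that its height must be at least $n$, using the minimum separation hypothesis $|J|=e(n)$ to rule out both types of obstructions to extending the tower.

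\textbf{Setup.} Let $q \geq 0$ be the largest integer and $p \leq 0$ the smallest integer such that $T^i$ acts continuously on $J$ for every $p \leq i < q$ and the images $T^p(J),T^{p+1}(J),\dots,T^{q-1}(J)$ are pairwise disjoint. Properties (2) and (3) hold by construction, so it suffices to show $q - p \geq n$. Argue by contradiction: assume $q - p < n$. By maximality, extending the tower one step at either end fails because of one of the following obstructions: (A) a discontinuity of $T$ is encountered, i.e., $T^q$ or $T^{p-1}$ fails to be continuous on $J$, or (B) an overlap occurs, i.e., $T^q(J)$ meets some earlier $T^i(J)$, or $T^{p-1}(J)$ meets some later $T^j(J)$.

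\textbf{Ruling out case (A).} If $T^q$ is discontinuous on $J$, there is a point $x \in J$ such that $T^k(x)$ is a discontinuity of $T$ for some $0 \leq k < q$; this $x$ is then a discontinuity of $T^j$ for every $j > k$, and in particular of $T^{q-p}$. A symmetric statement at $p-1$ produces another discontinuity $y \in J$ of $T^{q-p}$. Since the orbits of the discontinuities of $T$ are distinct, $x \neq y$. Because $q - p \leq n$, the discontinuity set of $T^{q-p}$ is contained in that of $T^n$, and the two nearest discontinuities of $T^{q-p}$ are separated by at least $e(q-p) \geq e(n)$. Having both $x$ and $y$ inside $J$ contradicts $|J|=e(n)$.

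\textbf{Ruling out case (B).} Suppose $T^q(J) \cap T^i(J) \neq \emptyset$ with $p \leq i < q$, and set $m = q - i$, so $1 \leq m < n$. Applying the continuous branch of $T^{-i}$ to both intervals (continuity is guaranteed by our choice of $p,q$) gives $J \cap T^m(J) \neq \emptyset$, with both intervals of equal length $e(n)$. If $T^m(J)=J$ then $T^m|_J$ is a translation by $0$ and $J$ is $T^m$-invariant; but then the backward orbit under $T^m$ of any point of $J$ is periodic, so taking a point of $J$ whose forward $T$-orbit eventually hits a discontinuity contradicts the assumption that discontinuity orbits are infinite and distinct. If instead $T^m(J) \neq J$, then $T^m|_J$ is a nontrivial translation and the symmetric difference endpoints of $J$ and $T^m(J)$ are two distinct points at distance strictly less than $|J| = e(n)$; iterating $T^m$ on the overlap until a discontinuity is reached (which happens in at most $\lfloor|J|/|t|\rfloor < n/m$ steps, keeping the total iterate count below $n$) produces two discontinuities of $T^k$ for some $k \leq n$ at distance less than $e(n)$, contradicting the definition of $e(n)$. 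The mirrored argument handles the failure at $p-1$.

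\textbf{Main obstacle.} The cleanest part is case (A); the delicate step is case (B), specifically bookkeeping the iterate count when $T^m(J)$ overlaps $J$ partially, and making sure the forced discontinuity we produce lies within the range $T^k$ with $|k| \leq n$ so that the $e(n)$ bound applies. Once this bookkeeping is in place, the contradiction $q - p < n$ is immediate, and the proposition follows.
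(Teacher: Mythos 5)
The paper does not actually prove this proposition (it is quoted from Boshernitzan, \cite[Lemma 4.4]{rank2}), so I am judging your argument on its own terms; unfortunately both of your key steps have genuine gaps. In Case (A) the backward obstruction gives a point $y\in J$ at which the \emph{negative} power $T^{p-1}$ is discontinuous, i.e.\ $y=T^{k}(\delta')$ for some $0\le k\le |p|$ and some discontinuity $\delta'$ of $T^{-1}$: such a point is a discontinuity of inverse powers of $T$, not of $T^{q-p}$. Indeed, under the very hypothesis that the discontinuity orbits are infinite and distinct, $y$ \emph{cannot} be a discontinuity of $T^{q-p}$, since that would force a discontinuity of $T$ to lie on the forward orbit of a discontinuity (or make an orbit finite). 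So the punchline "two discontinuities of $T^{q-p}$ inside $J$" never materializes and the comparison with $e(n)$ collapses. The real content of the lemma sits exactly here: one must transport the two witnesses by a power of $T$ that is continuous on $J$ (for instance push forward by $T^{q-1}$, sending the forward witness to an honest discontinuity $\delta$ of $T$ and the backward witness to a point $T^{q-p}(\gamma)$ with $\gamma$ a discontinuity or an endpoint), and then apply one further one-sided inverse branch so that both points become discontinuities of a \emph{single} positive power $T^{k}$ with $k\le n$ lying in an interval of length $<e(n)$; only then does the definition of $e(n)$ give the contradiction. That transport step is absent.

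Case (B) is also not ruled out by what you wrote. The bound $\lfloor |J|/|t|\rfloor < n/m$ is asserted, not proved, and it is false in general: if $T^m$ translates its continuity interval $I\supset J$ by $t\neq 0$, the only general lower bound (obtained by comparing the right endpoint $b$ of $I$ with $b-t$, which are both discontinuities of $T^{2m}$) is $|t|\ge e(2m)$, and $e(2m)$ can be much smaller than $e(n)$ when $m$ is comparable to $n$; moreover iterating $T^m$ until the overlap reaches a discontinuity takes on the order of $|I|/|t|$ steps, not $|J|/|t|$, and what it produces are two discontinuities of $T^{2m}$ at distance $|t|$, with $2m$ possibly exceeding $n$, so the $e(n)$-separation cannot be invoked. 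This computation only shows that an overlap forces $m>n/2$; an overlap at lag $m\in(n/2,n)$ is not excluded "locally," and this also breaks your case structure: maximality gives one obstruction at \emph{each} end, so you must handle the mixed cases (discontinuity at one end, overlap at the other) and the (overlap, overlap) case, none of which are covered once (B) alone no longer yields a contradiction. (Smaller issues: in the $t=0$ subcase the existence of a point of $J$ whose forward orbit meets a discontinuity needs justification, e.g.\ minimality via Keane's theorem or the endpoint of the maximal interval on which $T^m=\mathrm{id}$; and when the overlap index $i$ is negative, $T^m$ is continuous on $T^i(J)$, not on $J$, so the argument should be run there.) The overall plan — maximal tower, obstruction analysis against $e(n)$ and the infinite-distinct-orbits hypothesis — is the right shape, but the two steps where the lemma's real difficulty lies are exactly the ones asserted without proof.
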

\begin{cor} \label{key} For each translation surface and $\epsilon>0$ there exists $c_{\epsilon}<1$ 
such that for each $n$ a set of $\theta$ of measure $1-c_{\epsilon}$ has at least half of the points in $\{T_{\theta}^{n}(x),T_{\theta}^{n+1}(x),...,T_{\theta}^{2n}(x)\}$ pairwise $\frac{ \epsilon }{r^{k+1}}$ separated.
\end{cor}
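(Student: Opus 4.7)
The plan is to combine Corollary~\ref{discont2}, which supplies a lower bound on $e_{T_\theta}(N)$ for most $\theta$, with Proposition~\ref{interval}, which turns such a lower bound into a window of iterates on which $T_\theta$ acts as a translation with pairwise disjoint images. The key observation is that IETs restrict to translations on each continuity interval, so if $T^i$ is continuous on an interval $J$ and the $T^i(J)$ are pairwise disjoint for $i$ in some range, then for any $y \in J$ the iterates $T^i(y)$ sit at the same offset within $T^i(J)$ and are therefore pairwise separated by at least $|J|$ (disjoint intervals of length $\ell$ have same-offset points at distance $\geq \ell$).

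Fix $\epsilon > 0$ and choose $\epsilon' = 3\epsilon$, so that $\epsilon'/(2n+1) \geq \epsilon/n$ for every $n \geq 1$. By Corollary~\ref{discont2} there is $c_{\epsilon'} < 1$ such that for every $n$ the set $\{\theta : e_{T_\theta}(2n+1) < \epsilon'/(2n+1)\}$ has measure at most $c_{\epsilon'}$. Intersecting with the full-measure set of $\theta$ for which $T_\theta$ satisfies Keane's condition (so that Proposition~\ref{interval} applies) yields a set of admissible $\theta$ of measure at least $1 - c_{\epsilon'}$. For any such $\theta$ and any point $x$, set $m = \lfloor 3n/2 \rfloor$ and choose $J$ to be an interval of length $e_{T_\theta}(2n+1)$ containing $y := T_\theta^m(x)$. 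Proposition~\ref{interval} produces $p \leq 0 \leq q$ with $q - p \geq 2n+1$ on which $T^i$ is continuous and the images $T^i(J)$ pairwise disjoint, so by the translation observation the iterates $\{T_\theta^{m+i}(x) : i \in [p,q)\}$ are pairwise separated by at least $e_{T_\theta}(2n+1) \geq \epsilon/n$.

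Among these, the iterates lying in the target set $\{T_\theta^n(x), \ldots, T_\theta^{2n}(x)\}$ correspond to $i \in [n-m, 2n-m] = [-\lfloor n/2\rfloor, \lceil n/2 \rceil]$, an integer interval of length $n+1$ containing $0$. Since the window $[p, q-1]$ also contains $0$ and has length at least $2n+1$, a short case analysis on the relative positions of $p$ and $q$ shows that at least $\lfloor (n+1)/2 \rfloor$ of the $n+1$ target indices lie in $[p, q)$. Setting $c_\epsilon = c_{\epsilon'}$ then completes the proof. The main subtlety is that Proposition~\ref{interval} does not control whether the window $[p,q)$ extends forward, backward, or symmetrically from $0$; anchoring $J$ at the orbit midpoint $T_\theta^{\lfloor 3n/2 \rfloor}(x)$ is what guarantees that even in the worst case the window captures at least half of the target iterates.
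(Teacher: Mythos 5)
Your proposal is correct and takes essentially the same route as the paper's own proof: Corollary \ref{discont2} bounds the measure of directions where $e_{T_\theta}$ is small, and Proposition \ref{interval}, applied to an interval of size $e_{T_\theta}$ anchored at the midpoint iterate of the orbit segment, forces the continuity/disjointness window to cover at least half of the target iterates. The differences are only cosmetic --- your explicit count of indices in $[p,q)$ versus the paper's dichotomy that either the first half or the second half of the $\frac{\epsilon}{n}$-balls are disjoint images of one another --- and the off-by-one slack in your worst case is within the precision of the statement itself and harmless for its use in Proposition \ref{kurz for iet}.
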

\begin{proof} Choose $c_\epsilon<  \LebP\left(\{\theta: e_{T_{\theta}}<\frac \epsilon{2n}\}\right).$ If $e_{T_{\theta}}>\frac {2\epsilon} n$ and $\{B(x,\frac{\epsilon}{n}),...,B(T^{\frac{n}{2}}x,\frac{\epsilon}{n})\}$ are not disjoint or $T^i$ is not continuous on $B(x,\frac{\epsilon}{n})$ for some $0 \leq i \leq \frac{n}{2}$  then by Proposition \ref{interval} we have $\{B(T^{\frac{n}{2}}x,\frac{\epsilon}{n}),...,B(T^nx,\frac{\epsilon}{n})\}$ are disjoint and ${B(T^{i+\frac{n}{2}}x, \frac{\epsilon}{n})=T^iB(T^{\frac{n}{2}}x,\frac{\epsilon}{n})}$ for any $0 \leq i\leq \frac n 2$.
\end{proof}
To establish the Kurzweil property we show that for every $\delta>0$ there exists an $\epsilon_2>0$ such that for any set of directions $\mathcal{V}$ with $\LebP(\mathcal{V})>\delta$ there exists $\mathcal{U} \subset \mathcal{V}$ with $\LebP\left(\mathcal{U}\right)>0$ and  $\Leb\left(\LS B(T_{\theta}^ix,a_i)\right)>\epsilon_2$ for every $\theta \in \mathcal{U}$.  To establish this Corollary \ref{key} and Lemma \ref{separated} are used.
By Theorem \ref{kms} and Proposition \ref{LS reduction} this implies $\Leb\left(\LS B(T_{\theta}^ix,a_i)\right)=1$ for almost every $\theta.$
\begin{prop}\label{kurz for iet} Let $Q$ be a translation surface, then $\{T_{\theta}\}_{\theta\in(0,1)}$ with measure $\LebP$ satisfies the Kurzweil property.
\end{prop}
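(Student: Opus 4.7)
The plan is to carry out the strategy announced just above the proposition: for every $\delta>0$ I will produce $\epsilon_2=\epsilon_2(\delta)>0$ such that every measurable $\mathcal{V}\subset(0,1)$ with $\LebP(\mathcal{V})>\delta$ contains a positive-measure subset $\mathcal{U}$ where $\Leb(\LS B(T_\theta^i x,a_i))\geq \epsilon_2$ for some $x$. Letting $\delta\to 0$ shows that $\LebP$-a.e.\ $\theta$ admits some $x$ with $\Leb(\LS)>0$, and Theorem \ref{kms} together with Proposition \ref{LS reduction} then promotes this to $\Leb(\LS)=1$ for $\Leb$-a.e.\ $x$, giving the Kurzweil property. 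By property (4) of standard sequences I may assume $\limsup_n na_n=0$; by property (1), for any integer $r\geq 2$ the series $\sum_k r^k a_{r^{k+1}}$ diverges.

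Given such $\mathcal{V}$, choose $\epsilon>0$ small enough (via Corollary \ref{key}, whose proof invokes Corollary \ref{discont2}) that $c_\epsilon<\delta/4$. Let $A_k\subset(0,1)$ be the set of $\theta$ for which the orbit segment $\{T_\theta^{r^k}x,\ldots,T_\theta^{r^{k+1}}x\}$ contains at least $r^k/2$ points pairwise $\epsilon/r^{k+1}$-separated, so $\LebP(A_k)\geq 1-c_\epsilon$. For each $\theta\in A_k$, Lemma \ref{separated} applied to the separated points with radius parameter $a_{r^{k+1}}$, separation scale $e=\epsilon/(2r)$, and $S$ equal to the union of balls added by previous blocks yields, as in Remark \ref{meaning of sep}, a block contribution of at least $c\,r^k a_{r^{k+1}}$ to $\Leb(\bigcup_{i=N}^{r^{k+1}}B(T_\theta^i x,a_i))$, provided $\Leb(S)$ stays below a fixed threshold $\epsilon_1$. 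If this threshold were never reached, summing over good blocks $k\in K_\theta:=\{k:\theta\in A_k\}$ would force the running measure to exceed $c\sum_{k\in K_\theta,\,r^k\geq N} r^k a_{r^{k+1}}$, contradicting boundedness whenever this subsum diverges. A Fubini/Markov argument based on $\int_{\mathcal{V}}\mathbf{1}_{A_k^c}\,d\theta\leq c_\epsilon$ produces a positive-measure $\mathcal{U}\subset\mathcal{V}$ on which $K_\theta$ has enough density to force this divergence, yielding $\Leb(\LS B(T_\theta^i x,a_i))\geq \epsilon_1=:\epsilon_2$ on $\mathcal{U}$.

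The main obstacle is the $-2t$ term in Lemma \ref{separated}: naively, the accumulated set $S$ may consist of as many as $r^k$ intervals, potentially swamping the gain $r^k/2$. I plan to bypass this by either (i) replacing $S$ by its $\epsilon/r^{k+1}$-neighborhood, bounding the effective interval count by $O(\Leb(S)/\epsilon)$ rather than by the ball count, or (ii) iterating along a sparse subsequence of blocks $k_1<k_2<\cdots$ with $r^{k_{j+1}}\gg r^{k_j}$, so $t=o(r^{k_{j+1}})$ is automatic. A second technicality is translating density of $K_\theta$ into divergence of $\sum_{k\in K_\theta} r^k a_{r^{k+1}}$ even though $b_k:=r^k a_{r^{k+1}}$ is not necessarily monotone; this step requires either a monotone-subsequence reduction or a careful reapplication of property (2) of standard sequences after re-indexing the block structure at the orbit level.
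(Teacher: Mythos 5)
Your overall architecture (block decomposition along powers of $r$, separation from Corollary \ref{key}, Lemma \ref{separated} to get a definite gain per good block, then a contradiction with a small accumulated measure) is the paper's strategy, but the way you propose to close it has a genuine gap at the decisive step: you work block-by-block \emph{for each fixed $\theta$}, collect the good block indices $K_\theta=\{k:\theta\in A_k\}$, and then need $\sum_{k\in K_\theta} r^k a_{r^{k+1}}=\infty$. Positive lower density of $K_\theta$ does not imply this for a general standard sequence. Property (2) of standard sequences concerns positive-density subsets of the \emph{original} index set, not of the block index set, and the block sums $b_k=r^ka_{r^{k+1}}$ need not be monotone; the only constraint monotonicity of $\{a_i\}$ imposes is $b_{k+1}\le r\,b_k$, which still allows one to build a standard sequence (constant on the blocks $[r^k,r^{k+1})$) whose divergent mass $\sum_k b_k=\infty$ is carried entirely by long runs of block indices whose complement $K$ has lower density about $\tfrac12$ and satisfies $\sum_{k\in K}b_k<\infty$. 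This is precisely the phenomenon behind Theorem \ref{rigidthm}, and it is why Section \ref{full sec} needs the extra 2-standard hypothesis (see the remark after Proposition \ref{step 1}). Neither of your suggested repairs (a monotone-subsequence reduction, or re-applying property (2) after re-indexing) is available for an arbitrary standard sequence, and your workaround (ii) of sparsifying the blocks only makes the loss of mass worse. The paper avoids per-$\theta$ density entirely: for each fixed $k$ the set of $\theta\in\mathcal{V}'$ with $e_{T_\theta}(r^k)>\epsilon/r^k$ has measure at least $\tfrac12\LebP(\mathcal{V}')$, so one integrates the increment $\Leb\bigl(\bigcup_{i=r^{k-1}}^{r^k}B(T_\theta^ix,a_{r^k})\setminus\bigcup_{i=N}^{r^{k-1}}B(T_\theta^ix,a_i)\bigr)$ over $\mathcal{V}'$ and sums over \emph{all} $k$, using only $\sum_k r^ka_{r^k}=\infty$; this contradicts the uniform bound $\int_{\mathcal{V}'}\Leb\bigl(\bigcup_{i\ge N}B(T_\theta^ix,a_i)\bigr)\,d\theta\le\epsilon_2\LebP(\mathcal{V}')$. (An averaging or Fatou-type variant, showing that for most $\theta$ the good blocks capture a fixed fraction of the partial sums $\sum_{k\le K}b_k$, would also work, but that is the integral argument in disguise, not a density argument.)

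Two smaller points. First, your ``main obstacle'' (the $-2t$ term in Lemma \ref{separated}) is a non-issue with the paper's bookkeeping: at the block $[r^{k-1},r^k]$ the excluded set $S=\bigcup_{i=N}^{r^{k-1}}B(T_\theta^ix,a_i)$ is a union of at most $r^{k-1}$ intervals, while the separation argument of Corollary \ref{key} and Proposition \ref{interval} supplies roughly $\tfrac12(r^k-r^{k-1})$ points separated at scale $\epsilon/r^k$; with $r>8$ and $\epsilon_2=\epsilon/r$ the terms $2t$ and $n\epsilon_2/\epsilon$ are of lower order, so no neighborhood trick is needed, and the bound ``effective interval count $O(\Leb(S)/\epsilon)$'' in your fix (i) is not justified as stated. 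Second, your opening reduction ends with ``a.e.\ $\theta$ admits \emph{some} $x$ with $\Leb\bigl(\LS B(T_\theta^ix,a_i)\bigr)>0$''; Proposition \ref{LS reduction} upgrades this to full measure for that same $x$, but not to ``for a.e.\ $x$''. Either run the estimate with one fixed $x$ (it is uniform in $x$) and finish with Fubini in $(\theta,x)$, or argue by contradiction as the paper does, fixing a single $x$ for which $\Leb\bigl(\LS B(T_\theta^ix,a_i)\bigr)=0$ for almost every bad $\theta$.
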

\begin{proof}
Assume not. Then there exists a standard sequence $\{a_i\}_{i=1}^{\infty}$ and a set of directions $\mathcal{V}$, such that $\LebP(\mathcal{V})>2\delta$ and for any $\theta \in \mathcal{V}$ we have ${\Leb\left(\LS B(T_{\theta}^ix,a_i)\right)=0}$ for almost every $x$ (this follows by Theorem \ref{kms} and Proposition \ref{LS reduction}).  Fix $r>8$ and choose $\epsilon>0$ such that ${\LebP\left(\{\theta: e_{T_{\theta}}(n)< \frac{\epsilon}{n}\}\right)<\frac{\delta} 2}$ for all $n$.
 There exists $N$, $\mathcal{V}'$ such that $\LebP(\mathcal{V}')>\delta$ and
  $$\Leb \left( \underset{i=N}{\overset{\infty}{\cup}} B(T_{\theta}^i x, a_i)\right)
  < \epsilon \frac 1 r \text{ for any }\theta \in \mathcal{V}'.\text{ (See Remark \ref{meaning of sep}.)} $$  Denote $\frac {\epsilon}{r}$ by $\epsilon_2$.
  To see that $\mathcal{V}'$ exists notice that if
  ${\Leb\left(\LS B(T^i_{\theta}x,a_i)\right)=0}$ then for any
  ${\epsilon>0}$ there exists $N$ (depending on $\epsilon$ and $T_{\theta}$) such that
  ${\Leb
  \left(\underset{i=N}{\overset{\infty}{\cup}}B(T_{\theta}^ix,a_i)\right)<\epsilon}$.
   Then appeal to the countable subadditivity of measures.

If $\theta \in \mathcal{V}'$ and $e_{T_{\theta}}(n)>
\frac{\epsilon}{n} $ (which is the case for a set of measure at
least $\frac{\delta}{2}$ because $\LebP(\mathcal{V}')>\delta$ and
$\LebP\left(\{\theta: e_{T_{\theta}}(n)<\frac
{\epsilon}{n}\}\right)<\frac {\delta}{2}$) then by Corollary
\ref{key} and Lemma \ref{separated}
$$\Leb\left(\underset{i=r^{k-1}}{\overset{r^k}{\cup}}B(T_{\theta}^ix,a_{r^k})
\backslash
\underset{i=N}{\overset{r^{k-1}}{\cup}}B(T_{\theta}^ix,a_i)\right)>
a_{r^k}(\frac 1 2 (r^k-r^{k-1}) -2r^{k-1}-\frac
{\epsilon_2}{\epsilon}r^{k-1}).$$
 By our assumptions on $r, \epsilon, \epsilon_2$ this is greater than $a_{r^k}(\frac 1 2 r^k-\frac 7 2 r^{k-1}).$

From this it follows that
$$\int_{\mathcal{V}'}\Leb\left(\underset{i=r^{k-1}}{\overset{r^k}{\cup}}B(T_{\theta}^ix,a_{r^k})
 \backslash \underset{i=N}{\overset{r^{k-1}}{\cup}}B(T_{\theta}^ix,a_i)\right) d\theta> \frac 1 2  \LebP(\mathcal{V}') a_{r^k}(\frac 1 2 r^k-\frac 7 2 r^{k-1}).$$ With the observation that $\underset{k=1}{\overset{\infty}{\sum}} r^k a_{r^k}$ diverges, we derive a contradiction. This is because iterating the above argument for an increasing sequence of $k$ shows that there must be $\theta \in \mathcal{V}'$ with $\Leb\left(\underset{i=N}{\overset{\infty}{\cup}}B(T^i_{\theta}x,a_i)\right)>\epsilon$, which contradicts the definition of $\mathcal{V}'$.
\end{proof}
By Fubini's Theorem we get the following result.
\begin{cor} The set of IETs with irreducible permutations and Lebesgue measure on the parameterizing space satisfies the Kurzweil property.
\end{cor}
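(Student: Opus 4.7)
The plan is to deduce the corollary from Proposition \ref{kurz for iet} by realizing almost every IET as the vertical first-return map of some translation surface, and running Fubini against the rotational $SO(2)$-action on moduli space.

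Fix a standard sequence $\bold{a}=\{a_i\}_{i=1}^{\infty}$ and an irreducible permutation $\pi$ on $\{1,\dots,d\}$. Choose a stratum $\mathcal{H}$ of unit-area translation surfaces, equipped with its Masur-Veech measure $\nu$, whose vertical-flow first-return map on a fixed horizontal transversal realizes an IET with some permutation $\pi'$ in the Rauzy class of $\pi$ and length vector $L(Q) \in \Delta_d$. In Veech's zippered-rectangles coordinates, $\nu$ decomposes (up to a bounded density) as the product of Lebesgue measure on $L$ and Lebesgue measure on the height data, so the push-forward of $\nu$ under $Q \mapsto (L(Q), \pi(Q))$ is absolutely continuous with respect to $\bold{m}_d$ on each simplex in the Rauzy class. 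Define
\[
B := \{Q \in \mathcal{H} : T_{\mathrm{vert}}^{Q}\text{ fails the shrinking target condition for }\bold{a}\}.
\]
It suffices to show $\nu(B) = 0$: its projection to $\Delta_d$ will then be $\bold{m}_d$-null, handling every $\pi'$ in the Rauzy class of $\pi$, and running over the finitely many Rauzy classes of irreducible permutations on $\{1,\dots,d\}$ (and over $d$) exhausts all irreducible permutations.

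The crucial identity is $T_\theta^Q = T_{\mathrm{vert}}^{r_\theta Q}$, where $r_\theta \in SO(2)$ rotates the translation surface, combined with the $SO(2)$-invariance of $\nu$. Proposition \ref{kurz for iet} applied to each $Q \in \mathcal{H}$ with the fixed sequence $\bold{a}$ says that $\{\theta \in (0,1) : r_\theta Q \in B\}$ has $\LebP$-measure zero; integrating this vanishing over $Q$ and swapping the order of integration using the rotational invariance of $\nu$ gives
\[
0 = \int_{\mathcal{H}} \int_0^1 \mathbb{1}_B(r_\theta Q) \, d\theta \, d\nu(Q) = \int_0^1 \nu(r_{-\theta} B) \, d\theta = \nu(B).
\]

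The main obstacle is the classical (but nontrivial) input that the Masur-Veech measure decomposes as a product of Lebesgue measures in zippered-rectangles coordinates, so that the projection $Q \mapsto L(Q)$ sends $\nu$-null sets to $\bold{m}_d$-null sets. Granted this, what remains is the short Fubini argument exploiting the rotational symmetry of $\nu$ together with the per-surface conclusion of Proposition \ref{kurz for iet}.
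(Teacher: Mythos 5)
Your key identity $T^{Q}_{\theta}=T^{r_\theta Q}_{\mathrm{vert}}$ does not hold in the setup you actually use, and the argument collapses at that point. Rotation by $r_\theta$ does carry the direction-$\theta$ flow on $Q$ to the vertical flow on $r_\theta Q$, but it carries the horizontal transversal of $Q$ to a segment of $r_\theta Q$ that is no longer horizontal; your set $B$ is defined via the vertical return map to the \emph{canonical horizontal} transversal of each surface in the stratum, and that is a different cross-section. The two return maps are first-return maps of the same flow to different transversals, hence different IETs related by inducing, and the shrinking-target property for a \emph{fixed} sequence $\bold{a}$ is not obviously invariant under a change of cross-section (return times and the metric scale both change). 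Consequently Proposition \ref{kurz for iet}, which is a statement about almost every direction on a fixed surface \emph{with a fixed transversal}, does not give $\LebP\left(\{\theta : r_\theta Q \in B\}\right)=0$, and the Fubini/rotation-invariance computation has nothing to integrate. There is also a secondary slip in the measure-theoretic bookkeeping: from $\nu(B)=0$ and ``pushforward of $\nu$ is absolutely continuous with respect to $\bold{m}_d$'' you cannot conclude that the projection of $B$ to $\Delta_d$ is $\bold{m}_d$-null (images of null sets under such maps need not be null). What is needed is the reverse domination, $\bold{m}_d \ll$ (pushforward of $\nu$), so that the preimage of any positive-measure set of bad IETs would have positive $\nu$-measure; one must also address measurability of the bad set of IETs, as the paper does in an analogous situation in Proposition \ref{measurable}.

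The transfer the paper intends (its one-line ``by Fubini'') avoids the rotation action entirely and so never needs an equivariant choice of transversal: realize $S_{L,\pi}$ as the vertical return map to the built-in horizontal base of a zippered-rectangle suspension $Q(L,\pi,\tau)$. On that surface, for $\theta$ in a neighborhood of the vertical, the return map $T_\theta$ to the \emph{same} transversal is again a $d$-IET with permutation $\pi$ whose length vector moves along a line segment through $L$ in $\Delta_d$, with direction governed by the suspension data $\tau$. Proposition \ref{kurz for iet} says that on each such suspension almost every $\theta$ is good, and Fubini applied to this family of line segments (varying $L$ transversally for fixed $\tau$, and then over $\tau$ and the finitely many relevant permutations) converts ``almost every direction on every suspension'' into ``$\bold{m}_d$-almost every $(L,\pi)$.'' Your $SO(2)$-plus-Masur--Veech route can be made to work, but only after either proving that badness for a fixed $\bold{a}$ is independent of the transversal, or recasting $B$ on a space of pairs (surface, transversal) carrying a measure invariant under the combined rotation action; as written, that missing step is a genuine gap.
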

\begin{cor}\label{kurz for flow} Let $Q$ be a translation surface, then $\{F_{\theta}^t\}_{\theta \in (0,1)}$ with measure $\LebF$ satisfies the Kurzweil property.
\end{cor}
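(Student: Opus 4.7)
The plan is to reduce Corollary~\ref{kurz for flow} to Proposition~\ref{kurz for iet} via the suspension representation of $F_\theta^t$ over the IET $T_\theta$ acting on the horizontal transversal $\bar v$. Each flow $F_\theta^t$ is isomorphic (up to a null set) to the special flow over $T_\theta$ with roof $R_\theta \colon \bar v \to (0, \infty)$ giving the first return time; for any $\theta$ not parallel to $\bar v$, the function $R_\theta$ is bounded above by some $M_\theta < \infty$, essentially by the diameter of $Q$ divided by $|\sin \theta|$.

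Let $f \colon \mathbb{R} \to \mathbb{R}^+$ be decreasing with $\int_0^\infty \LebF\bigl(B(F_\theta^t x, f(t))\bigr)\,dt = \infty$. Since $\LebF\bigl(B(\cdot, r)\bigr) \asymp r^2$ for small $r$, this forces $\int_0^\infty f(t)^2\,dt = \infty$, and monotonicity of $f$ (together with $f(t)^2 \leq f(0) f(t)$) yields $\int_0^\infty f(t)\,dt = \infty$. For each positive integer $M$, define $a_i^{(M)} := f(2M(i+1))$; this is a standard sequence, since $\sum_i a_i^{(M)} \geq \tfrac{1}{2M} \int_{2M}^\infty f(s)\,ds = \infty$. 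By Proposition~\ref{kurz for iet} applied to each $a^{(M)}$ and countable intersection, for $\LebP$-a.e.\ $\theta$, every $M$ and $\Leb$-a.e.\ $x_1, y_1 \in \bar v$ satisfy $|T_\theta^i x_1 - y_1| < a_i^{(M)}$ for infinitely many $i$.

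To translate back, write $x \in Q$ in suspension coordinates as $F_\theta^s(x_1, 0)$ and $y \in Q$ as $(y_1, y_2)$ with $0 \leq y_2 < R_\theta(y_1)$. Let $t_i := \bigl(\sum_{j<i} R_\theta(T_\theta^j x_1)\bigr) - s$ be the time of the $i$-th return of the orbit of $x$ to $\bar v$; then $t_i \leq M i$ once $M \geq M_\theta$. For $\Leb$-a.e.\ $y_1$, $y_1$ lies in the interior of a column of $R_\theta$, so there is $\delta(y_1) > 0$ with $R_\theta(z) = R_\theta(y_1) > y_2$ and $d\bigl((z, y_2), (y_1, y_2)\bigr) = |z - y_1|$ whenever $|z - y_1| < \delta(y_1)$. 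We may assume $f(t) \to 0$ (otherwise the conclusion is immediate from Theorem~\ref{kms} and the Birkhoff ergodic theorem), so $a_i^{(M)} \to 0$. For each of the infinitely many $i$ with $|T_\theta^i x_1 - y_1| < \min(a_i^{(M)}, \delta(y_1))$, we have $F_\theta^{t_i + y_2}(x) = (T_\theta^i x_1, y_2)$, at distance $|T_\theta^i x_1 - y_1| < a_i^{(M)} = f(2M(i+1)) \leq f(t_i + y_2)$ from $y$. Hence $y \in \CLS B(F_\theta^t x, f(t))$ for $\LebF$-a.e.\ $y$; Fubini's theorem together with a flow analogue of Proposition~\ref{LS reduction} (applied using the $\LebF$-ergodicity of $F_\theta^t$ provided by Theorem~\ref{kms}) promotes this to $\LebF\bigl(\CLS B(F_\theta^t x, f(t))\bigr) = 1$ for $\LebF$-a.e.\ $x \in Q$.

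The main technical obstacle is the geometry of $Q$ near the column boundaries of $R_\theta$, which arise from saddle connections in direction $\theta$; however the union of these boundaries is $\LebF$-null, so a.e.\ $y$ avoids them. A secondary but easily handled point is the $\theta$-dependence of $M_\theta$, which is sidestepped by proving the IET condition for the whole family $\{a^{(M)}\}_{M \in \mathbb{N}}$ simultaneously and selecting the appropriate integer $M$ only after $\theta$ has been fixed.
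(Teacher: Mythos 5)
Your argument is correct and is essentially the paper's own proof: it reduces to Proposition \ref{kurz for iet} by passing to the first-return IET on the transversal, replacing $f$ with a standard sequence of the form $a_i=f(\mathrm{const}\cdot(i+1))$ using a bound on the return time, and translating the limsup statement back to the flow. The only difference is that you treat the $\theta$-dependence of the return-time bound more carefully (via the countable family $a^{(M)}$, $M\in\mathbb{N}$, chosen after $\theta$ is fixed), a quantifier issue the paper's brief proof glosses over by picking $\theta$ before defining $k$.
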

\begin{proof} Consider the full measure set of directions such that all points outside of the orbit of a singularity have a unique pre-image on the transversal. Pick one such direction $\theta$ and let $k$ be the greatest first return time of $F_{\theta}^t$ to the transversal. If $u,v$ are points in $Q$ and $x_u$ and $x_v$ are the pre-images of $u$ and $v$ on the transversal under $F_{\theta}^t$ then $u \in \CLS B\left(F_{\theta}^tv,f(t)\right)$ whenever $$x_u \in \LS B\left(T_{\theta}^i\left(x_v\right),f\left(k\left(i+1\right)\right)\right).$$ With the observation that $a_i=f(ki)$ is a standard sequence the result follows from Proposition \ref{kurz for iet}.
\end{proof}

\section{Strong Kurzweil property}

This section establishes the strong Kurzweil property by first showing a slightly different dual property. Throughout this section we assume that we are in a fixed translation surface $Q$.
\begin{prop}\label{measurable} For any sequence $\{a_i\}_{i=1}^{\infty}$ the set
 $$\left\{ \theta: \exists x \in [0,1)\text{ with } \Leb\left(\LS B(T_{\theta}^ix,a_i)\right)=0 \right\}$$ is measurable.
\end{prop}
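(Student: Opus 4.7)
The plan is to exhibit the set as the projection onto the first coordinate of a jointly Borel subset of $[0,1)\times[0,1)$, and then to invoke the fact that projections of Borel sets (analytic sets) are universally measurable, hence Lebesgue measurable. Throughout, write $\Phi(\theta,x):=\Leb\left(\LS B(T_\theta^i x,a_i)\right)$.

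The first task is joint Borel measurability of $\Phi$ on $[0,1)\times[0,1)$. For each fixed $i$, the map $(\theta,x)\mapsto T_\theta^i(x)$ is Borel: the length vector and permutation of $T_\theta$ depend piecewise continuously on $\theta$ (with jumps only at the countably many directions of saddle connections of bounded combinatorial complexity), so one obtains a countable Borel partition of the parameter interval on each piece of which $(\theta,x)\mapsto T_\theta^i(x)$ is continuous in both variables. Next, for each $N\leq M$ the map $(z_N,\dots,z_M)\mapsto \Leb\left(\bigcup_{i=N}^M B(z_i,a_i)\right)$ is continuous on $[0,1)^{M-N+1}$, so $(\theta,x)\mapsto \Leb\left(\bigcup_{i=N}^M B(T_\theta^i x,a_i)\right)$ is Borel. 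Letting $M\to\infty$ (a monotone increasing limit) and then $N\to\infty$ (a monotone decreasing limit) preserves Borel measurability, so $\Phi$ is jointly Borel.

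Consequently $E:=\{(\theta,x):\Phi(\theta,x)=0\}$ can be written as $\bigcap_{k\geq 1}\bigcup_{N\geq 1}\{(\theta,x):\Leb(\bigcup_{i\geq N} B(T_\theta^i x,a_i))<1/k\}$, and is a Borel subset of $[0,1)^2$. The set appearing in the proposition is exactly the projection $\pi_\theta(E)\subset [0,1)$; as the continuous image of a Borel set in a Polish space it is analytic, hence Lebesgue measurable by the classical theorem on the measurability of analytic (Suslin) sets.

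I expect the main subtlety to be the joint Borel measurability of the iterate map $(\theta,x)\mapsto T_\theta^i(x)$: the IET $T_\theta$ itself varies with $\theta$ and its discontinuities move as $\theta$ changes, so some care is needed in writing down the Borel decomposition of directions according to the combinatorial type of $T_\theta^i$. Once this book-keeping is in place, the rest of the argument is a routine application of standard descriptive set theory, and in particular of the fact that the projection of a Borel subset of a product of Polish spaces is Lebesgue measurable.
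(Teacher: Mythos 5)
Your argument is correct, but it follows a genuinely different route from the paper. You prove joint Borel measurability of $(\theta,x)\mapsto\Leb\left(\LS B(T_\theta^i x,a_i)\right)$ and then quote the measurable projection theorem: the set in question is the $\theta$-projection of the Borel set $\{\Phi=0\}$, hence analytic, hence Lebesgue measurable. The paper instead stays elementary: it reduces the statement to measurability of the sets $U_{N,M,\epsilon}=\{\theta:\exists x \text{ with } \Leb(\cup_{i=N}^M B(T_\theta^i x,a_i))<\epsilon\}$ and proves this via Lemma \ref{reduction}, which partitions the direction interval (up to measure zero) into countably many intervals $A_j$ on each of which the infimum over $x$ is $\delta$-attained by a single point $x_j$ whose finite orbit, and hence the quantity $\theta\mapsto\Leb(\cup_{i=1}^M B(T_\theta^i x_j,a_i))$, varies continuously in $\theta$; this sandwiches $U_{N,M,\epsilon}$ between explicit measurable sets. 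Your approach buys generality and brevity at the cost of invoking descriptive set theory (and it only yields Lebesgue, not Borel-type, measurability, which is all that is needed); the paper's approach buys a reusable tool, since Lemma \ref{reduction} is used again in the proof of Proposition \ref{other} to choose the points $x_j$ measurably and avoid integrating a possibly non-measurable function $\theta\mapsto x_\theta$ -- a service your projection argument does not provide. One small imprecision, which you partly flag yourself: it is not true that $(\theta,x)\mapsto T_\theta^i(x)$ is continuous in both variables on pieces of a countable partition of the \emph{parameter} interval alone, since for fixed $\theta$ the map $T_\theta^i$ is discontinuous in $x$; the partition must be taken in the product $[0,1)_\theta\times[0,1)_x$, according to the combinatorial type of $T_\theta^i$ in $\theta$ and to which continuity interval of $T_\theta^i$ the point $x$ belongs, on each piece of which $T_\theta^i(x)=x+c(\theta)$ with $c$ continuous. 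With that bookkeeping the joint Borel measurability, and hence your proof, goes through.
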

It suffices to show the fact that $$U_{N,M,\epsilon}:=\{\theta: \exists x \in [0,1) \text{ with } \Leb\left(\underset{i=N}{\overset{M}{\cup}}B(T_{\theta}^ix,a_i)\right)<\epsilon \}$$ is measurable for all $N, M , \epsilon$ which follows from the next lemma.

\begin{lem}\label{reduction} For any sequence $\{a_i\}_{i=1}^{\infty}$, $\delta>0$ and $M \in \mathbb{N}$ there exists $A_1,A_2,$..., a countable partition of $\LebP$-almost
 all of $[0,1)$ into intervals,
 and associated points $\{x_j\}_{j=1}^{\infty}$ such that for each $\theta \in A_j$ we have $${\Leb\left(\underset{i=1}{\overset{M}{\cup}}B(T_{\theta}^ix_j, a_i)\right)< \underset{x \in [0,1)}{\inf}\Leb \left(\underset{i=1}{\overset{M}{\cup}}B(T_{\theta}^ix, a_i)\right)+\delta}.$$
\end{lem}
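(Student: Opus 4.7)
Write $f_\theta(x):=\Leb\bigl(\bigcup_{i=1}^M B(T_\theta^ix,a_i)\bigr)$ and $g(\theta):=\inf_x f_\theta(x)$. The goal is to partition $\LebP$-almost every $\theta\in[0,1)$ into countably many intervals $A_j$ and assign a single $x_j$ to each with $f_\theta(x_j)<g(\theta)+\delta$ for every $\theta\in A_j$. The plan is to work locally in $\theta$: near any $\theta_0$ I will find an open neighborhood and a single good $x_0$, and then harvest a countable disjoint interval partition via Lindel\"of and a greedy construction.

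The combinatorial type of the finite iterate $(T_\theta^i)_{i=1}^M$ changes at only countably many $\theta$ (those for which a discontinuity of $T_\theta$ has an orbit of length at most $M$ landing on another discontinuity, equivalently directions of saddle connections crossing the transversal at most $M$ times). Removing this null set yields a countable partition of $[0,1)_\theta$ into open intervals $B_\ell$ of fixed combinatorics. On each $B_\ell$, the endpoints of the continuity intervals $J_1(\theta),\ldots,J_K(\theta)$ of $(T_\theta^i)_{i=1}^M$ depend affinely on $\theta$, and on $J_k(\theta)$ one has $T_\theta^i x=x+c_{k,i}(\theta)$ with $c_{k,i}$ affine in $\theta$. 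A further countable refinement accounting for the ordering of the ``truncation breakpoints'' -- the $x$ where $T_\theta^i x\in\{0,1\}$ for some $i\leq M$, also affine in $\theta$ -- gives pieces on which $(\theta,x)\mapsto f_\theta(x)$ is jointly continuous on each of finitely many open cells of $(\theta,x)$-space, and $x\mapsto f_\theta(x)$ is constant on the ``rigid'' cells (where no ball meets $\{0,1\}$) and piecewise-affine on the others.

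Fix such a refined piece $B$. At each $\theta_0\in B$ choose $x_0$ in the interior of a cell with $f_{\theta_0}(x_0)<g(\theta_0)+\delta/4$, which is possible because $f_{\theta_0}$ is piecewise continuous with finitely many pieces. By joint continuity on the cell and the continuous motion of its boundary, there is an open interval $U\ni\theta_0$ inside $B$ on which $x_0$ remains in the corresponding cell and $|f_\theta(x_0)-f_{\theta_0}(x_0)|<\delta/4$. Moreover $g$ is continuous on $B$, since $g(\theta)$ is the minimum of the finitely many continuous functions of $\theta$ obtained by evaluating $f_\theta$ at each cell endpoint and by the constant-or-min-of-affine value on each cell. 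Shrinking $U$ further gives $|g(\theta)-g(\theta_0)|<\delta/4$, and combining the estimates yields $f_\theta(x_0)<g(\theta)+\delta$ for every $\theta\in U$.

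Since each refined piece $B$ is second-countable, Lindel\"of extracts a countable subcover of $B$ from $\{U(\theta_0):\theta_0\in B\}$, and a standard greedy construction turns it into a countable disjoint interval partition of $B$ equipped with the associated points $x_j$. Assembling over the countably many refined pieces gives the desired partition of $\LebP$-almost every $[0,1)_\theta$. The main technical obstacle is the combined bookkeeping of the ordinary combinatorial breakpoints and the truncation breakpoints so that within a single refined piece $g$ really is continuous; the affine, hence locally Lipschitz, dependence of all these breakpoints on $\theta$ is what makes this tractable.
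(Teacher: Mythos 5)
Your argument is correct and follows essentially the same route as the paper's (terse) proof: away from the countably many directions of saddle connections crossing the transversal at most $M$ times (which accumulate only at the horizontal direction, so their complement really is a countable union of intervals), the finite orbit data and hence $\Leb\bigl(\bigcup_{i=1}^M B(T_\theta^i x,a_i)\bigr)$ vary continuously in $\theta$, and the interval partition with its points $x_j$ is harvested from local choices plus a countable cover; you additionally make explicit the continuity of $\inf_{x}\Leb\bigl(\bigcup_{i=1}^M B(T_\theta^i x,a_i)\bigr)$ on pieces of constant combinatorics, a step the paper compresses into ``the lemma follows.'' The one inaccuracy is the claim of affine dependence on $\theta$: the breakpoints and translation constants are affine in $\cot(2\pi\theta)$, hence real-analytic in $\theta$, which is all your refinement and continuity argument actually needs.
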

To be explicit, the $A_i$ are sets of directions that parametrize the IETs and the $x_i$ are points that the IETs act on.
\begin{proof}
For fixed $x$ and $M$ let $\{\phi_i\}_{i=1}^{\infty}$ be the sets of directions of saddle connections which intersect the transversal $M$ or fewer times. Let $\{\phi'_i\}_{i=1}^{\infty}$ be the set of directions $\phi_i'$ such that $T_{\phi_i'}^j x$ is a discontinuity of $T_{\phi_i'}^M$ for some $j<M$. This may be a finite set. These two sets can have at most two accumulation points, the direction of the transversal. On any interval which does not contain one of these points $T_{\theta}^i(x)$ changes continuously with $\theta$. Therefore $\Leb\left(\underset{i=1}{\overset{M}{\cup}}B(T_{\theta}^ix_j, a_i)\right)$ changes continuously with $\theta$ on these intervals and the lemma follows.
\end{proof}
This establishes that $A_i \cap U_{N,M,\epsilon}$ is contained in an open set union at most 2 points which in turn is contained in $A_i \cap U_{N,M,\epsilon+\delta}$. Thus for any $\delta>0$ there is a measurable set contained in $U_{N,M,\epsilon+\delta}$ which contains $U_{N,M,\epsilon}$. Intersecting these measurable sets shows that $U_{N,M,\epsilon}$ is measurable.

We now establish a closely related property that is easier to show than the strong Kurzweil property and is neither stronger nor weaker. See Remark \ref{set}.
\begin{prop}\label{other} Let $\{a_i\}_{i=1}^{\infty}$ be a standard sequence. $$\LebP\left(\{\theta: \exists x \in [0,1)\text{ with } \Leb\left(\LS B(T_{\theta}^ix,a_i)\right)=0 \}\right)=0.$$
\end{prop}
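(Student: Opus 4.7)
The plan is to adapt the proof of Proposition \ref{kurz for iet} by replacing the single, fixed point $x$ by a measurable selection of near-optimal points supplied by Lemma \ref{reduction}. The combinatorial backbone — separation of orbits (Corollary \ref{key}), the disjoint-ball bound (Lemma \ref{separated}), and divergence of $\sum r^k a_{r^k}$ — is then used unchanged. Measurability of the set in question is guaranteed by Proposition \ref{measurable}.

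Suppose for contradiction the set has $\LebP$-measure larger than $2\delta>0$. Fix $r>8$ and, using Corollary \ref{discont2}, choose $\epsilon>0$ so small that $\LebP(\{\theta: e_{T_\theta}(n) < \epsilon/n\}) < \delta/2$ for every $n$. By countable subadditivity there exists $N$ such that
$$\mathcal{V}':=\left\{\theta: \exists\, x \in [0,1) \text{ with } \Leb\Big(\bigcup_{i=N}^\infty B(T_\theta^i x, a_i)\Big) < \epsilon/r\right\}$$
satisfies $\LebP(\mathcal{V}')>\delta$.

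Fix $M>N$ and a small $\delta'>0$, and apply Lemma \ref{reduction} (whose proof is unchanged if one considers the partial union from $N$ to $M$ instead of from $1$ to $M$) to obtain an interval partition $\{A_j\}$ of almost all of $[0,1)$ together with associated points $\{x_j\}$ such that, for each $\theta \in A_j$,
$$\Leb\Big(\bigcup_{i=N}^M B(T_\theta^i x_j, a_i)\Big) < \inf_{x \in [0,1)} \Leb\Big(\bigcup_{i=N}^M B(T_\theta^i x, a_i)\Big) + \delta'.$$
In particular, for $\theta \in A_j \cap \mathcal{V}'$ the left-hand side is strictly less than $\epsilon/r + \delta'$.

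Now, on each piece $A_j$ with the fixed point $x_j$, run the calculation from Proposition \ref{kurz for iet}. For each $k$ with $N \leq r^{k-1}$ and $r^k \leq M$, on the subset $\{\theta \in \mathcal{V}': e_{T_\theta}(r^k) > \epsilon/r^k\}$ — which has measure at least $\delta/2$ by the choice of $\epsilon$ — Corollary \ref{key} and Lemma \ref{separated} yield
$$\Leb\Big(\bigcup_{i=r^{k-1}}^{r^k} B(T_\theta^i x_j, a_i)\setminus \bigcup_{i=N}^{r^{k-1}} B(T_\theta^i x_j, a_i)\Big) \geq c\, a_{r^k} r^k$$
for a constant $c>0$ depending only on $r$. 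Since the increments for distinct $k$ are pairwise disjoint subsets of $\bigcup_{i=N}^M B(T_\theta^i x_j, a_i)$, summing over $k$ and integrating over $\mathcal{V}'$ gives
$$\frac{c\delta}{2}\sum_{\substack{k:\, r^{k-1}\geq N\\ r^k \leq M}} a_{r^k} r^k \leq (\epsilon/r + \delta')\,\LebP(\mathcal{V}').$$
Because $\sum_k r^k a_{r^k}$ diverges (as in Proposition \ref{kurz for iet}), choosing $M$ large contradicts this bound. The principal obstacle is the $\theta$-dependence of the witness $x$, which is precisely what Lemma \ref{reduction} dispatches; once a measurable near-minimizer is in hand on each $A_j$, the argument reduces piecewise to the proof of the Kurzweil property.
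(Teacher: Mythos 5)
Your proposal is correct and follows essentially the same route as the paper's own proof: pass to a positive-measure set $\mathcal{V}'$ with a uniform starting index and small tails at the witness points, use Lemma \ref{reduction} to select measurable near-minimizers $x_j$ on a partition, and then run the Corollary \ref{key}/Lemma \ref{separated} increment estimate at the $x_j$, integrating and using divergence of $\sum r^k a_{r^k}$ to contradict near-minimality. The only differences are cosmetic (generic constants $c,\delta'$ and letting $M\to\infty$ instead of the paper's explicit choices $\epsilon_2$, $\tfrac1{4r}$, and a fixed $r^M$).
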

\begin{proof} Assume not.
By Proposition \ref{LS reduction} we may assume that there exists a set of directions $\mathcal{V}$ with $\LebP(\mathcal{V})>2\delta$ and for every
$\theta \in \mathcal{V}$ there exists $x_{\theta}$
 such that $ \Leb \left( \LS B(T_{\theta}^ix_{\theta},a_i) \right)=0.$
  Let $r>8$ be a natural number and choose $\epsilon>0$ such that
${\LebP\left(\{\theta: e_{T_{\theta}}(n)<\frac{\epsilon}{n}\}\right)<\frac{\delta} 2}$.
Choose $\epsilon_2$ such that
$\epsilon_2<\frac {\epsilon} {r}$ and $\epsilon_2<\frac 1 {8r}$.
Choose $r^N$ such that there exist
$\mathcal{V}' \subset \mathcal{V}$ with $\LebP(\mathcal{V}')> \delta$
 and each $\theta \in \mathcal{V}'$ satisfies
$\Leb\left(\underset{i=r^N}{\overset{\infty}{\cup}}B(T^ix,a_i)\right)< \epsilon_2$.
 Choose $r^M$ such that
$\frac 1 {12} \underset{i=r^N}{\overset{r^M}{\sum}} r^ia_{r^i}> \frac 1 r$.
As was seen in the proof of Proposition \ref{kurz for iet} if $\theta \in \mathcal{V}'$
 and $e_{T_{\theta}}(n)> \frac{\epsilon}{n} $ then
 $$\Leb\left(\underset{i=r^{k-1}}{\overset{r^k}{\cup}}B(T^ix,a_{r^k}) \backslash \underset{i=r^N}{\overset{r^{k-1}}{\cup}}B(T^ix,a_i)\right)> a_{r^k}\left(\frac 1 2 (r^k-r^{k-1}) -2r^{k-1}-\frac {\epsilon_2}{\epsilon}r^{k-1}\right).$$
 By our assumptions on $r, \epsilon, \epsilon_2$ this is greater than $a_{r^k}\left(\frac 1 2 r^k-\frac 7 2 r^{k-1}\right).$
Following Lemma \ref{reduction} choose a partition of $\mathcal{V}$ into measurable sets $A_1,A_2,...$ such that for each $A_j$ there is $x_j$ with the property that for each $T \in A_j$
$$\Leb\left(\underset{i=r^N}{\overset{r^M}{\cup}} B(T^i(x_j),a_i)\right)<\underset{ x \in [0,1)}{\inf}\Leb \left(\underset{i=r^N}{\overset{r^M}{\cup}}B(T^ix,a_i)\right)+ \frac 1 {4r} .$$
 Notice that under our assumptions, which imply that
$\epsilon_2 + \frac 1 {4r}< \frac 1 {24}$,
 $$\underset{j=1}{\overset{\infty}{\sum}} \int_{A_j}\Leb\left(\underset{i=r^N}{\overset{r^M}{\cup}}B(T^i(x_j),a_i)\right)dT>\frac 1 2 \LebP(\mathcal{V}') \frac 1 {12} \underset{i=r^N}{\overset{r^M}{\sum}} r^ka_{r^k}  >(\epsilon_2 +\frac 1 {4r})\LebP(\mathcal{V}').$$
 This derives a contradiction to the definition of $\mathcal{V}'$.
\end{proof}
\begin{rem}
In the proof we used Lemma \ref{reduction} to avoid any possibility of measurability concerns with the integral; naively one would want to take $\int_{\mathcal{V}'}\Leb\left( \underset{i=r^N}{\overset{\infty}{\cup}} B(T_{\theta}^i(x_{\theta}),a_i)\right)d\theta$.
\end{rem}
Fubini's Theorem and Proposition \ref{other} show that for every standard sequence
 $\{a_i\}_{i=1}^{\infty}$, $\LebP$-almost every $\theta$ and $\Leb$-almost every $y$
 we have $y \in \LS B(T_{\theta}^ix,a_i)$ for almost every $x$.
 Strengthening this to show that for $\LebP$-almost every $\theta$ and every $y$,
 $$\left\{x: y \in \LS B(T_{\theta}^ix,a_i)\right\}$$
has full measure establishes the strong Kurzweil property. The first step is
\begin{lem} The set $$\left\{\theta: \exists y\in [0,1) \text{ with } \Leb\left(\LS T^{-i}_{\theta}B(y,a_i)\right)<1\right\}$$ is measurable.
\end{lem}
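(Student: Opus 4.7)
The plan is to follow the proof of Proposition \ref{measurable} with forward orbits replaced by preimages and the threshold $0$ by $1$. Define
$$V_{N,M,\epsilon}:=\left\{\theta: \exists\, y \in [0,1)\text{ with } \Leb\left(\bigcup_{i=N}^{M} T_\theta^{-i} B(y, a_i)\right) < 1-\epsilon\right\}.$$
As in the first paragraph following Proposition \ref{measurable}, using that $\Leb(\LS T_\theta^{-i}B(y,a_i))=\lim_N \Leb(\bigcup_{i\geq N} T_\theta^{-i}B(y,a_i))$ and $\bigcup_{i\geq N}T_\theta^{-i}B(y,a_i)=\bigcup_M \bigcup_{i=N}^M T_\theta^{-i}B(y,a_i)$, the set in the lemma decomposes as a countable union/intersection of the $V_{N,M,\epsilon}$; so it suffices to show each $V_{N,M,\epsilon}$ is measurable.

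Next I would establish the exact analog of Lemma \ref{reduction}: for any $N, M, \delta$, there is a countable partition $A_1,A_2,\ldots$ of $\LebP$-almost all of $[0,1)$ into intervals, together with basepoints $y_j \in [0,1)$, such that for each $\theta \in A_j$,
$$\Leb\left(\bigcup_{i=N}^{M} T_\theta^{-i} B(y_j, a_i)\right) < \inf_{y \in [0,1)} \Leb\left(\bigcup_{i=N}^{M} T_\theta^{-i} B(y, a_i)\right) + \delta.$$
The argument is formally the same as for Lemma \ref{reduction}, with the roles of forward and inverse iterates swapped: for a fixed $y$, the point $T_\theta^{-j}(y)$ depends continuously on $\theta$ off a countable set of exceptional directions, namely (a) directions of saddle connections crossing the transversal at most $M$ times, and (b) directions for which $y$ coincides with a discontinuity of $T_\theta^{-j}$ for some $j \leq M$. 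On each interval of directions avoiding these points, $\theta \mapsto \Leb(\bigcup_{i=N}^{M} T_\theta^{-i} B(y, a_i))$ is continuous, and a covering argument produces the partition.

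The partition then yields the sandwich
$$A_j \cap V_{N,M,\epsilon} \subseteq \left\{\theta \in A_j : \Leb\left(\bigcup_{i=N}^{M} T_\theta^{-i} B(y_j, a_i)\right) < 1-\epsilon + \delta\right\} \subseteq A_j \cap V_{N,M,\epsilon-\delta},$$
the middle set being open in $A_j$ up to a countable set, hence measurable. Intersecting over a rational sequence $\delta_k \downarrow 0$ and taking the union over $j$ exhibits $V_{N,M,\epsilon}$ as a countable intersection of measurable sets.

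The main obstacle is establishing the partition lemma: one must check that for each fixed $y$ the exceptional set of directions is genuinely countable, which combines Masur's quadratic-growth bound on saddle connections with the fact that the discontinuities of the inverse IET $T_\theta^{-1}$, and hence of its iterates up to order $M$, vary continuously in $\theta$ except across finitely many ``jump'' directions at each fixed combinatorial level. Once this is in place, the proof is a routine translation of the argument for Proposition \ref{measurable}.
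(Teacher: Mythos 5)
Your proposal is correct and follows essentially the same route as the paper, which disposes of this lemma by simply declaring its proof identical to that of Proposition \ref{measurable}; your $V_{N,M,\epsilon}$ decomposition, the preimage analogue of Lemma \ref{reduction}, and the sandwich argument are exactly the intended translation. Nothing essential is missing beyond the level of detail the paper itself supplies.
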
 This is identical to Proposition \ref{measurable}.

Step two is establishing an analogue of Lemma \ref{separated} for this situation.
\begin{lem} \label{separated 2}Let $e, \epsilon>0$ and $r, k \in \mathbb{N}$. If $e_{T}(r^{k+1})>\frac e {r^{k+1}}$ and $S$ is the union of at most $r^{k}$ balls and  $\Leb(S)>\epsilon$ then $$\Leb\left(\underset{i=r^{k}}{\overset{r^{k+1}}{\cup}} T^{-i}B(y,\delta)\backslash S\right)> \frac 1 4 (\frac 1 2 (r^{k+1}-r^{k})-2r^{k}- \frac {\epsilon}{e} r^{k+1})\delta$$ provided that $\delta< \frac e {2r^{k+1}}$.
\end{lem}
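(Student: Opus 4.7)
The plan is to mirror the proof of Lemma~\ref{separated}, with the backward orbit $z_i := T^{-i}(y)$ for $r^k \le i \le r^{k+1}$ playing the role of the separated centers. The first and only genuinely new step is to convert the forward discontinuity bound $e_T(r^{k+1}) > e/r^{k+1}$ into backward separation of the $z_i$'s. I would set $y_0 := T^{-(r^{k+1}+r^k)/2}(y)$ and apply Proposition~\ref{interval} to $J = B(y_0, e/(2r^{k+1}))$ with $n = r^{k+1} - r^k$. Since the discontinuities of $T^{r^{k+1}-r^k}$ lie among those of $T^{r^{k+1}}$, the hypothesis gives $|J| < e_T(r^{k+1} - r^k)$, so Proposition~\ref{interval} produces a two-sided tower $\{T^i(J)\}$ for $p \le i < q$ with $p \le 0 \le q$ and $q - p \ge n$, consisting of pairwise disjoint balls of radius $e/(2r^{k+1})$. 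Whichever of $|p|$ or $q$ is at least $n/2$ yields at least $(r^{k+1} - r^k)/2$ iterates $T^i(y_0) = z_{(r^{k+1}+r^k)/2 - i}$ that lie in $\{z_j : r^k \le j \le r^{k+1}\}$ and are pairwise $e/r^{k+1}$-separated. Call this set $\mathcal{Z}$.

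For each $z_i \in \mathcal{Z}$ I would then extract from $T^{-i}B(y,\delta)$ a sub-interval of length at least $\delta$ near $z_i$. The inequality $\delta < e/(2r^{k+1})$ combined with $e_T(r^{k+1}) > e/r^{k+1}$ forces $B(z_i, \delta)$ to meet at most one discontinuity of $T^i$, so the continuity component of $T^i$ through $z_i$ meets $B(z_i, \delta)$ in a sub-interval $K_i$ of length at least $\delta$. Since $T^i|_{K_i}$ is a translation sending $z_i$ to $y$, we get $T^i(K_i) \subset B(y,\delta)$, i.e.\ $K_i \subset T^{-i}B(y,\delta)$. The separation $|z_i - z_j| \ge e/r^{k+1} > 2\delta$ forces the $K_i$'s to be pairwise disjoint.

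Finally, I would count, exactly as in Lemma~\ref{separated}, how many $z_i \in \mathcal{Z}$ lie close to $S$. The $(e/(2r^{k+1}))$-neighborhood of $S$ has Lebesgue measure at most $\epsilon + r^k \cdot e/r^{k+1}$, and so contains at most $r^{k+1}\epsilon/e + 2r^k$ of the $e/r^{k+1}$-separated points of $\mathcal{Z}$. For every other $z_i \in \mathcal{Z}$ one has $K_i \cap S = \emptyset$, and summing the disjoint contributions $\Leb(K_i) \ge \delta$ over the at least $\tfrac{1}{2}(r^{k+1} - r^k) - 2r^k - \tfrac{\epsilon}{e}r^{k+1}$ remaining indices comfortably dominates the claimed bound, the factor $\tfrac{1}{4}$ absorbing the slack from the one-sided use of the tower and the half-ball contributions. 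The main obstacle throughout is the first step: producing backward separation of $\{T^{-i}(y)\}$ from a forward discontinuity hypothesis. The two-sided nature of the tower in Proposition~\ref{interval}, exploited by centering the application at $y_0$, is exactly what makes this conversion possible.
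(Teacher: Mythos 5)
Your argument is correct and is essentially the paper's proof: both use Proposition \ref{interval} to obtain roughly $\tfrac12(r^{k+1}-r^{k})$ backward images of $y$ at times in $[r^{k},r^{k+1}]$ that are $\tfrac{e}{r^{k+1}}$-separated, keep at least a half-interval of length $\delta$ of each corresponding $T^{-i}B(y,\delta)$ around them, and then count the centers lying near $S$ exactly as in Lemma \ref{separated}, with the factor $\tfrac14$ absorbing the losses. The only real difference is how the separation is produced --- you re-center at $T^{-\lfloor(r^{k+1}+r^{k})/2\rfloor}(y)$ and invoke the two-sided tower there, while the paper runs a connectivity dichotomy for the preimages $T^{-i}B(y,\tfrac{e}{r^{k+1}})$ --- and note that, like the paper's own proof, your counting uses the intended hypothesis $\Leb(S)<\epsilon$ (the ``$>$'' in the statement is a typo).
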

\begin{proof}
Fix $y$ and consider $$\left\{T^{-r^k}(B(y,\delta)),..., T^{-r^{k+1}}(B(y,\delta))\right\}.$$ By Proposition~ \ref{interval}, we have that if
$e_T(r^{k+1}) > 2\delta$ then each $T^{-i}(B(y,\delta))$ in this set is the union of at most 2 intervals. Moreover, either
$T^{-i}(B(y,\frac e {r^{k+1}}))$ is connected for ${0<i<\frac {r^{k+1}-r^k}{2}}$ or it isn't.
In the first case we have $\frac {r^{k+1}-r^k}{2}$ inverse images of $B(y,\delta)$
at least $\frac e {r^{k+1}}$ separated  and consider $\underset{i=r^k}{\overset{\frac 1 2 (r^{k+1}-r^k)}{\cup}}T^{-i}(B(y,\delta))$.
 In the other case if $B(y,\delta)$ is split then the larger of the two pieces have
 $\frac {r^{k+1}-r^k}{2}$ inverse images that are $\frac e {r^{k+1}}$ separated from each other and if $B(y,\delta)$
 does not split we have $\frac {r^{k+1}-r^k}{2}$ copies of $B(y,\delta)$ that are  $\frac e {r^{k+1}}$ separated from each other.
It follows that if
$$\Leb\left(\underset{i=N}{\overset{r^{k-1}}{\cup}}T^{-i}(B(y,a_i))\right)<\epsilon_2 \text{ and }e_{T}(r^k)>\frac{\epsilon}{r^k}$$
then
\begin{multline}\lefteqn{\Leb\left(\underset{i=r^{k-1}}{\overset{r^{k}}{\bigcup}}T^{-i}(B(y,a_i))
\backslash
(\underset{i=N}{\overset{r^{k-1}}{\bigcup}}T^{-i}(B(y,a_i)))\right)}\\>
\frac 1 2 \left(\frac 1 2 (r^k -r^{k-1})-2r^{k-1}-\frac
{\epsilon_2}{\epsilon}r^k\right)\frac 1 2 a_{r^k}.\end{multline}
\end{proof} Proceeding analogously to Proposition \ref{other} we obtain,
\begin{prop}\label{skurz} For any translation surface $Q$ the set $\{T_{\theta}\}_{\theta \in (0,1)}$ with Lebesgue measure $\LebP$ satisfies the strong Kurzweil property.
\end{prop}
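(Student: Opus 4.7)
The plan is to prove Proposition \ref{skurz} by repeating the contradiction argument of Proposition \ref{other}, but with the roles of basepoint and target swapped: one fixes a target $y$ and controls the preimages $T_\theta^{-i}B(y,a_i)$, using Lemma \ref{separated 2} at each dyadic (or $r$-adic) scale in place of the combination of Corollary \ref{key} and Lemma \ref{separated}. By the measurability fact stated just above Lemma \ref{separated 2} together with Proposition \ref{LS reduction}, I may suppose for contradiction that there exist $\delta>0$ and a measurable $\mathcal V\subset(0,1)$ with $\LebP(\mathcal V)>2\delta$ such that every $\theta\in\mathcal V$ admits some $y_\theta\in[0,1)$ with $\Leb(\LS T_\theta^{-i}B(y_\theta,a_i))=0$.

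Fix an integer $r>8$, use Corollary \ref{discont2} to pick $\epsilon>0$ with $\LebP(\{\theta:e_{T_\theta}(n)<\epsilon/n\})<\delta/2$ for all $n$, and then pick $\epsilon_2$ small enough that $\epsilon_2<\epsilon/r$ and $\epsilon_2+\tfrac{1}{4r}<\tfrac{1}{24}$. Countable subadditivity of $\Leb$ yields an $N\in\mathbb N$ and a measurable $\mathcal V'\subset\mathcal V$ with $\LebP(\mathcal V')>\delta$ on which $\Leb(\cup_{i\geq r^N}T_\theta^{-i}B(y_\theta,a_i))<\epsilon_2$. Since $\theta\mapsto y_\theta$ need not be measurable, I would invoke the exact analogue of Lemma \ref{reduction} (its proof is verbatim, with the targeted point playing the role of the basepoint) to partition $\mathcal V'$ into measurable pieces $A_1,A_2,\dots$ with associated points $y_j$ such that, for every $\theta\in A_j$ and every $M$,
$$\Leb\left(\underset{i=r^N}{\overset{r^M}{\cup}}T_\theta^{-i}B(y_j,a_i)\right)<\underset{y\in[0,1)}{\inf}\Leb\left(\underset{i=r^N}{\overset{r^M}{\cup}}T_\theta^{-i}B(y,a_i)\right)+\frac{1}{4r}.$$

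At each scale $k$ the set of $\theta\in\mathcal V'$ with $e_{T_\theta}(r^k)>\epsilon/r^k$ has measure at least $\delta/2$, and on it Lemma \ref{separated 2}, applied with $S=\cup_{i=r^N}^{r^{k-1}}T_\theta^{-i}B(y_j,a_i)$ (of measure below $\epsilon_2$ and a union of at most $r^{k-1}$ pairs of intervals), lower-bounds the measure added by the $r^{k-1}$-to-$r^k$ block by a positive multiple of $r^k a_{r^k}$. Choosing $M$ so that $\sum_{k=N+1}^{M} r^k a_{r^k}$ is large (possible because $\{a_i\}$ is standard, so this partial sum is comparable to $\sum a_i$), integrating the previous display over $A_j$, and summing over $j$ then forces, exactly as in Proposition \ref{other}, the inequality
$$\underset{j}{\sum}\int_{A_j}\Leb\left(\underset{i=r^N}{\overset{r^M}{\cup}}T_\theta^{-i}B(y_j,a_i)\right)d\theta>\left(\epsilon_2+\tfrac{1}{4r}\right)\LebP(\mathcal V'),$$
contradicting the partition bound above. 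The main obstacle is the measurable selection of the bad targets $y_\theta$, circumvented by the Lemma \ref{reduction} analogue; secondarily, the separation estimate Lemma \ref{separated 2} is weaker than its forward-orbit cousin by a factor of $4$ because a preimage $T_\theta^{-i}B(y,\delta)$ can split into two intervals at discontinuities of $T_\theta^i$, which is why the tolerances on $\epsilon_2$ and $1/r$ must be tightened relative to Proposition \ref{other}; once that is done the endgame is driven by exactly the same divergence of $\sum r^k a_{r^k}$.
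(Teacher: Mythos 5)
Your proposal is correct and follows exactly the route the paper intends: the paper's own proof of Proposition \ref{skurz} consists precisely of the remark that one proceeds "analogously to Proposition \ref{other}," using the measurability statement and the analogue of Lemma \ref{reduction} for targets together with Lemma \ref{separated 2} in place of Corollary \ref{key} and Lemma \ref{separated}, and concluding via the divergence of $\underset{k}{\sum} r^k a_{r^k}$. Your write-up fills in that sketch faithfully (including the factor-of-$4$ loss from preimages splitting and the resulting tightening of constants), with only the cosmetic caveat that the partition from the Lemma \ref{reduction} analogue depends on $M$, so $M$ should be fixed before the partition is chosen, as in the paper's argument.
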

\begin{proof}[Proof of Theorem \ref{main theorem}] This follows from Proposition \ref{skurz} by a parallel argument to how Corollary \ref{kurz for flow} follows from Proposition \ref{kurz for iet}.
\end{proof}
By Fubini's theorem we obtain Corollary \ref{strong kurz for IET}.

\section{Almost every IET fails MSTP}\label{rigid}
Analogously to Kurzweil's result, almost every IET does not satisfy MSTP. To prove Theorem \ref{rigidthm} we recall a theorem, which shows that almost every IET is rank 1 and rigid:
\begin{thm} (Veech \cite[Theorem 1.4 Part I]{metric}) For almost every IET $T$ given any $\epsilon>0$ there exists $N \in \mathbb{N},$ and an interval $J \subset [0,1)$ such that:
\begin{enumerate}
\item $J \cap T^n(J)=\emptyset$ for  $0<n < N$.
\item $T$ is continuous on $T^n(J) $ for $ 0 \leq n<N$.
\item $\Leb\left(\underset{n=1}{\overset{N}{\cup}}T^n(J)\right)>1-\epsilon$.
\item $\Leb\left(T^N(J) \cap J\right)>(1-\epsilon) \Leb(J)$.
\end{enumerate}
\end{thm}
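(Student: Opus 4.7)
The plan is to prove this rigidity (rank-one) statement via Rauzy--Veech induction on the space of IETs. Recall that $\mathcal{R}(T)$ is obtained by comparing the two rightmost subintervals of $T$, truncating the longer, and recording the first-return IET $\mathcal{R}(T)$ to the remaining left subinterval $J_1$. Iterating produces a nested family $J_1 \supset J_2 \supset \cdots$ together with presentations of $T$ as a Rokhlin tower over $\mathcal{R}^k(T) \colon J_k \to J_k$: the columns of the tower are subintervals of $J_k$ on which the return time $r_k$ is constant, and $T$ acts as an isometric translation on each individual tower level.

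First I would invoke the foundational result (Masur, Veech) that $\mathcal{R}$ preserves a $\sigma$-finite measure in the Lebesgue class on each Rauzy class, with an ergodic acceleration (Zorich induction) admitting a finite invariant measure. By Poincar\'e recurrence, for $\mathbf{m}_d$-almost every $T$ the forward orbit $\{\mathcal{R}^k(T)\}$ returns infinitely often to any fixed positive-measure subset of the Rauzy-class simplex. I would fix a ``balanced'' subset $B$ consisting of IETs whose subinterval lengths are pairwise within a factor of some constant, and then single out a further sub-collection $B'\subset B$ in which the combinatorial datum (permutation and Rauzy path) of $\mathcal{R}^k(T)$ corresponds to a near-identity rearrangement of $J_k$. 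Along the $k$ for which $\mathcal{R}^k(T)\in B'$, the tower base has $\Leb(J_k)\asymp 1/N_k$ where $N_k$ is the average return time.

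With this setup, properties (1) and (2) are immediate from the tower structure: setting $J:=J_k$ and $N:=N_k$, the levels $T^n J$ are pairwise disjoint for $0<n<N$ by construction, and $T$ acts as a continuous isometric translation on each. Property (3) follows by choosing $k$ large enough so that the mass not covered by the tower (the ``top floor'' of columns truncated below the maximal return time $N$) is less than $\epsilon$: for a balanced step the number of columns is bounded and their bottom intervals are comparable, so the uncovered mass is $O(1/N_k)$. Property (4), the approximate-return $\Leb(T^N J\cap J)>(1-\epsilon)\Leb(J)$, says that $\mathcal{R}^k(T)$ itself acts as an approximate identity on a $(1-\epsilon)$-portion of $J$; I would obtain it by restricting attention to those $k$ for which the permutation of $\mathcal{R}^k(T)$ is the identity (or close to it) and its length vector is concentrated on a single dominant subinterval, so that the induced return map $\mathcal{R}^k(T)$ moves most of $J$ back into $J$.

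The main obstacle is Property (4). Mere recurrence into the balanced set $B$ does not prevent $\mathcal{R}^k(T)$ from shuffling $J$ non-trivially; one must show that the Rauzy--Veech orbit visits the ``identity-like'' locus $B'$ with positive frequency. This is the technical heart of the argument and cannot be circumvented by elementary manipulation: it rests on the ergodicity of Zorich induction combined with connectivity/recurrence properties of the Rauzy diagram guaranteeing visits to the identity combinatorial datum. Once this recurrence to the near-identity locus is granted, the four conclusions of the theorem are essentially bookkeeping over the resulting tower.
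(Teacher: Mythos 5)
This theorem is quoted by the paper from Veech's metric theory paper and is not reproved there, so I can only judge your sketch on its own terms; its overall strategy (Rauzy--Veech towers plus recurrence of the renormalization orbit to a distinguished positive-measure locus, with Kerckhoff/Masur--Veech distortion or ergodicity supplying the recurrence) is indeed the standard route. But two steps are genuinely wrong as written. First, your derivation of (1)--(3) from balancedness alone does not work. If you take $J:=J_k$ (the whole inducing interval) and a single height $N:=N_k$, the images $T^nJ_k$ are neither pairwise disjoint nor domains of continuity, because the Rokhlin tower over $J_k$ has $d$ columns with \emph{different} heights $h_j=|C_j(M(T,k))|$; and if you shrink $N$ to the minimal height, the tower $\bigcup_{n<N}T^nJ_k$ captures only a fraction of the mass bounded away from $1$ (roughly $1/\nu$ for a $\nu$-balanced matrix), never $1-\epsilon$. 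The uncovered mass is not $O(1/N_k)$. To get (3) you must take $J$ to be a \emph{single} column base $I^{(k)}_j$ with $N=h_j$, and you need that one column's tower to carry measure $>1-\epsilon$, which forces the normalized length vector of $\mathcal{R}^k(T)$ to lie near a vertex of the simplex (one exchanged subinterval of length $>(1-\delta)\Leb(J_k)$); balancedness is then used only to bound the mass of the remaining $d-1$ towers by roughly $\nu\delta$.

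Second, the locus you propose for (4) --- stages where the permutation of $\mathcal{R}^k(T)$ is ``the identity or close to it'' --- is not the right object: the identity permutation is reducible and never occurs in an irreducible Rauzy class, and ``near-identity combinatorial datum'' is not defined. The correct observation is that no combinatorial condition is needed at all: once one exchanged subinterval $I^{(k)}_j$ has length $>(1-\delta)\Leb(J_k)$, both $I^{(k)}_j$ and its image under the induced map $S=\mathcal{R}^k(T)$ are subintervals of $J_k=[0,\Leb(J_k))$ of length $>(1-\delta)\Leb(J_k)$, hence each contains $[\delta\Leb(J_k),(1-\delta)\Leb(J_k))$ and they overlap in measure $\ge(1-2\delta)\Leb(J_k)$; since $T^{h_j}|_{I^{(k)}_j}=S|_{I^{(k)}_j}$, this is exactly (4). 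So the whole theorem reduces to: for almost every $T$ there are infinitely many $k$ with $M(T,k)$ $\nu$-balanced and the renormalized length vector in the $\delta$-corner of the simplex. That recurrence statement is precisely what Kerckhoff-type conditional-measure estimates at balanced times give (compare Corollary \ref{often hit} in Section \ref{full sec} of this paper, applied to $U$ a corner of $\Delta_{d-1}$), or what ergodicity of the Zorich acceleration with respect to a measure equivalent to Lebesgue gives; appealing to ``connectivity of the Rauzy diagram'' is not enough, since connectivity says nothing about Lebesgue-typical visits. With these two repairs your outline becomes the standard proof; as it stands, the deduction of (3) is false and the locus driving (4) is vacuous.
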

Let $T$ be an IET such that the above Theorem holds. Let $\epsilon_i=\frac 1 {3^i}$. Choose $N_{i} \in \mathbb{N}$ increasing and intervals $J_i \subset [0,1)$ such that:
\begin{enumerate}
\item $J_i \cap T^n(J_i)=\emptyset$ for  $0<n < N_i$.
\item $T$ is continuous on $T^n(J_i) $ for $ 0 \leq n<N_i$.
\item $\Leb\left(\underset{n=1}{\overset{N_i}{\cup}}T^n(J_i)\right)>1-3^{-i}$.
\item $\Leb\left(T^{N_i}(J_i) \cap J_i\right)> (1-3^{-i}) \Leb(J_i)$.
\end{enumerate}
Notice that $$|T^{N_j}x-x|< \frac 1 {N_j 3^j} \text{ for any
} x \in \underset{n=1}{\overset{N_j}{\cup}}T^n\left(J_j \cap T^{-N_j}(J_j)\right).$$
 This is a set of measure at least $1-2(3^{-j} )$. Likewise,
$$|T^{kN_j}x-x|<\frac k {N^j 3^j} \text{ for } {x \in \underset{n=1}{\overset{N_j}{\cup}}T^n\left(J_j \cap T^{-N_j}(J_j) \cap ... \cap T^{-kN_j}(J_j)\right)}.$$
 This set has measure at least ${1-(k+1)3^{-j}}$.
Let $a_i= \frac 1 {2^j N_j}$ for all ${2^{j-1}N_{j-1} \leq i < 2^jN_j}$. If
$$x \in \underset{n=1}{\overset{N_j}{\cup}}T^n\left(J_j \cap T^{-N_j}(J_j) \cap ... \cap T^{-2^jN_j}(J_j)\right)$$ then
$$\Leb\left(\underset{i=2^{j-1}N_{j-1}}{\overset{2^jN^j}{\cup}}B(T^ix,a_i)\right)<N^j\frac 1 {2^jN_j}+2^jN_j \frac 1 {3^jN_j}.$$
 Almost every $x$ is eventually in
$$\underset{n=1}{\overset{N_j}{\cup}}T^n\left(J_j \cap T^{-N_j}(J_j) \cap ... \cap T^{-2^jN_j}(J_j)\right) $$
 for all large enough $j$
(because $\underset{j=1}{\overset{\infty}{\sum}} (2^j+1)3^{-j}< \infty$). Therefore ${\Leb \left(\LS B(T^ix,a_i)\right)=0}$ for almost all $x$.
In fact, by examining how $x$ travels in $\underset{n=1}{\overset{N_j}{\cup}}T^n(J_j)$
 one gets $\Leb \left(\LS B(T^ix,a_i)\right)=0$ for every $x$.
Observing that $\bold{a}$ is standard establishes Theorem \ref{rigidthm}.

This sequence is picked especially to take advantage of the rigidity of $T$. Section \ref{full sec} shows that for many natural sequences $\bold{b}$ there exists one and the same full measure set such that $\Leb (\LS B(T^ix,b_i))=1$ for every $x$.
\begin{rem} Almost every IET has the property that the orbit of every point is dense. It follows that for almost every IET $\LS B(T^ix,a_i)$ is a dense $G_{\delta}$ set for any $\bold{a}$ with $a_i>0$ for all $i$.\end{rem}



\section{Rauzy-Veech induction}\label{rv ind}
Our treatment of Rauzy-Veech induction will be the same as in \cite[Section 7]{gauss}.  We recall it here. 
 Let $T$ be a $d$-IET with permutation $\pi$. Let $\delta_+(T)$ be the rightmost discontinuity of $T$ and $\delta_-(T)$ be the rightmost discontinuity of $T^{-1}$.
  Let $\delta_{max}(T)=\max\{\delta_+(T),\delta_-(T)\}$. Let $I^{(1)}(T)= [0,\delta_{max}(T))$.
 Consider the induced map of $T$ on $[0,\delta_{\max})$ denoted $T|_{[0,\delta_{\max})}$. If $\delta_+ \neq \delta_-$ this is a $d$-IET on a smaller interval, perhaps with a different permutation.
 We will often write $\delta_-,\delta_+, \delta_{max}, I^{(1)}$ for $\delta_-(T),\delta_+(T), \delta_{max}(T), I^{(1)}(T)$ when there is no confusion.

 If $\delta_{max}= \delta_+$ we say the first step in Rauzy-Veech induction is $a$. In this case the permutation of $R(T)$ is given by
\begin{equation} \pi'(j)= \begin{cases}
 \pi (j) & \quad j \leq \pi^{-1}(d)\\ \pi(d) & \quad j=\pi^{-1}(d)+1 \\ \pi(j-1) & \quad \text{otherwise}

\end{cases}.
\end{equation}
We keep track of what has happened under Rauzy-Veech induction by a matrix $M(T,1)$ where
\begin{equation} M(T,1)[ij]= \begin{cases} \delta_{i,j} & \quad j \leq \pi^{-1}(d)\\
 \delta_{i, j-1} & \quad j>\pi^{-1}(d) \text{ and } i \neq d\\
\delta_{\pi^{-1}(d),j} & \quad i=d \end{cases}.
 \end{equation}
 If $\delta_{max}= \delta_-$ we say the first step in Rauzy-Veech induction is $b$.
 In this case the permutation of $R(T)$ is given by
\begin{equation} \pi'(j)= \begin{cases}
 \pi (j) & \quad \pi(j) \leq \pi(d)\\ \pi(j)+1 & \quad \pi(d) < \pi(j) < d \\ \pi(d)+1 & \quad \pi (j)=d

\end{cases}.
\end{equation}
 We keep track of what has happened under Rauzy-Veech induction by a matrix \begin{equation}M(T,1)[ij]= \begin{cases} 1 & \quad i=d \text{ and }j= \pi^{-1}(d) \\ \delta_{i,j} & \quad \text{ otherwise} \end{cases}.
\end{equation}
The matrices described above depend on whether the step is $a$ or $b$ and the permutation $T$ has. The following well known lemmas which are immediate calculations help motivate the definition of $M(T,1)$.
\begin{lem} \label{one step} If $R(T)=S_{L, \pi}$  then the length vector of $T$ is a real number multiple of $M(T,1)L$. \end{lem}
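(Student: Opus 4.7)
My plan is to verify the lemma by a direct computation in each of the two cases (step $a$ and step $b$) distinguished by the definition of Rauzy--Veech induction. In each case one identifies how the domain $[0,\delta_{\max})$ is partitioned by the first return map of $T$, reads off the new length vector, and compares with the stated formula for $M(T,1)$.

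In step $a$ one has $\delta_{\max}=\delta_+ = l^T_1 + \dots + l^T_{d-1}$ with $l^T_d<l^T_{\pi^{-1}(d)}$, so $I_d$ lies entirely outside $U := [0,\delta_{\max})$ while every other $I_j$ is inside. I would observe that for $x \in U$ the first iterate $T(x)$ lies outside $U$ precisely when $T(x) \in I_d$, which picks out the rightmost subinterval of $I_{\pi^{-1}(d)}$ of length $l^T_d$ as the set of points needing a second iterate. Reading $U$ from left to right, the domain partition of $R(T)$ is $I_1,\dots,I_{\pi^{-1}(d)-1}$, then the left piece of $I_{\pi^{-1}(d)}$ of length $l^T_{\pi^{-1}(d)}-l^T_d$, then the right piece of length $l^T_d$, then $I_{\pi^{-1}(d)+1},\dots,I_{d-1}$---exactly $d$ intervals. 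Together with the stated definition of $\pi'$ (which, in particular, assigns the new label $\pi^{-1}(d)+1$ to the right piece, i.e.\ to the former $I_d$), this determines the length vector $L$ of $R(T)$; inverting the resulting linear map $L^T\mapsto L$ entry by entry yields $L^T = M(T,1)L$ with $M(T,1)$ exactly as defined in the $a$ case.

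Step $b$ is analogous but simpler: $\delta_{\max}=\delta_-$ with $l^T_{\pi^{-1}(d)}<l^T_d$, so $I_{\pi^{-1}(d)}$ is entirely in $U$ but $T(I_{\pi^{-1}(d)})$ sits in the rightmost portion of $I_d$. One checks that the first return time is $1$ on $U\setminus I_{\pi^{-1}(d)}$ and $2$ on $I_{\pi^{-1}(d)}$ (the two iterates push it via $I_d$ onto $T(I_d)\subset U$), and the only change in the interval structure is that $I_d$ is truncated to its left piece of length $l^T_d - l^T_{\pi^{-1}(d)}$. Thus $L$ and $L^T$ agree in every coordinate except the last, where $l^T_d = l_d + l_{\pi^{-1}(d)}$---which is exactly $L^T = M(T,1)L$ for the identity-plus-one matrix $M(T,1)$ defined in the $b$ case.

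The only real obstacle is bookkeeping rather than substance: one has to keep careful track of the relabeling of intervals in step $a$ (the split of $I_{\pi^{-1}(d)}$ into two new intervals at positions $\pi^{-1}(d)$ and $\pi^{-1}(d)+1$, with the subsequent old indices shifted up by one), and of the ``real number multiple'' in the statement, which simply reflects the freedom to rescale $R(T)$ from $[0,\delta_{\max})$ to $[0,1)$; with the convention $R(T)=T|_{[0,\delta_{\max})}$ used here, a direct check of column sums of $M(T,1)$ shows the multiple is $1$.
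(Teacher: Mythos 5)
The paper offers no proof of this lemma (it is dismissed as an immediate, well-known calculation), and your direct case-by-case verification via the first-return picture is exactly the intended argument: in case $a$ you correctly identify $T^{-1}(I_d)$ as the rightmost subinterval of $I_{\pi^{-1}(d)}$ of length $l^T_d$, the resulting $d$-interval partition of $[0,\delta_{\max})$, and in case $b$ the return time $2$ on $I_{\pi^{-1}(d)}$ and the truncation of $I_d$; case $b$ then matches the displayed matrix $I+E_{d,\pi^{-1}(d)}$ on the nose. Two caveats, though. First, in case $a$ your inversion actually gives $l^T_j=L_j$ for $j<\pi^{-1}(d)$, $l^T_{\pi^{-1}(d)}=L_{\pi^{-1}(d)}+L_{\pi^{-1}(d)+1}$, $l^T_j=L_{j+1}$ for $\pi^{-1}(d)<j<d$, and $l^T_d=L_{\pi^{-1}(d)+1}$; so the $d$-th row of the matrix must have its $1$ in column $\pi^{-1}(d)+1$ (it is the right-hand piece, of new label $\pi^{-1}(d)+1$, that visits $I_d$ before returning), whereas the paper's displayed third case reads $\delta_{\pi^{-1}(d),j}$ for $i=d$ (and its cases even overlap at $(i,j)=(d,\pi^{-1}(d))$). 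Your computation is right and the displayed entry is evidently a typo, but asserting the result is ``exactly as defined'' without writing the matrix out hides this; a careful write-up should record the matrix you obtain and note the discrepancy. Second, the closing remark that ``a direct check of column sums of $M(T,1)$ shows the multiple is $1$'' is a non sequitur: the multiple is $1$ simply because $R(T)=T|_{[0,\delta_{\max})}$ is not renormalized, so $L^T=M(T,1)L$ holds identically, and the ``real number multiple'' in the statement only accommodates replacing $L$ by its normalization in the simplex; column sums (which are $1$ or $2$) have nothing to do with it.
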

 Let $M_{\Delta}=M\mathbb{R}_d^+ \cap \mathring{\Delta}_d$. Recall $\mathring{\Delta}_d$ is the interior of the simplex in $\mathbb{R}^d$.
\begin{lem}\label{region} An IET with lengths contained in $M(T,1)_{\Delta}$
 and permutation $\pi$ has the same first step of Rauzy-Veech induction as $T$.
 \end{lem}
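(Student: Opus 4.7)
The plan is to translate the condition ``first step is $a$'' (respectively $b$) into a concrete strict inequality on the length vector of an IET, and then to check that this inequality is inherited by any IET whose length vector lies in $M(T,1)_{\Delta}$.

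First, I would observe that for any $S_{L,\pi}$ of total length one, the rightmost discontinuity of the map is $\sum_{k<d}l_k=1-l_d$, while inspecting the image intervals shows that the rightmost discontinuity of the inverse is $1-l_{\pi^{-1}(d)}$. Hence the first step of Rauzy-Veech induction is $a$ precisely when $l_d<l_{\pi^{-1}(d)}$ and is $b$ precisely when $l_d>l_{\pi^{-1}(d)}$. Irreducibility of $\pi$ forces $\pi^{-1}(d)\neq d$, so these two cases are mutually exclusive and cover an open dense set of positive length vectors.

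Now let $T''=S_{L'',\pi}$ with $L''\in M(T,1)_{\Delta}$; by definition we may write $L''=c\,M(T,1)L'$ for some $L'\in\mathbb{R}_+^d$ and some $c>0$. A direct case analysis from the explicit matrices finishes the argument. If $T$ performed step $a$, then reading off $M(T,1)$ entrywise gives $l''_d=c\,l'_{\pi^{-1}(d)}$ while $l''_{\pi^{-1}(d)}=c\bigl(l'_{\pi^{-1}(d)}+l'_{\pi^{-1}(d)+1}\bigr)$, the index $\pi^{-1}(d)+1$ being at most $d$ by irreducibility. Since $l'_{\pi^{-1}(d)+1}>0$, we conclude $l''_d<l''_{\pi^{-1}(d)}$, so $T''$ also performs step $a$. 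If $T$ performed step $b$, then $M(T,1)=I+E_{d,\pi^{-1}(d)}$ (an elementary matrix), so $l''_d-l''_{\pi^{-1}(d)}=c\,l'_d>0$, and $T''$ performs step $b$.

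The main (mild) obstacle is notational bookkeeping: tracking which rows of $M(T,1)$ are affected by the indices $d$ and $\pi^{-1}(d)$, and making sure to distinguish the two cases. There is no analytic difficulty, since the entire argument is a direct verification from the piecewise definitions of $M(T,1)$, relying only on the strict positivity of every entry of $L'$ that is built into the definition of $M(T,1)_{\Delta}$. Since the letter of the first step together with the permutation $\pi$ determine the new permutation via formulas (4) and (6), agreement of letters is enough to conclude that $T$ and $T''$ share the same first step of Rauzy-Veech induction.
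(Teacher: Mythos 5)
Your proof is correct and is exactly the ``immediate calculation'' the paper leaves unproved: translate step $a$ (resp.\ $b$) into $l_d<l_{\pi^{-1}(d)}$ (resp.\ $l_d>l_{\pi^{-1}(d)}$), and check entrywise that $M(T,1)$ sends the open positive cone into length vectors satisfying the same strict inequality. One minor remark: in the step-$a$ case the paper's printed row $i=d$ seems to be off by one (consistency with Lemma \ref{one step} and the permutation formula forces $l''_d=c\,l'_{\pi^{-1}(d)+1}$ rather than $c\,l'_{\pi^{-1}(d)}$), but since $l''_{\pi^{-1}(d)}=c\bigl(l'_{\pi^{-1}(d)}+l'_{\pi^{-1}(d)+1}\bigr)$ strictly dominates either reading, your conclusion is unaffected.
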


We define the $n^{\text{th}}$ matrix of Rauzy-Veech induction by $$M(T,n)=M(T,n-1)M(R^{n-1}(T),1).$$
Likewise, we define $I^{(n)}(T):=I^{(1)}(R^{n-1}(T))$. We will often denote this by $I^{(n)}$.
 It follows from Lemma \ref{region} that for an IET with length vector in $M(T,n)_{\Delta}$ and permutation $\pi$ the first $n$ steps of Rauzy-Veech induction agree with $T$.
 If $M$ is any matrix, $C_i(M)$ denotes the $i^{th}$ column and $C_{max}(M)$ denotes the column with the largest sum of entries.
 Let $|C_i(M)|$ denote the sum of the entries in the $i^{th}$ column. Versions of the following lemma are well known and we provide a proof for completeness.
\begin{lem} \label{proscribed rv} If $M(R^n(T),k)$ is a positive matrix and $L= \frac {C_i(M(T,n+k))}{|C_i(M(T,n+k))|}$ then $S_{L, \pi(T)}$ 
 agrees with $T$ through the first $n$ steps of Rauzy-Veech induction.
\end{lem}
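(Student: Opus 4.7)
The plan is to reduce the statement to Lemma~\ref{region} applied $n$ times. The key identity, immediate from the recursive definition $M(T,n) = M(T,n-1)\,M(R^{n-1}(T),1)$, is the multiplicative relation $M(T,n+k) = M(T,n)\,M(R^n(T),k)$. Taking the $i$th column gives $C_i(M(T,n+k)) = M(T,n)\,C_i(M(R^n(T),k))$. By hypothesis $M(R^n(T),k)$ is a positive matrix, so its $i$th column is a vector $v \in \mathbb{R}^d_+$ with \emph{all} entries strictly positive.

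Consequently $L$, being a positive scalar multiple of $M(T,n)v$, lies in $M(T,n)\mathbb{R}^d_+$. To conclude $L \in M(T,n)_\Delta$ I only need to verify that $L$ lies in the interior simplex $\mathring\Delta_d$, i.e.\ has strictly positive entries. This follows from the explicit form of the elementary Rauzy-Veech matrices displayed just above: whether the step is of type $a$ or type $b$, every row of the one-step matrix contains at least one entry equal to $1$. Hence $M(T,n)$, being a product of such matrices, has no zero row, and $M(T,n)v$ is strictly positive whenever $v$ is. So $L \in M(T,n)_\Delta$.

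With this containment established, the conclusion follows by iterating Lemma~\ref{region}. One step of Rauzy-Veech induction applied to $S_{L,\pi(T)}$ agrees with the first step applied to $T$ (by Lemma~\ref{region} itself), and the resulting IET $R(S_{L,\pi(T)})$ has length vector that, after renormalization, sits inside the corresponding region $M(R(T),n+k-1)_\Delta$ for $R(T)$; one then repeats the same argument with $n$ replaced by $n-1$. After $n$ iterations the first $n$ steps of induction on $S_{L,\pi(T)}$ have been shown to coincide with those on $T$.

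I do not anticipate any substantial obstacle; the proof is essentially bookkeeping built on the multiplicative identity $M(T,n+k) = M(T,n)M(R^n(T),k)$. The one place where care is needed is in confirming that strict positivity of $v$ propagates through multiplication by the non-negative matrix $M(T,n)$, since the conclusion $L \in M(T,n)_\Delta$ requires $L$ to lie in the \emph{open} simplex—but this is immediate from the ``no zero row'' observation above.
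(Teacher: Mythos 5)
Your proposal is correct and follows essentially the same route as the paper: the paper likewise uses the positivity of $M(R^n(T),k)$ to place the relevant normalized column in $\mathring{\Delta}_d$ and then concludes by Lemma~\ref{region} together with induction, tracking the length vectors of $R^m(S_{L,\pi})$ via Lemma~\ref{one step}, which is the same bookkeeping your cocycle identity $M(T,n+k)=M(T,n)M(R^n(T),k)$ and no-zero-row observation carry out. The only cosmetic difference is that you frontload the containment $L\in M(T,n)_{\Delta}$ rather than quoting Lemma~\ref{one step} explicitly at each stage.
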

\begin{proof} By Lemma \ref{one step} the length vector for $R^m(S_{L, \pi})$ is $ \frac{C_i(M(R^m(T),n+k-m))}{|C_i(M(R^m(T),n+k-m))|}$ for any $m$ where $R^m(S_{L,\pi})$ is defined.
  By our assumption on the positivity of $M(R^n(T),k)$ the vector $\frac{C_i(M(R^n(T),k))}{|C_i(M(R^n(T),k))|}$ is contained in $\mathring{\Delta}_d$. The lemma follows by  Lemma \ref{region} and induction.
\end{proof}
For the proofs of Lemmas  \ref{sep by tower} and \ref{good is sep} in the next section some knowledge of Rokhlin towers is needed. We recall what is used here. Given $I_j^{(n)}(T)=I_j^{(n)}$ its Rokhlin tower is $\underset{i=0}{\overset{|C_j(M(T,n))|-1}{\cup}} T^i(I_j^{(n)})$. By the construction of Rauzy-Veech induction $T$ acts continuously on this set. Also $T^{|C_j(M(T,n))|}(I_j^{(n)})\subset I^{(n)}$. Rokhlin towers are used to recapture information about $T$ from $R^n(T)$.

Let $\mathfrak{R}(\pi)$ denote the set of permutations one can reach by some string of steps $a$ and $b$ from $\pi$.
Let $\Delta_{\mathfrak{R}(\pi)} $ denote the set of IETs with one of these permutations. The dependence on $\pi$ will be suppressed when there is no confusion.
Let $\measure$ denote Lebesgue measure on $\Delta_{\mathfrak{R}}$.

The next definition does not appear in \cite{gauss} (see \cite{ker}) but is important for the next section.
\begin{Bob}
 A matrix $M$ is called $\nu$-\emph{balanced} if $\frac 1 {\nu} <\frac {|C_i(M)|}{|C_j(M)|}<\nu$ for all $i$ and $j$.
\end{Bob}
 Notice that if $M$ is $\nu $-balanced then $|C_i(M)|>\frac {|C_{max}(M)|}{\nu}$.

\section{Proof of Theorem \ref{full measure thm}}\label{full sec}
The main goal of this section is establishing Theorem \ref{full measure thm} to complement Section \ref{rigid}.  Theorem \ref{full measure thm} is proved by Proposition \ref{full measure prop}, which requires a definition.
\begin{Bob} A standard sequence $\{a_i\}_{i=1}^{\infty}$ is called \emph{2-standard} if there exists $r \geq 2$ such that $$\{r^{i-1}a_{r^i}\}_{i=1}^{\infty}=a_r,ra_{r^2},r^2a_{r^3},...$$ is eventually decreasing.
\end{Bob}
\begin{rem} If $\{ia_i\}_{i=1}^{\infty}$ is eventually decreasing then $\bold{a}$ is 2-standard.
\end{rem}

\begin{prop} \label{full measure prop} There exists a full measure set of IETs $\mathcal{V}$ such that for any 2-standard sequence $\bold{a}$ and any $x \in [0,1)$, $\Leb\left( \LS B(T^i(x),a_i)\right)=1$.
\end{prop}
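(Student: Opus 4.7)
The plan is to define $\mathcal{V}$ in terms of Rauzy--Veech induction and exploit the Rokhlin tower structure it provides. By Veech's classical results on the Rauzy--Veech renormalization cocycle, there is a full measure set of IETs $T$ for which the matrices $M(T,n)$ are positive and $\nu$-balanced (for some $T$-dependent $\nu$) infinitely often, and moreover, by passing to sub-subsequences of induction steps, the maximum column sum $h_n := |C_{\max}(M(T,n))|$ can be arranged to grow along any prescribed exponential scale. I would take $\mathcal{V}$ to be the full measure set of $T$ for which, simultaneously for every integer $r \geq 2$, there is an induction subsequence $n_k = n_k(r)$ with $h_{n_k} \in [r^k, r^{k+1})$ and with $M(T, n_k)$ positive and $\nu(T)$-balanced.

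Fix $T \in \mathcal{V}$, a 2-standard sequence $\mathbf{a}$ with associated parameter $r$, and $x \in [0,1)$. At each step $n_k$, Lemma \ref{region} and the Rokhlin tower construction of Section \ref{rv ind} decompose $[0,1)$ (up to finitely many discontinuities) as a disjoint union of $d$ towers, each of height comparable to $r^k$ by balancedness, and with base subinterval $I_j^{(n_k)}$ of length comparable to $r^{-k}$. For $x$ in the interior of a tower level, the orbit segment $\{T^j x : 0 \leq j < h^x_k\}$ (where $h^x_k$ is the height of the tower containing $x$) passes through each level of its tower exactly once, producing $\asymp r^k$ points that are pairwise $\gtrsim r^{-k}/\nu$-separated and that together $\nu$-equidistribute in $[0,1)$.

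Applying the covering estimate of Lemma~\ref{separated} to this orbit segment, with $n \asymp r^k$, separation parameter $e$ depending on $\nu$, and ball radius $\delta \leq a_{r^{k+1}}$, yields
\[
\Leb\!\left(\bigcup_{i \in I_k} B(T^i x, a_i) \setminus S_{k-1}\right) \gtrsim \min\!\left(1,\, r^k a_{r^{k+1}}\right),
\]
where $I_k$ is an interval of integers of length comparable to $r^k$ and $S_{k-1}$ denotes the mass covered at previous scales. The 2-standard hypothesis gives $r^k a_{r^{k+1}} \geq c \, r^{k-1} a_{r^k}$ for all sufficiently large $k$, and since $\sum_k r^k a_{r^k}$ is comparable to $\sum_i a_i = \infty$, iterating the scale-$r^k$ argument across a divergent-sum chain of $k$'s produces a uniform positive lower bound for $\Leb\!\left(\bigcup_{i \geq N} B(T^i x, a_i)\right)$, independent of $N$. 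Proposition~\ref{LS reduction} then promotes this positive-measure statement to the full-measure conclusion $\Leb\!\left(\LS B(T^i x, a_i)\right) = 1$.

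The main obstacle I anticipate is the \emph{every-$x$} quantifier: the orbit of a discontinuity may lie on a tower-level boundary at every scale, and the clean equidistribution argument above breaks at such points. I would address this via the fact that $T$ is piecewise isometric, so the orbit of a boundary point agrees one-sidedly with that of nearby interior points and inherits the covering estimate from them; alternatively, one can run the tower analysis in each of the sub-towers meeting at the exceptional point. A secondary technical subtlety is verifying that Veech's results do give the uniform-in-$r$ control required to define $\mathcal{V}$ as a single full measure set, which should follow from the strong recurrence of Rauzy--Veech induction on the normalized simplex together with a countable intersection over $r$.
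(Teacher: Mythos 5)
There are two genuine gaps. First, your set $\mathcal{V}$ is not of full measure: you demand a $\nu(T)$-balanced renormalization time in \emph{every} window $[r^k,r^{k+1})$, simultaneously for all $r$. Balanced times cannot be ``arranged'' — already for $d=2$ (rotations), almost every $\alpha$ has unbounded partial quotients, so for any fixed $\nu$ infinitely many exponential windows contain no $\nu$-balanced matrix. The Kerckhoff-type estimate only yields a \emph{positive lower density} of scales $i$ at which a balanced matrix with $|C_{\max}|\in[2^i,2^{i+1}]$ occurs (Corollary \ref{bal den}), and this is exactly where the 2-standard hypothesis is needed: for a decreasing sequence $r^{k-1}a_{r^k}$ with divergent sum, the sum restricted to a positive lower density set of $k$ still diverges, so the covering gain can be accumulated along the good scales only (Proposition \ref{step 1}). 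Your stated use of 2-standardness, ``$r^k a_{r^{k+1}}\geq c\,r^{k-1}a_{r^k}$,'' is backwards (2-standard gives eventual \emph{decrease} of $r^{k-1}a_{r^k}$) and does not play the role the hypothesis actually serves.

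Second, and more seriously, the separation claim at the heart of your covering estimate does not follow from balancedness. Balancedness of $M(T,n_k)$ controls the tower \emph{heights} (column sums), not the widths of the induced intervals $I_j^{(n_k)}$: some towers may be arbitrarily thin, so the fact that an orbit segment passes through distinct levels does not make its points $\gtrsim r^{-k}$-separated, and points whose orbits shadow the orbit of a discontinuity can cluster. This is precisely what condition (3) in the definition of $i$-good supplies: the normalized induced IET must lie in a fixed positive-measure set $U_e$ of IETs whose short orbits are $e$-separated for every starting point, and hitting $U_e$ with positive density along the balanced times requires Kerckhoff's distortion estimate (Proposition \ref{bal}, Corollary \ref{often hit}). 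One then needs Lemmas \ref{sep by tower} and \ref{good is sep} to convert this base separation into separation of $\{x,Tx,\dots\}$ for \emph{every} $x$, after discarding a controlled number ($O(d\,2^{i})$) of orbit points lying near the orbits of discontinuities and endpoints of $I^{(n_0)}$. Your proposal has no substitute for this mechanism; the remark about handling $x$ on a tower-level boundary ``one-sidedly'' addresses only the trivial part of the every-$x$ quantifier, not the clustering caused by thin towers and singular orbits.
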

Proposition \ref{full measure prop} implies Theorem \ref{full measure thm}. This is because if $\{ia_i\}_{i=1}^{\infty}$ is eventually decreasing then $a_i$ is 2-standard. If $\{ia_i\}_{i=1}^{\infty}$ is eventually increasing then some 2-standard sequence is term by term less than it.



Next is a criterion for an IET $T$ to have $\Leb \left(\LS B(T^ix,a_i)\right)=1$ for every 2-standard sequence $\bold{a}$. Lemma \ref{good is sep} and Proposition \ref{step 2} prove that $\measure$-almost every IET satisfies the criterion.

\begin{prop} \label{step 1} If $\{a_i\}_{i=1}^{\infty}$ is 2-standard and $T$ is a $\Leb$-ergodic IET, such that there exists $r>1,c>0,e>0$ and a positive lower density set of natural numbers $k$ such that at least $cr^k$ elements of $\{T^{r^k}x,T^{r^k+1}x,...,T^{r^{k+1}}x\}$ are $\frac e {r^k}$ separated then $\Leb\left(\LS B(T^i(x),a_i)\right)=1$.
\end{prop}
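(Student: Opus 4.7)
The plan is to mimic the contradiction template used in Proposition~\ref{other}. Suppose $\Leb(\LS B(T^ix,a_i))<1$; by the ergodicity of $T$ and Proposition~\ref{LS reduction} the measure is forced to equal $0$, hence $\Leb\!\left(\bigcup_{i\geq N}B(T^ix,a_i)\right)\to 0$ as $N\to\infty$. The goal is to exhibit, along a sparse subsequence of the good indices, a divergent lower bound for the measure of this same set, contradicting its eventual smallness.

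First I would draw two consequences from 2-standardness. The defining relation that $r^{i-1}a_{r^i}$ is eventually non-increasing gives $a_{r^{k+1}}\leq a_{r^k}/r$, so $r^ka_{r^k}$ is itself eventually non-increasing. Property~(1) of the introduction translates divergence of $\sum a_i$ into divergence of $\sum_k r^ka_{r^k}$, and property~(2) then gives divergence along any positive lower density subset of $\mathbb{N}$. A one-step reindexing yields $\sum_{k\in\mathcal{K}'}r^ka_{r^{k+1}}=\infty$ for any such $\mathcal{K}'$. By property~(4) I may also assume $na_n\to0$, so $a_{r^{k+1}}<e/(2r^k)$ for all sufficiently large $k$.

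Now fix $\epsilon_2=ce/4$ and a large integer $L$ with $r^{L-1}\geq 16/c$. Taking $N$ so that $\Leb\!\left(\bigcup_{i\geq N}B(T^ix,a_i)\right)<\epsilon_2$, thin the hypothesised good set $\mathcal{K}$ to a subsequence $\mathcal{K}'=\{k_1<k_2<\cdots\}$ with $r^{k_1}\geq N$, $a_{r^{k+1}}<e/(2r^k)$ for every $k\in\mathcal{K}'$, and gaps $k_{j+1}-k_j\geq L$. Set $V_k=\bigcup_{i\in[r^k,r^{k+1})}B(T^ix,a_i)$ and $V^*_J=\bigcup_{j\leq J}V_{k_j}$. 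For each $J$ apply Lemma~\ref{separated} to the $cr^{k_J}$ points in block $k_J$ that are $\tfrac{e}{r^{k_J}}$-separated (so the lemma's ``$e$'' is $ce$), with radius $\delta=a_{r^{k_J+1}}$ and $S=V^*_{J-1}$. The spacing bound $k_j\leq k_J-(J-j)L$ makes the number of balls composing $V^*_{J-1}$ at most $r\sum_{j<J}r^{k_j}\leq 2r^{k_J-L+1}$, which also bounds the interval count $t$; moreover $\Leb(V^*_{J-1})<\epsilon_2$. A short arithmetic check using the choices of $\epsilon_2$ and $L$ then gives
$$\Leb(V_{k_J}\setminus V^*_{J-1})>\tfrac{c}{2}\,r^{k_J}a_{r^{k_J+1}}.$$
Summing the disjoint increments, $\Leb(V^*_J)>\tfrac{c}{2}\sum_{j\leq J}r^{k_j}a_{r^{k_j+1}}\to\infty$, while $V^*_J\subseteq\bigcup_{i\geq N}B(T^ix,a_i)$ has measure below $\epsilon_2$. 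This contradiction proves the proposition.

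The main obstacle is controlling the interval count $t$ in Lemma~\ref{separated}: the naive choice $S=\bigcup_{i=N}^{r^{k}-1}B(T^ix,a_i)$ may have up to $r^k$ components, which dwarfs the $cr^k$ fresh separated points whenever $c<2$ and makes the separation estimate vacuous. The sparsity device, taking only $k\in\mathcal{K}'$ with gaps at least $L$ and comparing each new block against the union of previously chosen blocks only, shrinks the ambient interval count to $O(r^{k_J-L})$, which is negligible against $cr^{k_J}$ once $L$ is large. The role of 2-standardness is precisely that $r^ka_{r^k}$ being (eventually) decreasing lets the surviving series along the thinned $\mathcal{K}'$ still diverge; without the 2-standard hypothesis a positive-density subsequence could easily miss the large terms.
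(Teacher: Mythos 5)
Your proof is correct and takes essentially the same route as the paper's: reduce to showing positive measure via Proposition \ref{LS reduction}, thin the positive-density set of good exponents so that the interval count $t$ in Lemma \ref{separated} is negligible against the $cr^k$ separated points, and sum the resulting disjoint increments, which diverge along the thinned (still positive lower density) set precisely because 2-standardness makes $r^ka_{r^{k+1}}$ eventually decreasing. The only immaterial difference is that you subtract only the previously selected blocks $V^*_{J-1}$, whereas the paper subtracts all balls with indices up to the end of the previous selected block; both choices control $t$ and keep the increments disjoint.
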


\begin{proof} It suffices by the ergodicity of $T$ to show $\Leb\left(\LS B(T^i(x),a_i)\right)>0$
(Proposition \ref{LS reduction}).
 Assume $\{k_i\}_{i=1}^{\infty}$ is a sequence of positive density satisfying the condition of the proposition.
 As before we want to consider
$$\Leb\left(\underset{i=r^k}{\overset{r^{k+1}}{\cup}} B(T^ix,a_i)\backslash \underset{i=N}{\overset{r^{k}}{\cup}}B(T^ix,a_i)\right)$$
  when $\Leb\left(\underset{i=N}{\overset{r^{k}}{\cup}}B(T^ix,a_i)\right)$  is small.
   However, this approach does not work if $c< \frac 1 r$.
   To work around this we will only pay attention to some of the $k_i$.
   Let $l_1=k_1$ and inductively let
 $$l_{n+1}= \min \{k_i: r^{k_i}>3c^{-1}r^{l_n+1}\}.$$
    Notice that $\{l_i\}_{i=1}^{\infty}$ is a set of positive density.
 Choose $\epsilon<\frac 1 4 c e$.
If $\Leb\left(\underset{i=N}{\overset{r^{l_j}}{\cup}}B(T^i(x),a_i)\right)<\epsilon$ then Lemma \ref{separated} implies
\begin{multline}\lefteqn{\Leb\left(\underset{i=r^{l_{j}}}{\overset{r^{l_{j}+1}}{\cup}} B(T^ix,a_i)\backslash   \underset{i=N}{\overset{r^{l_{j-1}+1}}{\cup}}B(T^ix,a_i)\right)}\\>(cr^{l_{j}}-2r^{l_{j-1}+1}-\frac {\epsilon}{e}r^{l_{j}})a_{r^{l_{j}+1}}>\frac 1 4 c r^{l_{j}+1}a_{r^{l_{j}+1}}.
\end{multline}
Observe that $a_r,ra_{r^2},...$ is a decreasing sequence with divergent sum and thus $\underset{k \in S } {\sum}r^ka_{r^{k+1}}=\infty$ for any set $S$ of positive lower density. This implies that $\Leb \left(\LS B(T^ix,a_i)\right)> \epsilon$ and the proposition follows.
\end{proof}
\begin{rem} This proposition is false if one only assumes that the set of $k$ has positive upper density.
\end{rem}
Next we will show that almost every IET satisfies the hypothesis of the Proposition \ref{step 1}.
\begin{Bob} Given constants $\nu$ and  $e>0$ we say an IET $T$ is $i$-good if:
\begin{enumerate}
\item There exists $n_0$ such that $M(T,n_0)$ is $\nu$-balanced.
\item $|C_{max}(M(T,n_0))| \in [2^i,2^{i+1}]$.
\item Let $T_{n_0}$ denote the IET $R^{n_0}(T)$ (which is defined on $I^{(n_0)}$). For each $x$ the points of
$\{T_{n_0}(x),T_{n_0}^2(x),...,T_{n_0}^{\lceil 20\nu^2d\rceil}(x)\}$
are $\frac {e\Leb(I^{(n_0)})} {20\nu^2d}$ separated.
\end{enumerate}
\end{Bob}
\begin{rem} The definition of $i$-good depends on $\nu$ and $e$ but for readability this is suppressed. One should think that in each Rauzy class we have an appropriate $\nu$ and $e$ but these may change in different Rauzy classes.
\end{rem}
We now proceed with two lemmas which show that the $i$-good condition implies a separation condition of the type in Proposition \ref{step 1}.


\begin{lem} \label{sep by tower} Let $T$ be $i$-good and $n_0,e,\nu$ be as in the definition and $r \neq r'<2^{i}\nu d$ be natural numbers. If $d(T^rx,T^{r'}x)<\frac{e}{20\nu^2d2^{i+1}}$ then
$T^r(x) \in \underset{k=0}{\overset{2^{i+1}}{\cup}}B(T^{k}u,\frac{e}{20\nu^2d2^{i+1}}) $
 and
$T^{r'}x\in \underset{k=0}{\overset{2^{i+1}}{\cup}}B(T^{k}v,\frac{e}{20\nu^2d2^{i+1}})$
where $u,v$ are either discontinuities of $R^{n_0}(T)$ or 0 or $\lambda(I^{(n_0)})$.
\end{lem}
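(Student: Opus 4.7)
The plan is to exploit the Rokhlin tower decomposition of $[0,1)$ induced by $T_{n_0}:=R^{n_0}(T)$ acting on the transversal $I^{(n_0)}$: up to a finite set, $[0,1)=\bigsqcup_j\bigsqcup_{k=0}^{|C_j(M(T,n_0))|-1}T^k(I_j^{(n_0)})$, and $T$ acts by translation on each level. First, I would write $T^s(x)=T^{k_s}(q_s)$ with $q_s\in I_{j_s}^{(n_0)}$ and $0\le k_s<|C_{j_s}(M(T,n_0))|\le 2^{i+1}$, and note that $q_s=T_{n_0}^{m_s}(p_0)$, where $p_0=T^{s_0}(x)$ is the first $T$-iterate of $x$ landing in $I^{(n_0)}$. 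The $\nu$-balanced condition gives $|C_j(M(T,n_0))|\ge 2^i/\nu$, so consecutive returns to $I^{(n_0)}$ are separated in time by at least $2^i/\nu$; hence for $s<2^i\nu d$ one has $m_s<\nu^2 d\le\lceil 20\nu^2 d\rceil$, so the base points $q_r,q_{r'}$ lie among the first $\lceil 20\nu^2 d\rceil$ $T_{n_0}$-iterates of $p_0$ to which the separation hypothesis in the definition of $i$-good applies.

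Next I would set up the scale comparison. Since the Rokhlin towers tile $[0,1)$ and each has height $\le 2^{i+1}$, the transversal length satisfies $\lambda(I^{(n_0)})\ge 2^{-(i+1)}$, so the separation guaranteed by $i$-goodness is
\[
\frac{e\,\lambda(I^{(n_0)})}{20\nu^2 d}\ \ge\ \frac{e}{20\nu^2 d\,2^{i+1}}.
\]
Now I would do the case analysis. If $T^r(x)$ and $T^{r'}(x)$ lie in the same Rokhlin level $T^{k_r}(I_{j_r}^{(n_0)})=T^{k_{r'}}(I_{j_{r'}}^{(n_0)})$, then $T^{k_r}$ acts isometrically on $I_{j_r}^{(n_0)}$, so $d(q_r,q_{r'})=d(T^r(x),T^{r'}(x))<\frac{e}{20\nu^2 d\,2^{i+1}}$. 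But $q_r\ne q_{r'}$ (otherwise $r=r'$) and they are distinct $T_{n_0}$-iterates of $p_0$ within the first $\lceil 20\nu^2 d\rceil$, contradicting the separation bound. Hence $T^r(x)$ and $T^{r'}(x)$ lie in distinct Rokhlin levels.

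Finally, two points lying in distinct levels of the Rokhlin partition and within mutual distance $<\frac{e}{20\nu^2 d\,2^{i+1}}$ must each be within that distance of the boundary of their own level. The boundary of $T^{k_r}(I_{j_r}^{(n_0)})$ is $T^{k_r}(\partial I_{j_r}^{(n_0)})$, and $\partial I_{j_r}^{(n_0)}\subset\{0,\lambda(I^{(n_0)})\}\cup\{\text{discontinuities of }T_{n_0}\}$, so taking $u$ to be the relevant endpoint of $I_{j_r}^{(n_0)}$ gives $T^r(x)\in B(T^{k_r}u,\frac{e}{20\nu^2 d\,2^{i+1}})\subset\bigcup_{k=0}^{2^{i+1}}B(T^k u,\frac{e}{20\nu^2 d\,2^{i+1}})$, and symmetrically for $T^{r'}(x)$ with some $v$. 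The main obstacle I anticipate is purely bookkeeping: confirming that the level-boundaries of the Rokhlin towers inside $[0,1)$ really are of the form $T^k u$ with $u$ as specified, rather than spurious points arising from how different towers fit together, and making sure the height index $k_r$ stays in the range $[0,2^{i+1}]$ claimed by the statement (which follows from the column-sum bound but must be tracked through the case where the closest boundary is the \emph{top} endpoint of a level).
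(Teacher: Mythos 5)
Your argument is essentially the paper's: the paper proves the contrapositive by pulling the ball $B(T^rx,\frac{e}{20\nu^2d2^{i+1}})$ back isometrically down its Rokhlin level to the transversal and contradicting the $i$-good separation, which is exactly your same-level case, while your different-level case is just the stated conclusion read off directly; the ingredients (tower structure, $\nu$-balancedness to bound the number of returns by roughly $\nu^2 d$, and $\lambda(I^{(n_0)})\geq 2^{-(i+1)}$) coincide. The boundary bookkeeping you flag (a level's right endpoint being a one-sided limit rather than literally $T^k u$) is resolved by taking $u$ to be the left endpoint of the adjacent level, and is no more of an issue than in the paper's own terse treatment.
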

\begin{proof} Let $\epsilon>0$, $\tilde{B}_x(n,\epsilon)=\underset{k=0}{\overset{n}{\cup}}B(T^kx,\epsilon)$ and $$\hat {B}_{m,r}(\epsilon)=\underset{v \text{ a discont of } T_{m}}{\cup} \tilde{B}_{v}(r,\epsilon)$$  and assume that
$$y\notin \hat {B}_{n_0,2^{i+1}}(\epsilon)
\cup \underset{k=0}{\overset{2^{i+1}}{\cup}}B(T^{k}0,\epsilon) \cup
\underset{k=0}{\overset{2^{i+1}}{\cup}}B\left(T^{k}\left(\Leb(I^{(n_0)})\right),\epsilon\right).$$
 (Note for $\epsilon$ not small enough there are no such $y$.) Because $|C_i(M(T,n_0))|<2^{i+1}$ if $k_0=\min\{k:T^{-k}(y)\in I^{(n_0)}\}$ we have that $T^{-k_0}B(y,\epsilon)\subset I_j^{(n_0)}$ for some $j$ and that $T^{-i}$ acts continuously (and thus isometrically) on $B(y,\epsilon)$ for all $0 \leq i<k_0$. Let $\epsilon_0=\frac{e}{20\nu^2d2^{i+1}}$ and if $d(T^rx,T^{r'}x)<\epsilon_0$,
$$
T^rx \notin \hat {B}_{n_0,2^{i+1}}(\epsilon_0) \cup
\underset{k=0}{\overset{2^{i+1}}{\cup}}B(T^{k}0,\epsilon_0) \cup \underset{k=0}{\overset{2^{i+1}}{\cup}}B\left(T^{k}\left(\Leb(I^{(n_0)})\right),\epsilon_0\right)
$$
${k=\min\{0 \leq l:T^{-l}(T^rx)\in I^{(n)}\}}$ and ${k'=\min\{0 \leq
l:T^{-l}(T^{r'}x)\in I^{(n)}\}}$ then
$d(T^{-k}T^rx,T^{-k'}T^{r'}x)=d(T^rx,T^{r'}x)$. This contradicts the
$i$-good assumption because if for instance $r<r'$ we have
$$T^{-k'}T^{r'}x \in
\{T^{-k}T^rx,T_{n_0}(T^{r-k}x),...,T_{n_0}^{\lceil
20\nu^2d\rceil}(T^{r-k}x)\}.$$
\end{proof}

\begin{lem} \label{good is sep} If $T$ is $i$-good then at least $2^i 20\nu d -2(d+1)2^{i+1}$
of the elements of $\{x, Tx,...,T^{\lceil 2^{i+1}20 \nu d\rceil
}x\}$ are at least  $\frac e {20\nu^2d} (2^{i+1})^{-1}$ separated.
\end{lem}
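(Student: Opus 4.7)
Set $\epsilon_0 := \frac{e}{20\nu^2 d \cdot 2^{i+1}}$ and $N := \lceil 2^{i+1} \cdot 20\nu d\rceil$, and write $P := \{T^r x : 0 \leq r \leq N\}$. The plan is to remove from $P$ the ``bad'' set $\mathcal{B}$ of Lemma~\ref{sep by tower}---the union of $\epsilon_0$-balls around $T^k u$ for $0 \leq k \leq 2^{i+1}$ and $u$ ranging over the $d-1$ discontinuities of $T_{n_0}$ together with $\{0, \Leb(I^{(n_0)})\}$---and then argue that $G := P \setminus \mathcal{B}$ is pairwise $\epsilon_0$-separated and of the claimed size. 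Note that $\mathcal{B}$ is a union of at most $(d+1)(2^{i+1}+1) \leq 2(d+1) 2^{i+1}$ balls of radius $\epsilon_0$.

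The separation of $G$ proceeds in two regimes. For a pair of indices $r < r'$ with $r' - r < 2^i \nu d$, Lemma~\ref{sep by tower} applied to the shifted orbit starting at $T^r x$ yields that $d(T^r x, T^{r'} x) < \epsilon_0$ forces both endpoints into $\mathcal{B}$; thus $G$-points close in time are automatically $\epsilon_0$-apart. For $r' - r \geq 2^i \nu d$ I invoke the $i$-good separation of $T_{n_0}$-orbits: since $M(T, n_0)$ is $\nu$-balanced with $|C_{\max}(M(T,n_0))| \leq 2^{i+1}$, we have $\Leb(I^{(n_0)}) \geq 2^{-(i+1)}$ and hence $\frac{e \Leb(I^{(n_0)})}{20\nu^2 d} \geq \epsilon_0$. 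The orbit $P$ contributes at most $\lceil 20\nu^2 d\rceil$ visits to $I^{(n_0)}$ (consecutive visits being spaced by a tower height $\geq 2^i/\nu$), so the base-level visits of $P$ to $I^{(n_0)}$ are pairwise $\epsilon_0$-separated. For orbit points in $G$, the condition of not being in $\mathcal{B}$ ensures that the backward iterate $T^{-l}$ from the appropriate tower level acts isometrically on an $\epsilon_0$-neighborhood, so this base-level separation lifts to the upstairs level.

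For the cardinality estimate, $|P| \geq 2^{i+1} \cdot 20 \nu d$ is about twice the target $2^i \cdot 20 \nu d - 2(d+1) 2^{i+1}$, giving ample slack. Since $\mathcal{B}$ consists of at most $2(d+1) 2^{i+1}$ balls of radius $\epsilon_0$, and pushing orbit points in a common ball down to $I^{(n_0)}$ and applying the $T_{n_0}$-separation bounds the multiplicity of orbit points of $P$ per ball by $O(1)$, we obtain $|\mathcal{B} \cap P| = O((d+1) 2^{i+1})$ and hence $|G| \geq |P| - O((d+1) 2^{i+1}) \geq 2^i \cdot 20 \nu d - 2(d+1) 2^{i+1}$. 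The main obstacle is making the lifting step in the separation argument work uniformly across different tower pieces and different tower levels: two orbit points $T^r x, T^{r'} x \in G$ at distinct tower levels $l \neq l'$ above bases $y \neq y' \in I^{(n_0)}$ (possibly in different tower pieces) cannot be compared through a single isometric push-down, so an auxiliary application of Lemma~\ref{sep by tower} at a shifted starting time or a finer decomposition of the orbit by tower combinatorics is needed to close out the second regime of the separation argument.
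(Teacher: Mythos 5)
Your first regime is fine, but the second regime is a genuine gap, and you flag it yourself: you never close the case $r'-r\ge 2^{i}\nu d$. The difficulty is not merely technical. Your count of visits is wrong by a factor of two: since the minimal tower height over $I^{(n_0)}$ is only $2^{i}\nu^{-1}$, the segment $P$ of length $N=\lceil 2^{i+1}20\nu d\rceil$ can meet $I^{(n_0)}$ up to roughly $40\nu^{2}d$ times, whereas condition (3) of $i$-goodness only guarantees pairwise separation for $\lceil 20\nu^{2}d\rceil$ \emph{consecutive} visits. So for pairs of points of $P$ lying over visits that are far apart, the base-level separation you want to lift is itself not available, and the lift across different tower levels and pieces is exactly what you admit cannot be done by a single isometric push-down. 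The cardinality estimate has a matching problem: the assertion that each ball of $\mathcal{B}$ meets $P$ in $O(1)$ points is unproved (points inside those balls are precisely the ones for which the push-down comparison is uncontrolled), and an unspecified $O(\cdot)$ cannot yield the exact bound $2^{i}20\nu d-2(d+1)2^{i+1}$.

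The paper avoids all of this by not attempting to separate all of $P\setminus\mathcal{B}$. It only considers the orbit points lying over the first $\lceil 20\nu^{2}d\rceil$ visits, i.e.\ the images of $T_{n_0}x,\dots,T_{n_0}^{\lceil 20\nu^{2}d\rceil}x$ under $T$ before first return to $I^{(n_0)}$. This window is exactly what condition (3) covers, and for two such points the relevant parameter in the argument of Lemma~\ref{sep by tower} is the number of visits to $I^{(n_0)}$ between them (at most $\lceil 20\nu^{2}d\rceil$), not the raw time gap, so the regime you could not handle never arises: any two of these points that are $\frac{e}{20\nu^{2}d}2^{-(i+1)}$-close must lie within that distance of the length-$2^{i+1}$ forward orbits of the $d-1$ discontinuities of $T_{n_0}$, of $0$, or of $\Leb(I^{(n_0)})$, which excludes at most $2(d+1)2^{i+1}$ points of the collection. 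The size of the collection then comes for free from the minimal tower height: at least $\lceil 20\nu^{2}d\rceil\cdot 2^{i}\nu^{-1}\ge 2^{i}20\nu d$ points lie over those visits, so no multiplicity-per-ball estimate is needed. To rescue your write-up, replace $P\setminus\mathcal{B}$ by this visit-window subset and rerun your regime-one argument with the visit count as the controlling parameter.
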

The number of points is positive because $\nu>1$.
\begin{rem} The idea of Lemmas \ref{sep by tower} and \ref{good is sep}  is to make precise a straightforward observation.
 Because $T^i$ is a (continuous) isometry on $I_j^{(n)}$ for $0\leq i<|C_i(M(T,n))|$
 we have that if $y$ is not too close to the early part of the orbit of a singularity and if $x$ is close to $y$
  then $x$ and $y$ pull back under $T$ to close points in $I_j^{(n)}$.
   Therefore the $i$-good condition gives separation for the orbit of points under $T$ away from discontinuities of $T^{2^{i+1}}$ (Lemma \ref{sep by tower}). Therefore if we examine enough points a positive proportion must be separated (Lemma \ref{good is sep}).
\end{rem}
\begin{proof} There are $d-1$ discontinuities of $T_{n_0}$ and
by the $i$-good condition the Rokhlin towers over any sub-interval of
$I^{(n_0)}$ have at most $2^{i+1}$ levels. So there are at most $2^{i+1}(d+1)$
 points that the previous lemma does not rule out being $\frac e {20\nu^2d} (2^{i+1})^{-1}$ separated.
  Also by the $\nu$-balanced condition the Rokhlin towers over any sub-interval of $I^{(n_0)}$ have at least $2^{i}\nu^{-1}$
   levels and therefore the images of $\{T_{n_0}(x),T_{n_0}^2(x),...,T_{n_0}^{\lceil 20\nu^2 d\rceil }(x)\}$ under $T$ before first return to $I^{(n_0)}$ have at least $20\nu^2d(2^i\nu^{-1})=20\nu d2^i$ elements.
\end{proof}


The proof of Proposition \ref{full measure prop} is completed by the following proposition which shows the almost every IET is $i$-good for a positive density set of $i$. By Lemma \ref{good is sep} these IETs satisfy the hypothesis of Proposition \ref{step 1}.
\begin{prop} \label{step 2} There exists a constant $c_{\mathfrak{R}}'>0$ such that for $\measure$-almost every IET $T$, $$\{ i: T \text{ is i-good}\}\subset\mathbb{N}$$ has lower density at least $c_{\mathfrak{R}}'>0$ where $c_{\mathfrak{R}}'$ is a constant depending only on $\mathfrak{R}$ (and $e, \nu$ in the definition of $i$-good).
\end{prop}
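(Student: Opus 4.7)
The plan is to combine ergodic theory of Rauzy--Veech induction with a simple matrix product estimate to show that the $i$-good condition is realized at a positive density of scales $i$.

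First, I construct a template set $G\subset\Delta_{\mathfrak{R}}$ of positive $\measure$-measure whose renormalization visits manufacture $i$-good times. I choose a word $w\in\{a,b\}^{k_0}$ in the Rauzy--Veech alphabet whose associated matrix $M(w)$ is strictly positive with every entry in $[1,\nu/2]$, for some $\nu=\nu(\mathfrak{R})$; the existence of such a word in every Rauzy class is standard. By Lemma \ref{region}, the set $A_w$ of IETs whose first $k_0$ Rauzy--Veech steps spell $w$ is open with $\measure(A_w)>0$. The key matrix estimate is that whenever $M=M_1\cdot M(w)$,
\[
\frac{|C_j(M)|}{|C_{j'}(M)|}=\frac{\sum_k M(w)_{kj}\,|C_k(M_1)|}{\sum_k M(w)_{kj'}\,|C_k(M_1)|}\in\bigl[1/\nu,\nu\bigr],
\]
so the product is $\nu$-balanced regardless of the balance of $M_1$. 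I then refine $A_w$ to $G$ by additionally requiring that the rescaled version of $R^{k_0}(T)$ satisfy the separation condition in clause (3) of the $i$-good definition with a uniform constant $e>0$. Since this condition is open in the target simplex and fails only on a subvariety (collisions of iterated discontinuities), $\measure(G)>0$ once $e$ is small enough.

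Second, I invoke the ergodicity of Rauzy--Veech via Zorich's acceleration $Z$, which preserves an absolutely continuous ergodic probability measure $\mu_Z$ on $\Delta_{\mathfrak{R}}$. Birkhoff's theorem applied to $G$, together with Kac's lemma (finite expected Zorich return time), implies that for $\measure$-almost every $T$ the set $\mathcal{N}(T):=\{n:R^n(T)\in G\}$ has positive lower density $\alpha=\alpha(\mathfrak{R})>0$ in the Rauzy--Veech time $n$. For each $n\in\mathcal{N}(T)$, the time $n_0:=n+k_0$ satisfies conditions (1) and (3) of the $i$-good definition (with $i$ determined by (2)), by the construction of $G$ and the matrix product estimate.

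Third, I convert density in $n$ to density in the scale $i$. The sequence $i(n):=\lfloor\log_2|C_{\max}(M(T,n))|\rfloor$ is non-decreasing (induction only grows column sums) and increments by at most $1$ per Rauzy--Veech step (a single step replaces one column by a sum of two columns). By positivity of the top Lyapunov exponent of the Rauzy--Veech cocycle, $i(n)/n$ has a positive almost-sure limit $\lambda/\log 2>0$. Thus up to Rauzy--Veech time $N$ only $\Theta(N)$ distinct scales are realized, while at least $\alpha N$ of them correspond to times in $\mathcal N(T)$; a direct comparison (or the ratio ergodic theorem) extracts a constant $c'_{\mathfrak{R}}>0$, depending only on $\mathfrak R$, so that $\{i:T\text{ is }i\text{-good}\}$ has lower density at least $c'_{\mathfrak{R}}$ for $\measure$-a.e.\ $T$. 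The main obstacle is precisely this last step: distinct $n\in\mathcal N(T)$ can yield the same scale $i(n+k_0)$, so positive density in $n$ does not automatically give positive density in $i$. The cleanest remedy is to choose $G$ so that consecutive Zorich visits cause a definite but bounded jump in $i$, ensuring each visit contributes a new value of $i$ boundedly far from the previous one; alternatively, Oseledets gives that ``typical'' epochs $\{n:i(n)=i\}$ have bounded average length, letting one absorb repeated visits into a multiplicative constant depending only on $\mathfrak{R}$.
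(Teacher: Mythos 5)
Your overall strategy (manufacture balanced times by a fixed positive Rauzy word, use ergodicity of renormalization to see this happen often, then transfer to density in the scale $i$) is a reasonable sketch, but it has two genuine gaps, one of which you yourself flag and do not close. First, the intermediate claim that $\mathcal{N}(T)=\{n: R^n(T)\in G\}$ has positive lower density \emph{in Rauzy--Veech time} is not correct as stated: Rauzy--Veech induction preserves an infinite absolutely continuous measure, and your set $G$ is contained in the cylinder $M(w)_{\Delta}$ of a positive matrix, hence compactly contained in the open simplex and of finite invariant measure; by the Hopf ratio ergodic theorem the visit frequency to such a set in Rauzy time is zero almost surely. What Birkhoff plus Kac actually give is positive density of visits in \emph{Zorich} time, and the number of Rauzy steps per Zorich step is not integrable, so no positive Rauzy-time density can be extracted this way. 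This by itself could be repaired by working entirely in Zorich time, but then the second and decisive gap remains: positive density of visit times does not yield positive lower density of the scales $i(m)=\lfloor \log_2 |C_{\max}(M(T,n_m))|\rfloor$ realized at those visits. Between consecutive visits to a fixed positive-measure set the cocycle growth is unbounded, and a priori the visits could be correlated with epochs of slow growth, so that all visits pile up on a sparse set of scales; your two suggested remedies (forcing bounded jumps between consecutive visits, or ``bounded average epoch length'' from Oseledets) do not rule this out, because neither controls the joint distribution of visit times and growth. Closing this requires a conditional, scale-by-scale estimate, and that is exactly what is missing from the proposal.

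This is precisely where the paper takes a different route: it never argues through time densities at all. Kerckhoff's Corollary 1.7 gives a uniform constant $p>0$ such that, conditioned on the entire past matrix $M'=M(S,n)$, with probability at least $p$ one reaches a $\nu_0$-balanced matrix whose largest column sum has grown by at most the fixed factor $K^d$; since this estimate is phrased directly in terms of multiplicative (i.e.\ dyadic scale) windows of length $\lceil d\log_2 K\rceil$, a comparison with i.i.d.\ Bernoulli trials and the strong law immediately yields lower density $\geq p/\lceil d\log_2 K\rceil$ for the set of scales $i$ carrying a balanced time (Corollary \ref{bal den}), and Kerckhoff's distortion estimate (Proposition \ref{bal}) then shows that a proportion at least $\nu_0^{-d}\measure(U_e)$ of those scales also land in the separation set $U_e$ (Corollary \ref{often hit}), giving Proposition \ref{step 2}. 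If you want to salvage your approach, you would need to replace your soft ergodicity input with some such uniform conditional estimate (Kerckhoff's, or a martingale/mixing statement for the Zorich cocycle strong enough to decouple visits from growth); with only Birkhoff's theorem for the Zorich map, the passage from ``many visit times'' to ``many good scales $i$'' does not follow.
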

To prove this proposition we first establish other results. Kerckhoff proved independence type results for Rauzy-Veech induction that we provide a slight reformulation of \cite[Corollary 1.7]{ker}.
\begin{prop} (Kerckhoff) Let $\mathfrak{R}$ be one of the Rauzy classes of permutations of $d$-IETs. There exist $p>0,K>1$ and $\nu_0>1$ depending only on $\mathfrak{R}$ such that for any matrix of Rauzy-Veech induction $M'=M(S,n)$ we have
\begin{multline}
\lefteqn{\measure(\{T: \pi(T)=\pi(S), T \in M'_{\Delta} \exists m>n \text{ such that } M(T,m) \text{ is }}\\ \nu_0 \text{-balanced and }  |C_{max} (M(T,m))| <K^d|C_{max}(M')|\})> p\measure ( M'_{\Delta})\end{multline}
\end{prop}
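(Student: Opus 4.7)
The plan is to recognize the proposition as a reformulation of Kerckhoff's independence result \cite[Corollary 1.7]{ker} and supply the translation. The substantive content---that along a positive-density set of future stopping times the Rauzy-Veech matrix becomes uniformly positive and balanced, with probability uniformly bounded away from zero in the starting cylinder---is due to Kerckhoff. What remains is to assemble the statement in the form given here.

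The algebraic core is the factorization $M(T,m) = M(T,n)\cdot M(R^n(T), m-n) = M' \cdot M(R^n(T),m-n)$. Kerckhoff's result produces (in the language of this paper) finitely many distinguished positive, $\nu_1$-balanced matrices $P_1,\ldots,P_N$ of Rauzy-Veech induction, each with entries in $[1/C,C]$ for some $C = C(\mathfrak{R})$, such that for \emph{every} Rauzy-Veech cylinder $M'_\Delta$ at least a fraction $p$ (independent of $M'$) of the IETs $T \in M'_\Delta$ admit an iterate $R^n(T)$ whose next Rauzy-Veech block equals one of the $P_j$. For such $T$, taking $m = n + (\text{length of the } P_j \text{ block})$, one computes
\[ |C_k(M'P_j)| \;=\; \sum_i (P_j)_{ik}\,|C_i(M')|, \]
which lies in $[\tfrac{1}{C}|C_{max}(M')|,\,dC\,|C_{max}(M')|]$ since $(P_j)_{ik}\in[1/C,C]$. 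Hence $M'P_j$ is $\nu_0$-balanced with $\nu_0 = dC^2$, and $|C_{max}(M'P_j)| \leq dC\,|C_{max}(M')|$. Absorbing $dC$ into $K^d$ (taking $K$ large in terms of $C$) yields the stated column bound.

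The main obstacle, already handled by Kerckhoff, is the uniformity of $p$ in $M'$. This reduces to bounded distortion of the map $L \mapsto M'L/|M'L|$ on $\mathring{\Delta}_d$, whose Jacobian is controlled by the column sums $|C_i(M')|$. Coupling this distortion estimate with recurrence of the Rauzy-Veech renormalization gives the claimed independence along stopping times. I would therefore cite \cite[Corollary 1.7]{ker}, extract the finite list $\{P_j\}$ with the properties above, and finish with the algebraic calculation indicated. The only additional care needed concerns matching conventions with \cite{ker} about the side on which matrices act on length vectors and about normalization after each step.
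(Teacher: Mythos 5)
Your approach matches the paper's: the paper offers no proof of this proposition, presenting it only as a ``slight reformulation'' of Kerckhoff's independence result \cite[Corollary 1.7]{ker}, which is exactly what you propose, and your algebraic translation (right-multiplying $M'$ by a fixed positive Rauzy--Veech block $P_j$ with entries bounded between $1/C$ and $C$ forces $\nu_0$-balance with $\nu_0=dC^2$ and gives $|C_{max}(M'P_j)|\le dC\,|C_{max}(M')|<K^d|C_{max}(M')|$) is correct. The one caveat is that your aside attributing the uniformity of $p$ in $M'$ to bounded distortion of $L\mapsto M'L/|M'L|$ oversimplifies matters (that map has unbounded distortion when $M'$ is unbalanced, and Kerckhoff's argument is more delicate), but since you defer precisely that point to \cite{ker}, as the paper itself does, nothing essential is missing.
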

This proposition is useful because the constants are independent of $M'$.
\begin{cor} \label{bal den} For $\measure$-almost every IET $T$ the set
 $$\{i: \exists n \text{ such that }M(T,n) \text{ is } \nu_0 \text{-balanced and } |C_{max}(M(T,n))| \in [2^i,2^{i+1}]\}\subset\mathbb{N}$$
  has lower density at least $c_{\mathfrak{R}}>0$ where $c_{\mathfrak{R}}$ is a constant depending only on $\mathfrak{R}$ and $\nu_0$.
\end{cor}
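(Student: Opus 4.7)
The plan is to apply Kerckhoff's proposition iteratively along a geometric sequence of scales, then invoke a martingale strong law. Choose an integer $s:=\lceil 1+d\log_2 K\rceil$, so that $K^d\cdot 2^{j+1}\leq 2^{j+s}$ for every $j\geq 0$, and partition $\mathbb{N}$ into consecutive blocks $B_k:=[ks,(k+1)s)$. For each IET $T$, let $\tau_k(T)$ be the smallest $n$ with $|C_{max}(M(T,n))|\geq 2^{ks}$. Since a single Rauzy-Veech step replaces one column by a sum of two columns, column sums at most double per step, so $|C_{max}(M(T,\tau_k(T)))|\in[2^{ks},2^{ks+1})$.

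Apply the preceding Kerckhoff proposition to the cylinder $M(T,\tau_k(T))_{\Delta}$: a conditional $\measure$-fraction at least $p$ of $T'$ in this cylinder admits some $m>\tau_k(T)$ for which $M(T',m)$ is $\nu_0$-balanced and
$$|C_{max}(M(T',m))|<K^d|C_{max}(M(T,\tau_k(T)))|\leq K^d\cdot 2^{ks+1}\leq 2^{(k+1)s}.$$
Combined with the monotonicity $|C_{max}(M(T',m))|\geq|C_{max}(M(T,\tau_k(T)))|\geq 2^{ks}$, this forces $|C_{max}(M(T',m))|\in[2^i,2^{i+1}]$ for some $i\in B_k$. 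Let $E_k$ denote this event; because $|C_{max}|$ is non-decreasing in $n$, any witness $m$ satisfies $m<\tau_{k+1}(T)$, so $E_k$ is determined by the Rauzy-Veech history up through step $\tau_{k+1}(T)$.

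Let $\mathcal{G}_k$ be the $\sigma$-algebra generated by the Rauzy-Veech history up through step $\tau_k$; since $\tau_{k+1}\geq\tau_k$ and the longer history determines the shorter one, $\mathcal{G}_k\subseteq\mathcal{G}_{k+1}$. Because Kerckhoff's constants $p,K,\nu_0$ depend only on $\mathfrak{R}$ and not on the cylinder, the construction above yields $\measure(E_k\mid\mathcal{G}_k)\geq p$ $\measure$-a.s. Set $Y_k:=\mathbf{1}_{E_k}-\measure(E_k\mid\mathcal{G}_k)$; then $(Y_k)_{k\geq 1}$ is a bounded martingale difference sequence with respect to $(\mathcal{G}_{k+1})_{k\geq 1}$. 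The strong law of large numbers for bounded martingale differences gives $N^{-1}\sum_{k=1}^N Y_k\to 0$ $\measure$-a.s., so
$$\liminf_{N\to\infty}\frac{1}{N}\sum_{k=1}^N\mathbf{1}_{E_k}\geq p\quad\measure\text{-a.s.}$$
Each occurrence of $E_k$ contributes at least one index $i\in B_k$ to the set named in the corollary, so that set has lower density at least $p/s$, and one takes $c_{\mathfrak{R}}:=p/s$.

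The main obstacle is the bookkeeping around the stopping times $\tau_k$: one must verify that each $E_k$ is $\mathcal{G}_{k+1}$-measurable and that the uniform Kerckhoff bound indeed yields $\measure(E_k\mid\mathcal{G}_k)\geq p$ as a genuine almost-sure statement on the nested filtration. Once the filtration is organized correctly the standard $L^2$ SLLN for bounded martingale differences closes the argument, and no analytic input beyond Kerckhoff's proposition is needed.
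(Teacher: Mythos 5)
Your proof is correct and follows essentially the same strategy as the paper: partition the dyadic scales into blocks of length about $d\log_2 K$, use Kerckhoff's uniform bound to get conditional probability at least $p$ of hitting each block given the Rauzy--Veech past, and conclude by a law of large numbers. The only cosmetic difference is that you formalize the comparison via stopping times and a bounded martingale-difference SLLN, whereas the paper compares directly with an i.i.d.\ Bernoulli$(p)$ sequence; the resulting constant $p/\lceil 1+d\log_2 K\rceil$ versus $p/\lceil d\log_2 K\rceil$ is immaterial.
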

\begin{proof} Consider the independent $\mu$ distributed random variables $F_1,F_2,...$ where $\mu$ takes value 1 with probability $p$ and 0 with probability $1-p$ and $F_i:\Omega \to \{0,1\}$. Recall that one puts a probability measure $\mu^{\mathbb{N}}$ on $\Omega$ such that for any $k\leq n$ and $a_1,...,a_n \in \{0,1\}$ where $k$ of the $a_i$ are 1 we have $$\mu^{\mathbb{N}}(\{t \in \Omega: F_i=a_i \text{ for all } i \leq n\})=p^k(1-p)^{n-k}.$$
By the strong law of large numbers, for $\mu^{\mathbb{N}}$-almost every $t \in \Omega$ we have $\underset{n \to \infty} \lim \frac{\underset{i=1}{\overset{n}{\sum}}F_i(t)}{n}=p$. Let $$G(T) =\{i: \exists n \text{ such that }M(T,n) \text{ is } \nu_0 \text{-balanced and } |C_{max}(M(T,n))| \in [2^i,2^{i+1}] \}.$$ By the previous proposition, given $G(S) \cap [0,N]$ the conditional probability that $N + i  \in G(S)$ for some $0<i \leq \lceil d \log_2(K) \rceil $ is at least $p$. Thus for any natural numbers $n_1,n_2,...,n_k$
\begin{multline}\lefteqn{\measure(\{S: [n_i\lceil d \log_2(K) \rceil, (n_i+1)\lceil d \log_2(K) \rceil] \cap G(S) \neq \emptyset \, \forall i\leq k \})} \\ \geq \mu^{\mathbb{N}}(\{t: F_{n_i}(t)=1 \, \forall i \leq k\}).
\end{multline}
This implies that from \measure  -almost every $T$, $G(T)$ has lower density at least $\frac{p}{ \lceil d \log_2(K) \rceil}$.
\end{proof}
This proposition is useful because of the next result.
\begin{prop} \label{bal} (Kerckhoff \cite[Corollary 1.2]{ker}) If $M$ is $\nu_0$-balanced and $W \subset \Delta_d$ is a measurable set, then $$\frac{\measure(W)}{\measure(\Delta_d)}< \frac{\measure(MW)}{\measure(M\Delta_{d})}(\nu_0)^{-d}.$$
\end{prop}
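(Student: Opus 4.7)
The statement is a distortion estimate for the projective action $\phi_M \colon \Delta_d \to \Delta_d$ given by $\phi_M(L) = ML/|ML|$, since by construction $MW$ abbreviates $\phi_M(W)$. The plan is to compute the Jacobian of $\phi_M$ explicitly and then use $\nu_0$-balance to bound it above and below uniformly on $\Delta_d$.

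Each elementary Rauzy--Veech matrix has determinant $\pm 1$ (a direct check from the two cases of $M(T,1)$ recorded above), so $|\det M|=1$ and the linear map $L \mapsto ML$ on $\mathbb{R}^d$ preserves Lebesgue measure. Identifying a measurable $W \subseteq \Delta_d$ with its cone $C(W)=\{tL : L \in W,\, 0 \leq t \leq 1\}$, whose $d$-dimensional volume is proportional to $\measure(W)$, a change of variables gives
$$\measure\bigl(\phi_M(W)\bigr) \;=\; \int_W \frac{1}{|ML|^d}\, d\measure(L).$$
Moreover, for $L \in \Delta_d$ we have $|L|=1$, so
$$|ML| \;=\; \sum_i (ML)_i \;=\; \sum_j L_j\, |C_j(M)|,$$
which is a convex combination of the column sums of $M$. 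Hence $\min_j |C_j(M)| \leq |ML| \leq \max_j |C_j(M)|$, and by the $\nu_0$-balance hypothesis the ratio of these extremes is at most $\nu_0$.

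Plugging the bounds on $|ML|$ into the Jacobian formula yields, for any measurable $W \subseteq \Delta_d$,
$$\frac{\measure(W)}{\max_j |C_j(M)|^d}\;\leq\; \measure(MW) \;\leq\; \frac{\measure(W)}{\min_j |C_j(M)|^d}.$$
Applying the same estimates to both $W$ and $\Delta_d$ and dividing gives
$$\frac{\measure(MW)}{\measure(M\Delta_d)}\;\geq\; \left(\frac{\min_j |C_j(M)|}{\max_j |C_j(M)|}\right)^{\! d} \frac{\measure(W)}{\measure(\Delta_d)}\;\geq\; \nu_0^{-d}\, \frac{\measure(W)}{\measure(\Delta_d)},$$
which on rearrangement is exactly the proposition.

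The only real subtlety is bookkeeping: fixing consistent normalizations of Lebesgue measure on the cone and on the simplex so that the Jacobian comes out cleanly as $|ML|^{-d}$ with no stray dimensional constants. Once that is pinned down, Steps~2 and~3 are immediate linear algebra, and I do not anticipate a serious obstacle; the force of the proposition comes entirely from the $\nu_0$-balance hypothesis controlling the oscillation of $|ML|$ over the simplex.
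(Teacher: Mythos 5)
Your proof is correct, and there is nothing in the paper to compare it against: Proposition \ref{bal} is quoted from Kerckhoff \cite[Corollary 1.2]{ker} and not proved in the paper. Your argument is the standard one for this distortion estimate: identify $\measure$ on the simplex with cone volume, use that the linear map scales $d$-dimensional volume by $|\det M|$ to get the Jacobian $|\det M|/|ML|^{d}$ for $L\mapsto ML/|ML|$, note $|ML|=\sum_j L_j|C_j(M)|$ is a convex combination of column sums, and bound the Jacobian's oscillation by $\nu_0^{d}$ using balance. The normalization worry you raise is harmless, since every dimensional constant (and in fact $|\det M|$ itself) appears in both $\measure(MW)$ and $\measure(M\Delta_d)$ and cancels from the ratio; in particular the reduction to determinant $\pm1$ Rauzy--Veech matrices is unnecessary, and the estimate holds for any invertible nonnegative $\nu_0$-balanced matrix.

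One point of bookkeeping: what you prove is $\nu_0^{-d}\,\measure(W)/\measure(\Delta_d)\le \measure(MW)/\measure(M\Delta_d)$, equivalently $\measure(W)/\measure(\Delta_d)\le \nu_0^{d}\,\measure(MW)/\measure(M\Delta_d)$, which is not literally the displayed inequality, where the factor $\nu_0^{-d}$ sits on the right-hand side. But the display as printed cannot be right (take $W=\Delta_d$: it would assert $1<\nu_0^{-d}$), so the factor is simply misplaced there; your version is the intended statement and is exactly the form used later, e.g.\ in the proof of Corollary \ref{often hit}, where the conditional probability is bounded below by $\nu_0^{-d}\measure(U)$. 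So your argument establishes the proposition as it is actually needed; the only discrepancy is the paper's typo, not a gap in your proof.
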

This proposition is useful because informally what it says is that when $M(T,n)$ is balanced then the conditional probability of the next sequence of steps of Rauzy-Veech induction is proportional to the measure of IETs with that initial sequence of steps of Rauzy-Veech induction.
\begin{cor}\label{often hit} For any measurable $U \subset \Delta_{d-1}$ we have
$$\{i<N:\exists n \text{ with } |C_{\max}(M(T,n))|\in[2^i,2^{i+1}] \text{ and }\frac{L(R^nT)}{|L(R^n(T))|} \in U\}$$
 has density at least $c_{\mathfrak{R}}\nu_0^{-d}\measure(U)$
where $c_{\mathfrak{R}}$ and $\nu_0$ are as in Corollary \ref{bal den}.
\end{cor}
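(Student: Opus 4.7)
The plan is to refine the argument of Corollary \ref{bal den} by demanding, at each ``balanced window'', not only that a $\nu_0$-balanced matrix of Rauzy-Veech induction appears at the right scale, but also that the normalized length vector of the induced IET falls in $U$. The extra factor $\nu_0^{-d}\measure(U)$ will come from Proposition \ref{bal} applied at the balanced scale, after which the finish is the same strong-law-of-large-numbers step used to end Corollary \ref{bal den}.

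First I will decompose $\mathbb{N}$ into consecutive blocks $B_k$ of length $\lceil d\log_2 K\rceil$, as in Corollary \ref{bal den}. For each such block, Kerckhoff's proposition gives that, conditional on the Rauzy-Veech history up to the start of $B_k$, with probability at least $p$ there exist $i \in B_k$ and $n$ with $M(T,n)$ a $\nu_0$-balanced matrix satisfying $|C_{\max}(M(T,n))| \in [2^i,2^{i+1}]$. When this happens let $n_k = n_k(T)$ be the smallest such $n$ and set $M_k := M(T,n_k)$.

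Next I will control the extra constraint $L(R^{n_k}T)/|L(R^{n_k}T)| \in U$. By Lemma \ref{one step} iterated and Lemma \ref{region}, the cylinder set $(M_k)_\Delta$ is in bijection with $\Delta_d$ via the map $T \mapsto L(R^{n_k}T)/|L(R^{n_k}T)|$, and the preimage of $U$ under this bijection is $M_k U$. Hence, conditional on $M_k = M$ (a fixed $\nu_0$-balanced matrix), the conditional probability of the extra constraint equals $\measure(MU)/\measure(M_\Delta)$, and Proposition \ref{bal} gives
\begin{equation*}
\frac{\measure(MU)}{\measure(M_\Delta)} \;\geq\; \nu_0^{-d}\,\measure(U).
\end{equation*}
Multiplying, the ``enhanced'' block event $\widetilde{\Omega}_k$ -- that $B_k$ contains an $i$ for which both a $\nu_0$-balanced $M(T,n)$ of the correct scale exists and $L(R^nT)/|L(R^nT)| \in U$ -- has conditional probability at least $p\,\nu_0^{-d}\measure(U)$ given the Rauzy-Veech history up to the start of $B_k$.

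Finally I will dominate the indicators $\mathbf{1}_{\widetilde{\Omega}_k}$ from below by an i.i.d.\ Bernoulli$(p\,\nu_0^{-d}\measure(U))$ sequence and invoke the strong law of large numbers, exactly as at the end of the proof of Corollary \ref{bal den}. This shows that for $\measure$-a.e.\ $T$ a proportion at least $p\,\nu_0^{-d}\measure(U)$ of the blocks $B_k$ satisfy $\widetilde{\Omega}_k$; each such block contributes at least one element to the set in the statement, so after dividing by the block length $\lceil d\log_2 K\rceil$ one obtains lower density at least $c_{\mathfrak{R}}\,\nu_0^{-d}\measure(U)$. The main obstacle will be the conditional-probability bookkeeping at the random stopping time $n_k$: one has to justify that, conditional on $M_k = M$, the induced normalized length vector is distributed by pullback of $\measure|_{M_\Delta}$ to $\Delta_d$, which is precisely the setting in which Proposition \ref{bal} applies. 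Once this is spelled out from the definitions of Rauzy-Veech induction and of $\measure$, the rest of the argument is mechanical.
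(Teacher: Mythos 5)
Your proposal is correct and follows essentially the same route as the paper: Kerckhoff's proposition supplies the block-wise occurrence of $\nu_0$-balanced matrices at scale $[2^i,2^{i+1}]$, Proposition \ref{bal} gives the lower bound $\nu_0^{-d}\measure(U)$ on the conditional probability that the normalized length vector at such a balanced time lies in $U$, and a stochastic-domination/strong-law argument converts these conditional bounds into a lower density, exactly the ingredients the paper uses. The only difference is organizational — you merge both requirements into a single block event and apply the law of large numbers once, whereas the paper first extracts the positive-density set of balanced scales via Corollary \ref{bal den} and then repeats the ``proportional independence'' argument along that subsequence — and the stopping-time conditional-probability bookkeeping you flag at the end is precisely the point the paper itself treats tersely.
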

\begin{proof} It follows from Corollary \ref{bal den} that for almost every IET $T$ there exists $\{i_r\}_{r=1}^{\infty} \subset \mathbb{N}$ of lower density at least $c_{\mathfrak{R}}$ and $\{n_r\}_{r=1}^{\infty} \subset \mathbb{N}$ such that
$$M(T,n_r) \text{ is } \nu_0 \text{-balanced and } |C_{max}(M(T,n_r))| \in [2^{i_r},2^{i_r+1}].$$
It follows from the proportional independence provided by Proposition \ref{bal}
(analogously to the proof of  that Corollary \ref{bal den})
 that $\measure$-almost surely a set of $r$ of lower density at least $\nu_0^{-d}\measure(U)$ have $\frac{L(R^{n_r}T)}{|L(R^{n_r}(T))|} \in U$.
\end{proof}
\begin{proof}[Proof of Proposition \ref{step 2}] Let $U_{e}\subset \Delta_{\mathfrak{R}}$ be the set of IETs $S$
where the elements of $\{Sx,S^2x,...,S^{\lceil 20\nu^2d\rceil }x\}$
are $\frac {e}{20\nu^2d}$ separated for all $x \in [0,1)$.
 It is easy to see that for small enough $e$ we have $\measure(U_e)>0$. Notice that if
  $|C_{max}(M(T,n))|\in [2^i,2^{i+1}]$
 then $I^{(n)}(T)>\frac{1}{2^{i+1}}$. Therefore if $R^n(T)$ is $\nu$-balanced and $|C_{max}(M(T,n))|\in [2^i,2^{i+1}]$ and $\frac{L(R^n(T))}{|L(R^n(T))|} \in U_{e}$ then $T$ is $i$-good. The proposition now follows from Corollary \ref{often hit}.
\end{proof} 

We have established Theorem \ref{full measure thm}, but one can also establish the dual formulation. By similar arguments and Lemma \ref{separated 2} it follows that there exists a full measure set of IETs $\mathcal{V}$ such that for any $\bold{a}$ standard and $\{ia_i\}_{i=1}^{\infty}$ monotone, $T \in \mathcal{V}$ we have $\Leb \left(\LS T^{-i}(B(y,a_i))\right)=1$ for every $y$.

There are similar versions of Theorem \ref{full measure thm} and the preceding comment for almost every direction of almost every flat surface. This follows by Fubini's theorem and a parallel argument to the proof of Corollary \ref{kurz for flow}.
\section{concluding remarks}

We established that for any $\{a_i\}_{i=1}^{\infty}$ and flat surface $Q$ almost every direction satisfies that $B(y,a_i)$ is Borel-Cantelli for any $y$. Moreover, any $x$ is in $\LS T^{-i}(B(y,a_i))$ for almost every $y$. In \cite{tseng} it was shown that this can not be improved to be a statement about every pair $(x,y)$. In fact, for rotations ($Q$ the torus) the set of $y$ such the $\underset{i \to \infty} {\liminf} \, i|y-R_{\alpha}^i(x)|>0$ is a set of Hausdorff dimension 1 
 for any $x$ and $\alpha$.


Likewise, Theorem \ref{rigidthm} can not be improved to be a statement about every IET. There are many IETs that satisfy MSTP, in particular Pseudo-Anosov IETs. (Recall that an IET is Psuedo-Anosov if it is fixed up to rescaling by a power of Rauzy-Veech induction.) This follows from the fact that they are linearly recurrent and by modifying Kurzweil's proof that rotation by a badly approximable number satisfies MSTP. It also follows from \cite[Theorem 1]{BC}. A particular case of this is given by any minimal IET which has its lengths chosen over the same quadratic number field \cite{bosh-car}. For IETs MSTP also survives inducing on sub-intervals of $[0,1)$. 
This implies that the induced map of a rotation by a badly approximable number gives a 3-IET satisfying MSTP. Therefore, there are IETs that satisfy MSTP and have $\underset{n \to \infty}{\liminf}n \, e(n)=0$ (one can see this by inducing a rotation by a badly approximable number on a generic interval). For rotations this does not happen. 
\begin{ques} Fix $x$ and $T$.  Does the set $\{y: \underset{i \to \infty}{\liminf} \, i|T^ix-y|>0\}$ have Hausdorff dimension 1?
\end{ques}
\begin{ques} Does there exist a (not necessarily decreasing) sequence $\{a_i\}_{i=1}^{\infty}$ with divergent sum and a positive measure set of IETs $M$, such that for all $T \in M$, ${\Leb(\LS B(T^i(x),a_i))=0}$ for almost every $x$?
\end{ques}
Such a sequence does not exist for rotations. This fact follows from Kurzweil's proof of the first part of Theorem \ref{startthm}.
\begin{ques} Let $\{y_i\}_{i=1}^{\infty} $ be a sequence of points is $ [0,1)$ and $\{a_i\}_{i=1}^{\infty}$ be a sequence of positive real numbers with divergent sum. Is it true that for almost every IET $T$, we have $\Leb(\LS T^{-i}(B(y_i,a_i)))=1$?
\end{ques}
This is true for rotations. This fact follows from Kurzweil's proof of the first part of Theorem \ref{startthm}.
\section{Acknowledgments}
I would like to thank my advisor, M. Boshernitzan, for many helpful discussions. Much of this was inspired and enabled by our joint work in \cite{BC} and \cite{BCIET}. I would like to thank J. Athreya, D. Kleinbock, L. Marchese, C. Ulcigrai, and W. Veech for helpful discussions. I would like to thank the organizers of Dynamique dans l'espace de Teichmueller in Roscoff France in June 2008. I would like to thank the referees for many suggestions that greatly improved the paper. I was supported in part by Rice University's Vigre grant DMS-0739420 and a Tracy Thomas award.

\end{document}